\newtheorem{theorem}{Theorem}
\newtheorem{corollary}[theorem]{Corollary}
\newtheorem{lemma}[theorem]{Lemma}
\newtheorem{conjecture}[theorem]{Conjecture}
\newtheorem{proposition}[theorem]{Proposition}
\newtheorem{remark}[theorem]{Remark}
\newtheorem{definition}[theorem]{Definition}
\numberwithin{theorem}{section}
\numberwithin{conjecture}{section}
\numberwithin{corollary}{section}
\numberwithin{lemma}{section}
\numberwithin{proposition}{section}
\numberwithin{remark}{section}
\numberwithin{definition}{section}
\numberwithin{figure}{section}
\numberwithin{equation}{section}
\begin{document}

\title{On the Uniqueness of Global Multiple SLEs}


\vspace{5cm}

\begin{center}
\LARGE \bf On the Uniqueness of Global Multiple SLEs
\end{center}

\vspace{0.25cm}

\begin{center}
{\bf Vincent Beffara}\\
{\footnotesize{\texttt{vincent.beffara@univ-grenoble-alpes.fr}}}\\
{\small{Universit\'{e} de Grenoble Alpes, CNRS, Institut Fourier, Grenoble, France}}

\bigskip

{\bf Eveliina Peltola}\\
{\footnotesize{\texttt{eveliina.peltola@hcm.uni-bonn.de}}}\\
{\small{Institute for Applied Mathematics, University of Bonn, Germany}}

\bigskip

{\bf Hao Wu} \\
{\footnotesize{\texttt{hao.wu.proba@gmail.com}}}\\
{\small{Yau Mathematical Sciences Center, Tsinghua University, China}}
\end{center}

\vspace{0.75cm}

\begin{center}
\begin{minipage}{0.95\textwidth}
\abstract{
    This  article  focuses on  the  characterization  of
    global multiple  Schramm-Loewner evolutions  (SLE). The  chordal SLE
    describes the scaling limit of a single interface in various
    critical  lattice models  with  Dobrushin  boundary conditions,  and
    similarly,  global   multiple  SLEs   describe  scaling   limits  of
    collections   of  interfaces   in  critical   lattice  models   with
    alternating boundary conditions. In this  article, we give a minimal
    amount of characterizing properties for the global multiple SLEs: we
    prove that there exists a  unique probability measure on collections
    of  pairwise  disjoint  continuous  simple  curves  with  a  certain
    conditional  law   property.  As   a  consequence,  we   obtain  the
    convergence  of  multiple  interfaces  in  the  critical  Ising,
    FK-Ising, and percolation models.
}
\end{minipage}
\end{center}


\tableofcontents

\newpage

\global\long\def\ud{\mathrm{d}}
\global\long\def\der#1{\frac{\ud}{\ud#1}}
\global\long\def\pder#1{\frac{\partial}{\partial#1}}
\global\long\def\pdder#1{\frac{\partial^{2}}{\partial#1^{2}}}

\global\long\def\PartF{\mathcal{Z}}
\global\long\def\CobloF{\mathcal{U}}
\global\long\def\chamber{\mathfrak{X}}

\global\long\def\Catalan{\mathrm{C}}
\global\long\def\LP{\mathrm{LP}}
\global\long\def\DP{\mathrm{DP}}
\global\long\def\DPleq{\preceq} 
\global\long\def\DPgeq{\succeq} 
\newcommand{\wedgeat}[1]{\lozenge_#1} 
\newcommand{\upwedgeat}[1]{\wedge^#1}
\newcommand{\downwedgeat}[1]{\vee_#1}
\newcommand{\slopeat}[1]{\times_#1}
\newcommand{\removewedge}[1]{\setminus \wedgeat{#1}}
\newcommand{\removeupwedge}[1]{\setminus \upwedgeat{#1}}
\newcommand{\removedownwedge}[1]{\setminus \downwedgeat{#1}}
\newcommand{\wedgelift}[1]{\uparrow \wedgeat{#1}} 
\global\long\def\Mmat{\mathcal{M}}
\global\long\def\Minv{\mathcal{M}^{-1}}
\global\long\def\link#1#2{\{#1,#2\}}
\global\long\def\removeLink{/}
\global\long\def\nested{\boldsymbol{\underline{\Cap}}}
\global\long\def\unnested{\boldsymbol{\underline{\cap\cap}}}

\global\long\def\Rpos{\R_{> 0}}
\global\long\def\Znn{\Z_{\geq 0}}
\global\long\def\im#1{\operatorname{Im}(#1)}

\global\long\def\localSLE{\mathsf{P}}

\global\long\def\FKdual{\mathcal{L}}

\global\long\def\graph{\mathcal{G}}

\newcommand{\eps}{\varepsilon}
\newcommand{\ov}{\overline}
\newcommand{\U}{\mathbb{U}}
\newcommand{\T}{\mathbb{T}}
\newcommand{\HH}{\mathbb{H}}
\newcommand{\LA}{\mathcal{A}}
\newcommand{\LB}{\mathcal{B}}
\newcommand{\LC}{\mathcal{C}}
\newcommand{\LD}{\mathcal{D}}
\newcommand{\LF}{\mathcal{F}}
\newcommand{\LK}{\mathcal{K}}
\newcommand{\LE}{\mathcal{E}}
\newcommand{\LG}{\mathcal{G}}
\newcommand{\PL}{\mathcal{L}}
\newcommand{\LM}{\mathcal{M}}
\newcommand{\LN}{\mathcal{N}}
\newcommand{\LQ}{\mathcal{Q}}
\newcommand{\PE}{\mathcal{P}}
\newcommand{\LT}{\mathcal{T}}
\newcommand{\LS}{\mathcal{S}}
\newcommand{\LU}{\mathcal{U}}
\newcommand{\LV}{\mathcal{V}}
\newcommand{\LH}{\mathcal{H}}
\newcommand{\LX}{\mathcal{X}}
\newcommand{\Lx}{\mathbb{x}}
\newcommand{\Ly}{\mathbb{y}}
\newcommand{\R}{\mathbb{R}}
\newcommand{\C}{\mathbb{C}}
\newcommand{\N}{\mathbb{N}}
\newcommand{\Z}{\mathbb{Z}}
\newcommand{\E}{\mathbb{E}}
\newcommand{\PP}{\mathbb{P}}
\newcommand{\QQ}{\mathbb{Q}}
\newcommand{\A}{\mathbb{A}}
\newcommand{\D}{\mathbb{D}}

\newcommand{\one}{\mathbb{1}}
\newcommand{\bn}{\mathbf{n}}
\newcommand{\cond}{\,|\,}
\newcommand{\la}{\langle}
\newcommand{\ra}{\rangle}
\newcommand{\tree}{\Upsilon}
\newcommand{\SLE}{\textnormal{SLE}}
\newcommand{\simple}{\textnormal{simple}}
\newcommand{\dist}{\textnormal{dist}}
\newcommand{\CLE}{\textnormal{CLE}}
\newcommand{\GFF}{\textnormal{GFF}}
\newcommand{\free}{\textnormal{free}}
\newcommand{\dimH}{\textnormal{dim}}

\global\long\def\edge#1#2{\langle #1,#2 \rangle}

\newcommand{\conn}{\vartheta}
\global\long\def\ed{d}
\global\long\def\exponent{\Delta}
\global\long\def\bigrad{R}
\global\long\def\smallrad{r}
\global\long\def\sixarm#1#2#3{\LE^{n}(#1; #3, #2)}
\global\long\def\threearm#1#2#3{\LE_{\textnormal{b}}^{n}(#1; #3, #2)}

\global\long\def\path{\zeta}
\global\long\def\metric{\mathrm{dist}}

\section{Introduction}
At the  turn of the  millennium, O.~Schramm introduced  random fractal
curves in the plane which  he called ``stochastic Loewner evolutions''
($\SLE$)~\cite{SchrammScalinglimitsLERWUST,
  RohdeSchrammSLEBasicProperty}, and which have  since then been known
as  Schramm-Loewner  evolutions.  He  proved  that  these  probability
measures on  curves are the unique  ones that enjoy the  following two
properties: their law is conformally  invariant and, viewed as growth
processes (via Loewner's theory), they have the domain Markov property
--- a memorylessness  property of the growing  curve. These properties
are natural from the physics point of view, and in many cases, it has been
verified that interfaces in critical planar lattice models of statistical physics converge in  the scaling  limit to
$\SLE$  type curves;
see~\cite{SmirnovPercolationConformalInvariance, LawlerSchrammWernerLERWUST, SmirnovConformalInvariance, CamiaNewmanPercolation, SchrammSheffieldDiscreteGFF, CDCHKSConvergenceIsingSLE} for examples.

In the chordal  case, there is a  one-parameter family $(\SLE_\kappa)$
of such  curves, parameterized by a  real number $\kappa \geq  0$, which is
believed  to be  related to  universality classes  of the critical  models,
as well as to the central charges of the corresponding conformal  field theories. In this  article, we consider
several interacting  $\SLE_\kappa$ curves, multiple $\SLE$s.
We prove in Section~\ref{sec::global}
that,  when  $\kappa  \in  (0,4]$,   there  exists  a  unique  multiple
$\SLE_\kappa$ measure on families of  curves with a given connectivity
pattern, as detailed in Theorem~\ref{thm::global_unique} below.
Such measures have already been considered in many
works~\cite{KytolaMultipleSLE,    DubedatCommutationSLE,    GrahamSLE,
  KozdronLawlerMultipleSLEs, LawlerPartitionFunctionsSLE}, but we have
not  found a  conceptual approach  in the  literature, in terms of a minimal amount
of  characterizing  properties  in  the  spirit  of  Schramm's classification.

The results on convergence of a single  discrete interface to an $\SLE_\kappa$ curve
in the scaling limit are all rather involved. On the other hand, after
the  characterization of  the  multiple  $\SLE$s, it  is
relatively  straightforward to  extend  these  convergence results  to
prove  that   multiple  interfaces  also  converge   to   multiple
$\SLE_\kappa$ curves. Indeed, the relative compactness of the interfaces in a
suitable topology  can be  verified with  little effort, e.g.,  using results
in~\cite{DuminilSidoraviciusTassionContinuityPhaseTransition, KemppainenSmirnovRandomCurves},
and the  main problem  is then  to  identify  the limit  (i.e.,  to  prove that  the
subsequential  limits  are given  by  a  unique collection  of  random
curves).

As an application, we show that  the chordal  interfaces in  the
critical Ising model with  alternating boundary conditions converge to
the multiple $\SLE_\kappa$ with parameter $\kappa = 3$, in the sense detailed in
Proposition~\ref{prop::ising_multiple_cvg} and Section~\ref{subsec::ising_multiple_cvg}.
In contrast to the previous work~\cite{IzyurovIsingMultiplyConnectedDomains}
of K.~Izyurov,  we work with
the global collection of curves  and condition on the event that the interfaces form
a given connectivity pattern ---  see Figure~\ref{fig::Ising} for an
illustration. We also  identify the marginal law of one  curve in the
scaling  limit as a weighted chordal $\SLE_3$.
For the identification of the scaling limit, we use
the known convergence of a single critical Ising       interface       to        the       chordal       $\SLE_3$
\cite{CDCHKSConvergenceIsingSLE} combined with our characterization of the multiple  $\SLE_3$,
and certain technical estimates to rule out pathological behavior of the curves.

The  explicit  construction  of   global  multiple  $\SLE_\kappa$  given
in~\cite{KozdronLawlerMultipleSLEs,       LawlerPartitionFunctionsSLE,
  PeltolaWuGlobalMultipleSLEs},                 and        summarized         in
Section~\ref{sec::global}  of  the  present  article, 
fails    for   $\kappa    >   4$.    Nevertheless,   we    discuss   in
Section~\ref{sec::ising_fkperco} how, using  information from discrete
models, one could extend the classification of the multiple $\SLE_\kappa$ 
(our Theorem~\ref{thm::global_unique}) to the  range $\kappa \in (4,6]$.
More precisely, we show that the convergence of a single interface in
the critical  random-cluster model combined with
a Russo-Seymour-Welsh type (RSW)  estimate 
implies the  existence and  uniqueness of a global multiple $\SLE_\kappa$,
where $\kappa \in (4,6]$ is related to the cluster weight
$q$ 
via Equation~\eqref{eqn::relation_q_kappa}. In the  special case of the
FK-Ising model  ($q=2$), thanks to the  results of~\cite{SmirnovConformalInvarianceAnnals, ChelkakSmirnovIsing,
  CDCHKSConvergenceIsingSLE, DuminilSidoraviciusTassionContinuityPhaseTransition, KemppainenSmirnovRandomCurves}, we do  obtain the
convergence of any number of chordal interfaces to the unique multiple
$\SLE_{16/3}$ --- see Proposition~\ref{prop::fkising_alternating_cvg}.
For general  $\kappa \in (4,6)$,  this result
remains conditional on 
the  convergence  of  a  single  interface. 
The  case  $\kappa  =  6$ corresponds  to   critical  percolation,  where  the   convergence  
also follows by~\cite{SmirnovPercolationConformalInvariance, CamiaNewmanPercolation}.

\subsection{Global Multiple $\SLE$s}
\label{subsec::intro_globalmultiple}

Throughout, we  let $\Omega \subset  \C$ denote a simply  connected domain
with $2N$  distinct  points  $x_1,  \ldots,  x_{2N}  \in  \partial  \Omega$
appearing in  counterclockwise order along  the boundary on locally connected boundary segments.
We  call the $(2N+1)$-tuple $(\Omega; x_1, \ldots, x_{2N})$ a \textit{(topological) polygon}.
We consider curves in  $\Omega$ each of which connects two
points among $\{x_1, \ldots, x_{2N}\}$.  These curves can have various
planar   (i.e.,   non-crossing)   connectivities.  We   describe   the
connectivities         by        planar         pair        partitions
$\alpha  =  \{  \link{a_1}{b_1}, \ldots,  \link{a_N}{b_N}  \}$,  where
$\{a_1, b_1,\ldots,  a_N, b_N  \} =  \{1,2,\ldots,2N\}$. We  call such
$\alpha$ \textit{(planar) link patterns} and denote the set of them
by   $\LP_N$, for each $N \geq 1$.  Given   a   link  pattern   $\alpha   \in  \LP_N$   and
$\link{a}{b}       \in        \alpha$,       we        denote       by
$\alpha  \removeLink  \link{a}{b}$  the link  pattern  in  $\LP_{N-1}$
obtained by  removing $\link{a}{b}$ from $\alpha$  and then relabeling
the remaining indices so that they are the first $2(N-1)$ positive integers.

We let $X_{\simple}(\Omega; x_1,x_2)$ denote  the set of continuous simple
unparameterized  curves in  $\Omega$ connecting  $x_1$ and  $x_2$ such
that they only touch the  boundary $\partial \Omega$ in $\{x_1,x_2\}$.
When $\kappa  \in (0,4]$, the  chordal $\SLE_\kappa$ curve  belongs to
this  space   almost  surely.
Also,        when        $N        \geq       2$,        we        let
$X_{\simple}^{\alpha}(\Omega;  x_1, \ldots,  x_{2N})$  denote  the set  of
families $(\eta_1, \ldots, \eta_N)$ of pairwise disjoint curves, where
$\eta_j\in    X_{\simple}(\Omega;   x_{a_j},    x_{b_j})$   for    all
$j\in\{1,\ldots,N\}$.

\begin{definition}\label{def::global_multiple_sle}
  Let $\kappa \in (0,4]$.
  For $N\ge 2$ and for any link  pattern $\alpha \in \LP_N$, we call a
  probability         measure          on         the         families
  $(\eta_1,   \ldots,  \eta_N)\in   X_{\simple}^{\alpha}(\Omega;  x_1,
    \ldots,  x_{2N})$ \emph{a~global} $N$-$\SLE_{\kappa}$ \emph{associated  to
    $\alpha$} if, for each $j\in\{1, \ldots, N\}$, the conditional law
  of           the            curve           $\eta_j$           given 
  $\{\eta_1, \ldots, \eta_N\} \setminus \{\eta_j\}$
  is the
  chordal $\SLE_{\kappa}$  connecting $x_{a_j}$  and $x_{b_j}$  in the connected
  component of the domain 
  $\Omega  \setminus   \bigcup_{i  \neq  j}  \eta_i$   containing  the
  endpoints $x_{a_j}$ and $x_{b_j}$ of $\eta_j$ on its boundary.
\end{definition}

\begin{theorem}
  \label{thm::global_unique}
  Let $\kappa\in (0,4]$  and let $(\Omega; x_1, \ldots,  x_{2N})$ be a
  polygon with $N \geq 1$. For  any $\alpha \in \LP_N$, there exists a
  unique global $N$-$\SLE_{\kappa}$ associated to~$\alpha$.
\end{theorem}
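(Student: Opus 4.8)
The plan is to dispatch the case $N=1$ and the existence statement quickly, and to spend the real effort on uniqueness for $N\ge 2$. For $N=1$ the conditional-law property of Definition~\ref{def::global_multiple_sle} says exactly that $\eta_1$ is the chordal $\SLE_\kappa$ from $x_1$ to $x_2$ in $\Omega$, so existence and uniqueness are immediate. For $N\ge 2$ I would take existence from the explicit construction in Section~\ref{subsec::globalsles_existence} and reduce the theorem to uniqueness.

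For uniqueness, the first step is to reinterpret the defining property as a stationarity condition. I define a resampling Markov chain on $X^\alpha_{\simple}(\Omega;x_1,\dots,x_{2N})$ whose step picks an index $j$ and replaces $\eta_j$ by a fresh chordal $\SLE_\kappa$ in the component of $\Omega\setminus\bigcup_{i\ne j}\eta_i$ carrying $x_{a_j}$ and $x_{b_j}$. By definition, every global $N$-$\SLE_\kappa$ associated to $\alpha$ is invariant under this chain. Hence, if $\PP$ and $\QQ$ are two such measures, it suffices to build a coupling of the two chains, started from $\PP$ and from $\QQ$, that forces the two configurations to coincide in the limit.

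The engine of the coupling is the domain monotonicity of chordal $\SLE_\kappa$, available precisely because $\kappa\le 4$, where the curves are simple and mutually disjoint: if $D\subseteq\widetilde D$ are two simply connected domains sharing the two marked endpoints and agreeing near them, the chordal $\SLE_\kappa$'s in $D$ and in $\widetilde D$ can be coupled so that one lies on a prescribed side of the other. This yields a partial order on $\alpha$-configurations (the curves ordered side-by-side according to $\alpha$) for which the resampling step, fed with common randomness, is monotone. I would then sandwich: introduce the maximal and minimal configurations obtained by pushing the curves toward the boundary (approximated by genuine elements of $X^\alpha_{\simple}$ when these limits are degenerate), run the monotonically coupled dynamics from both, and observe that any $\PP$- or $\QQ$-sample stays squeezed between them at all times.

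The heart of the argument, and the step I expect to be the main obstacle, is to show that the coupled maximal and minimal dynamics coalesce. The mechanism is that two chordal $\SLE_\kappa$'s in nested domains $D\subseteq\widetilde D$ agreeing near the endpoints can be coupled to be \emph{identical} with uniformly positive probability: on the event that the curve in $\widetilde D$ stays inside $D$, away from where the boundaries differ, the two laws are mutually absolutely continuous, so the curves can be made equal there. Performing one full sweep over $j=1,\dots,N$ and forcing equality curve by curve, each step succeeds with probability bounded below uniformly in the current configurations; establishing this uniform lower bound, namely ruling out pinching of the relevant components and carrying out the coupling construction measurably, is the delicate point and should exploit the fixed geometry of $(\Omega;x_1,\dots,x_{2N})$ together with the regularity of $\SLE_\kappa$ for $\kappa\le 4$. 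Once it is in place, the extremal configurations coalesce after finitely many sweeps almost surely, the total variation distance between the two stationary chains tends to zero, and stationarity forces $\PP=\QQ$.
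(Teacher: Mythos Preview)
Your resampling Markov chain and the recognition that everything hinges on a uniform-in-configuration lower bound for the probability of coupling two resampled curves to coincide are exactly right, and these form the core of the paper's argument. But your route deviates from the paper's in two ways, and one of them is a gap.

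The paper does not attack the full $N$-curve chain directly. It inducts on $N$: assuming uniqueness for $(N-1)$, one picks two links $\{1,2\}$ and $\{k,k+1\}$ in $\alpha$, calls the corresponding curves $\eta^L,\eta^R$, and uses the induction hypothesis to identify both the conditional law of the remaining $N-2$ curves given $(\eta^L,\eta^R)$ and the conditional law of $\eta^L$ given $\eta^R$ (and vice versa). This reduces everything to uniqueness of the joint law of the pair $(\eta^L,\eta^R)$, to which the two-curve Markov chain argument of Proposition~\ref{prop::slepair_unique} applies, with Lemmas~\ref{lem::sle_positivechance_stay} and~\ref{lem::sle_positivechance_coincide} replaced by their $(N-1)$-$\SLE$ analogues, Lemmas~\ref{lem::positivechance_stay_generalize} and~\ref{lem::positivechance_coincide_generalize}. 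The induction is what keeps the problem two-dimensional at every stage; a direct sweep over all $N$ indices might be made to work but requires organizing the uniform estimates more carefully than you indicate.

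The gap is your sandwich/monotone-coupling layer. You assert that domain monotonicity of chordal $\SLE_\kappa$ is ``available precisely because $\kappa\le 4$'', but the paper only supplies such a coupling for $\kappa\in[8/3,4]$, via the loop-soup/excursion-soup construction of Corollary~\ref{cor::sle_coupling}; for $\kappa<8/3$ the central charge is negative, no such construction is given, and the paper handles this range instead by direct Radon--Nikodym estimates (Lemma~\ref{lem::sle_boundary_perturbation} together with the Poisson-kernel inequality~\eqref{eqn::poissonkernel_mono2}). More to the point, the paper's coupling is a Doeblin-type minorization: two chains started from \emph{arbitrary} configurations can be made to agree after a bounded number of steps with uniformly positive probability. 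No partial order, no extremal elements, no sandwich. Your monotone layer is therefore unjustified as stated for $\kappa<8/3$ and, in any case, unnecessary for the conclusion.
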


The  existence  part  of Theorem~\ref{thm::global_unique}  is  already
known             ---             see~\cite{KozdronLawlerMultipleSLEs,
  LawlerPartitionFunctionsSLE,     PeltolaWuGlobalMultipleSLEs}.    We
briefly review the construction 
in   Section~\ref{subsec::existence}.    
J.~Miller and S.~Sheffield proved the uniqueness    part   of
Theorem~\ref{thm::global_unique} for $N=2$ in~\cite[Theorem~4.1]{MillerSheffieldIG2},
making use of   a  coupling  of the  $\SLE$  with  the Gaussian free field. 
Unfortunately, this proof does not  apply\footnote{Another proof (which might be generalizable for $N \geq 3$)
for the case of two curves recently appeared in a new appendix to~\cite{MSW}. 
However, this proof does not give exponential convergence rate of the Markov chain discussed in Remark~\ref{rem::mixing}.} 
in general for $N \geq 3$ commuting $\SLE$s. 
In the present article, we first give a different proof for the existence and uniqueness
when  $N=2$ by a Markov chain argument (in  Section~\ref{subsec::uniqueness_pair}),   and  then
generalize  the uniqueness proof      for      all     $N      \geq      3$      (in Section~\ref{subsec::uniqueness_general}).
Our proof also gives exponential convergence rate for the Markov chain.

\smallbreak

Lastly, let us note that Definition~\ref{def::global_multiple_sle} implies the following cascade  property. 
Suppose that
the collection of random curves $(\eta_1, \ldots, \eta_N)\in
  X_{\simple}^{\alpha}(\Omega; x_1, \ldots, x_{2N})$ has the law of  a global
$N$-$\SLE_{\kappa}$ associated  to the link pattern $\alpha\in\LP_N$. Assume
also that $\link{j}{j+1} \in  \alpha$ for some  $j \in \{1, \ldots,  N\}$, and
let $\eta_1$ be the curve connecting $x_j$ and $x_{j+1}$. Then, the conditional
law of the curves $(\eta_2,  \ldots, \eta_N)$ given $\eta_1$ is the global
$(N-1)$-$\SLE_{\kappa}$ associated to $\alpha\removeLink\link{j}{j+1}$.

\subsection{Multiple Interfaces in the Critical Planar Ising Model}
\label{subsec::intro_Ising}

\begin{figure} 
  \floatbox[{\capbeside\thisfloatsetup{capbesideposition={right,center},capbesidewidth=0.5\textwidth}}]{figure}[\FBwidth]
  {\caption{Simulation of  the critical  Ising model  with alternating
      boundary  conditions.  There  are  eight marked  points  on  the
      boundary of the polygon  $\Omega^{\delta}$ and therefore, four interfaces
      connect the  marked points  pairwise. We only  illustrate one
      possible connectivity of the curves  (the reader may verify that
      there are $14$ different topological possibilities).}
    \label{fig::Ising}}
  {\includegraphics[width=.4\textwidth]{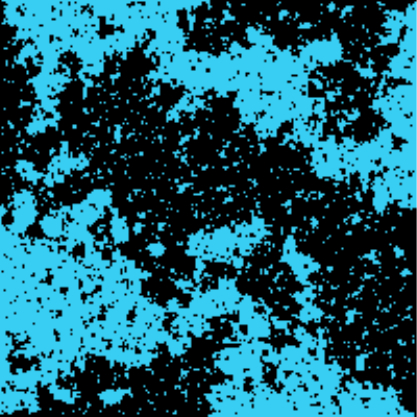}
    \qquad \quad}
\end{figure}

Next,  we consider  critical Ising  interfaces in  the scaling  limit.
Assuming   that  $\Omega$   is  bounded,   we  let   discrete  domains
$(\Omega^{\delta};  x_1^{\delta},  \ldots,  x_{2N}^{\delta})$  on  the
square  lattice   approximate  $(\Omega;  x_1,  \ldots,   x_{2N})$  as
$\delta\to 0$ (we will provide the details of the approximation scheme
in  Section~\ref{sec::ising_fkperco}), and  we  consider the  critical
Ising model (which we also define in Section~\ref{sec::ising_fkperco})
on  $\Omega^{\delta}$ with  the  following \textit{alternating  boundary
  conditions}: 
\begin{align}\label{eqn::ising_bc_alternating}
  \begin{cases}
    \oplus \text{ on }(x_{2j-1}^{\delta} \, x_{2j}^{\delta}) ,   & \quad\text{for }j\in\{1, \ldots, N\},   \\
    \ominus \text{ on }(x_{2j}^{\delta} \,  x_{2j+1}^{\delta}) , & \quad \text{for }j\in\{0,1,\ldots, N\},
  \end{cases}
\end{align}
where $(x_i^{\delta} \, x_{i+1}^{\delta})$ stands for the counterclockwise boundary arc from $x_i^{\delta}$ to $x_{i+1}^{\delta}$, 
with   the   convention  that   $x_{2N}^{\delta}=x_{0}^{\delta}$   and
$x_{2N+1}^{\delta}=x_1^{\delta}$.
With the alternating boundary conditions~\eqref{eqn::ising_bc_alternating}, 
in the configurations of the Ising model, $N$   random   interfaces
$(\eta_1^{\delta}, \ldots, \eta_N^{\delta})$ connect pairwise the $2N$ boundary
points  $x_1^{\delta},  \ldots,  x_{2N}^{\delta}$,  forming  a  planar
connectivity encoded in a link pattern $\conn^{\delta} \in \LP_N$. See Figure~\ref{fig::Ising} for an illustration.

To understand the scaling limit of the interfaces, we must specify the topology in which the convergence of the curves occurs. 
Thus, we let $X$ denote the set of planar oriented curves, that is, continuous mappings from $[0,1]$ to $\C$ modulo reparameterization.
We equip $X$ with the metric
\begin{align*} 
  \metric(\eta, \tilde{\eta}):=
  \inf_{\varphi, \tilde{\varphi}} \sup_{t\in[0,1]} |\eta(\varphi(t))-\tilde{\eta}(\tilde{\varphi}(t))| ,
\end{align*}
where  the  infimum  is   taken  over  all  increasing  homeomorphisms $\varphi, \tilde{\varphi} \colon [0,1]\to [0,1]$. 
Then,  the metric space $(X, d)$ is
complete and separable. On the space $X_{\simple}^{\alpha}(\Omega;  x_1, \ldots,
  x_{2N})$, we use the metric
\begin{align*}
  \metric ((\eta_1, \ldots, \eta_N),(\tilde{\eta}_1, \ldots, \tilde{\eta}_N) ) := \max_{1 \leq j \leq N} \metric(\eta_j, \tilde{\eta}_j) .
\end{align*}

\begin{proposition}
  \label{prop::ising_multiple_cvg}
  Let $\alpha\in   \LP_N$. Then, as $\delta \to 0$, conditionally   on   the   event
  $\{\conn^{\delta}=\alpha\}$,    the    law     of    the    collection
  $(\eta_1^{\delta},\ldots,   \eta_N^{\delta})$   of  critical   Ising
  interfaces converges weakly to the global $N$-$\SLE_3$ associated to~$\alpha$. 
  In particular, as $\delta \to 0$, the law of
  a single  curve $\eta_j^{\delta}$ in this  collection connecting two
  points  $x_a$ and  $x_b$
  converges  weakly to  a conformal  image of  the Loewner  chain with
  driving function given by Equation~\eqref{eqn::loewnerchain_purepartition} in Section~\ref{subsec:marginal} with $\kappa=3$.
\end{proposition}

We      prove       Proposition~\ref{prop::ising_multiple_cvg}      in
Section~\ref{subsec::ising_multiple_cvg}, where we also define the Ising model and discuss some of its main features.
The key ingredients in the proof are results from~\cite{DuminilSidoraviciusTassionContinuityPhaseTransition, KemppainenSmirnovRandomCurves}
for the relative compactness of the curves,  
a technical RSW  estimate~\cite{CDCHKSConvergenceIsingSLE} to rule out pathological behavior, 
and convergence of one interface~\cite{CDCHKSConvergenceIsingSLE} combined with
Theorem~\ref{thm::global_unique} for the identification of the limit.

\subsection{Multiple Interfaces in the Critical Planar FK-Ising Model}
\label{subsec::intro_FK-Ising}

In Section~\ref{sec::ising_fkperco},
we also define and discuss the  random-cluster  models, whose interfaces conjecturally converge to $\SLE_\kappa$ curves with $\kappa \in (4,6]$.
Using the discrete holomorphic observable, the convergence has been rigorously proven 
for the case of the FK-Ising model with $\kappa = 16/3$ 
for a single interface~\cite{CDCHKSConvergenceIsingSLE} and two interfaces~\cite{KemppainenSmirnovFKIsing} 
--- we provide with a proof for the general case. 
Hence, we  consider the critical FK-Ising model on $\Omega^{\delta}$ with the following alternating boundary conditions
(illustrated in Figure~\ref{fig::FK_loop_representation}):
\begin{align}\label{eq::FK_alternating}
  \begin{cases}
    \text{wired on }(x_{2j-1}^{\delta} \, x_{2j}^{\delta}) , \quad & \text{for }j\in\{1,\ldots, N\},     \\
    \text{free on }(x_{2j}^{\delta} \, x_{2j+1}^{\delta}) , \quad  & \text{for }j\in\{0,1, \ldots, N\} .
  \end{cases}
\end{align}
As in the case of the Ising model, $N$ interfaces $(\eta_1^{\delta}, \ldots, \eta_N^{\delta})$ connect
pairwise the $2N$ boundary points $x_1^{\delta}, \ldots, x_{2N}^{\delta}$,
forming a planar connectivity encoded in a link pattern $\conn^{\delta} \in \LP_N$.
However, this time the scaling limits are not simple curves, and
we need to extend the definition of a global multiple $\SLE_{\kappa}$ to include the range $\kappa \in (4,6]$.
For this, we let $X_0(\Omega; x, y)$ denote the closure of the space $X_{\simple}(\Omega; x, y)$ in the metric topology of $(X,\metric)$.
Note 
that the curves in $X_0(\Omega; x, y)$ may have multiple points but no self-crossings. In particular, for all $\kappa > 4$,
the chordal $\SLE_\kappa$ curve belongs to this space almost surely.

Then, for each $N\ge 2$ and $\alpha  =  \{  \link{a_1}{b_1}, \ldots,  \link{a_N}{b_N}  \} \in \LP_N$, we denote by $X_0^{\alpha}(\Omega; x_1, \ldots, x_{2N})$
the collection of curves $(\eta_1,\ldots, \eta_N)$ such that, for each $j\in\{1, \ldots, N\}$, we have $\eta_j\in X_0(\Omega; x_{a_j}, x_{b_j})$ and
$\eta_j$
does not disconnect any two points $x_a$, $x_b$ such that $\link{a}{b} \in \alpha$ from each other. 
Note that $X_0^{\alpha}(\Omega;  x_1, \ldots, x_{2N})$ is  not complete.
Above, the  global $N$-$\SLE_{\kappa}$ was defined  for  $\kappa  \in (0,4]$ --- we    now  extend  this
definition    to    all    $\kappa    \in    (0,8)$    by    replacing
$X_{\simple}^{\alpha}(\Omega;      x_1,\ldots,      x_{2N})$      with
$X_0^{\alpha}(\Omega; x_1, \ldots,  x_{2N})$ in Definition~\ref{def::global_multiple_sle}.
Note that this definition would actually still  formally make sense in  the range $\kappa \geq  8$, but
since  for such  values of  $\kappa$, the  $\SLE_\kappa$ process  is
described by  a Peano curve,  uniqueness of a multiple  $\SLE$ clearly
fails, as  one can  specify the common  boundaries of  the different
curves  in  an  arbitrary   way  while  preserving  the  conditional
distributions of individual curves.

\begin{proposition}\label{prop::fkising_alternating_cvg}
  Theorem~\ref{thm::global_unique} also holds for $\kappa=16/3$,
  and for any $\alpha\in   \LP_N$, as $\delta \to 0$, conditionally   on   the   event
  $\{\conn^{\delta}=\alpha\}$, the law of the collection $(\eta_1^{\delta},\ldots, \eta_N^{\delta})$ of
  critical FK-Ising interfaces converges weakly to the global $N$-$\SLE_{16/3}$ associated to $\alpha$. 
\end{proposition}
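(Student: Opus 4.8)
The plan is to follow the same three-step strategy as in the proof of Proposition~\ref{prop::ising_multiple_cvg}, adapting each step to the non-simple regime $\kappa = 16/3 \in (4,6)$ and to the FK-Ising model. Write $\PP^\delta_\alpha$ for the law of $(\eta_1^\delta, \ldots, \eta_N^\delta)$ conditioned on $\{\LA^\delta = \alpha\}$, regarded as a probability measure on $(X,d)^N$. The goal is to show that $\PP^\delta_\alpha$ converges weakly, that every subsequential limit is a global $N$-$\SLE_{16/3}$ associated to $\alpha$ supported on $X_0^\alpha(\Omega; x_1, \ldots, x_{2N})$, and that such a measure is unique; combining these yields both assertions of the proposition, since uniqueness upgrades subsequential convergence to full convergence.

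First I would establish relative compactness of $(\PP^\delta_\alpha)_{\delta>0}$. As $(X,d)$ is complete and separable, by Prokhorov's theorem it suffices to prove tightness of each marginal. The crossing estimates and Russo--Seymour--Welsh bounds for the critical FK-Ising model from \cite{ChelkakDuminilHonglerCrossingprobaFKIsing, DuminilSidoraviciusTassionContinuityPhaseTransition} verify the geometric condition of \cite{KemppainenSmirnovRandomCurves}, which gives tightness and moreover forces every subsequential limit to be almost surely supported on curves admitting a continuous Loewner description. One must also check that the conditioning on $\{\LA^\delta = \alpha\}$ preserves these estimates and that, in the limit, the curves keep a positive distance from the boundary arcs prescribed in the definition of $X_0^\alpha$, so that subsequential limits actually lie in $X_0^\alpha$; this requires multi-arm (in particular six-arm) estimates to rule out curves merging into the boundary or degenerating the connectivity $\alpha$.

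Next, fix a subsequential limit $\mu$ and show it satisfies the conditional law property of the extended Definition~\ref{def::global_multiple_sle}. Fix $j$ and condition on $\{\eta_i : i \neq j\}$. By the spatial (domain) Markov property of the FK-Ising model, given the other discrete interfaces the conditional law of $\eta_j^\delta$ is that of a single critical FK-Ising interface with Dobrushin boundary data in the component of $\Omega^\delta \setminus \bigcup_{i \neq j} \eta_i^\delta$ whose boundary carries $x_{a_j}^\delta$ and $x_{b_j}^\delta$. As $\delta \to 0$ these random domains converge in the Carathéodory sense to the corresponding component of $\Omega \setminus \bigcup_{i \neq j} \eta_i$, and the single-interface convergence result \cite{ChelkakSmirnovIsing, CDCHKSConvergenceIsingSLE} identifies the limiting conditional law as the chordal $\SLE_{16/3}$ connecting $x_{a_j}$ and $x_{b_j}$ in that component. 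Since both the conditioning and the target domain are random, I would pass to the limit after checking that the single-curve convergence is stable under this Carathéodory convergence, using the a priori bounds from the first step to avoid degenerate domains. This verifies the defining property, so $\mu$ is a global $N$-$\SLE_{16/3}$; in particular existence holds at $\kappa = 16/3$.

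Finally, for the claim that Theorem~\ref{thm::global_unique} holds at $\kappa = 16/3$, I would extend the resampling argument of Sections~\ref{subsec::uniqueness_pair} and~\ref{subsec::uniqueness_general} from simple curves to $X_0^\alpha$. Given two global $N$-$\SLE_{16/3}$ measures $\mu, \mu'$, resampling one curve at a time according to its conditional chordal $\SLE_{16/3}$ law preserves each measure; coupling the two chains and applying the same updates, one shows the configurations coalesce, forcing $\mu = \mu'$. The essential continuum input is that chordal $\SLE_{16/3}$ depends continuously, in the topology of $(X,d)$, on the Carathéodory data of its domain and on its endpoints, which holds throughout $\kappa \in (0,8)$. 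I expect the main obstacle to be precisely here: in the non-simple regime the complementary domains are rough and the curves may touch the boundary, so the contraction estimate driving the coalescence must be re-established in the $X_0$ topology, and one must ensure that resampling keeps configurations inside $X_0^\alpha$, i.e.\ at positive distance from the prescribed arcs. The uniform a priori control needed for this is exactly what the FK-Ising RSW and multi-arm estimates supply, which is also why the explicit construction (valid only for $\kappa \le 4$) can here be bypassed. With uniqueness in hand, every subsequential limit from the first two steps equals the unique global $N$-$\SLE_{16/3}$ associated to $\alpha$, so $\PP^\delta_\alpha$ converges weakly to it, completing the proof.
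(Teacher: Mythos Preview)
Your three-step outline---tightness via RSW and \cite{KemppainenSmirnovRandomCurves}, identification of subsequential limits via domain Markov plus single-interface convergence, then uniqueness by a resampling Markov chain---matches the paper's structure, and the first two steps are essentially the paper's arguments.

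The third step has a genuine gap. You correctly flag that resampling a non-simple $\SLE_{16/3}$ may produce a curve touching the wrong boundary arc and hence leave $X_0^{\alpha}$, but your proposed remedy (uniform a priori control from RSW and multi-arm estimates) cannot fix this: those are discrete estimates, while the resampling is a continuum operation on an already-limiting object. The paper's mechanism is quite different. For $N=2$ (Lemma~\ref{lem::two_sle_beyond4}) it shows the Markov chain enters $X_0^{\alpha}$ after finitely many steps by a Hausdorff-dimension argument: by $\SLE$ duality the left boundary of the resampled $\eta^R$ does not hit its own arc, so $\eta^R$ can touch the arc $(y^L x^L)$ only inside the \emph{previous} intersection set, and a one-point estimate (Lemma~\ref{lem::appendix}) shows each such intersection drops Hausdorff dimension by $\beta=(8-\kappa)/\kappa$. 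After $\lceil\kappa/(8-\kappa)\rceil+1$ rounds the intersection is empty and the chain stays in $X_0^{\alpha}$ thereafter; only then does the coupling argument run.

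Separately, the ``essential continuum input'' you name---continuous dependence of $\SLE$ on Carath\'{e}odory data---is not how the paper obtains the uniform estimates driving coalescence. Both the analogue of Lemma~\ref{lem::sle_positivechance_stay} and the coupling lemma (Lemma~\ref{lem::positivechance_coincide_general_beyond4}) are proved by passing \emph{discrete} facts to the limit: the former from FKG monotonicity~\eqref{eqn::rcm_boundary_comparison}, the latter by a discrete coupling in which simultaneous open and dual-open crossings inside $\Omega^L$ force the two interfaces to coincide, with the crossing probability bounded below by RSW uniformly over the enclosing domain. The $\kappa\le 4$ proofs of these lemmas used the explicit construction and partition functions of Section~\ref{subsec::existence}, which are unavailable for $\kappa>4$; the discrete model is what replaces them here.
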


We prove Proposition~\ref{prop::fkising_alternating_cvg} in Sections~\ref{subsec::globalexistencebeyond4} and~\ref{subsec::globaluniquebeyond4}
(the proof is summarized in Section~\ref{subsec::globalexistencebeyond4}).
The relative compactness of the curves is similar as in the Ising model.
To show that the scaling limit is a global multiple $\SLE_{16/3}$,
we use the convergence of one interface~\cite{CDCHKSConvergenceIsingSLE}
combined with technical analysis using the RSW estimates~\cite{DuminilSidoraviciusTassionContinuityPhaseTransition}.
To prove the uniqueness of the limit,
we use a Markov chain argument similar to the proof of Theorem~\ref{thm::global_unique}, 
thereby also establishing the uniqueness of the global multiple $\SLE_\kappa$ for $\kappa = 16/3$. 
To this end, a priori estimates from the discrete model give us strong enough control of the curves 
(replacing the $\SLE$ analysis used for Theorem~\ref{thm::global_unique} in Section~\ref{sec::global}).

\begin{remark}
Similar arguments as presented in Sections~\ref{subsec::globalexistencebeyond4} and~\ref{subsec::globaluniquebeyond4} 
combined with the results of~\textnormal{\cite{SmirnovPercolationConformalInvariance, CamiaNewmanPercolation}}
show that
  there also exists a unique global multiple $\SLE_\kappa$ for $\kappa = 6$ with any given connectivity pattern; 
  and   Proposition~\ref{prop::fkising_alternating_cvg} holds for the critical site percolation on the triangular lattice with $\kappa=6$.
\end{remark}

\bigskip

\noindent\textbf{Acknowledgments.}
We thank C.~Garban, K.~Izyurov, A.~Karrila, A.~Kemppainen, and F.~Viklund for interesting and useful discussions.
We thank I.~Manolescu for very useful comments on the RSW estimates. 
We also thank the referee for careful comments which improved the presentation, provided an alternative proof for the existence of 
the global 2-SLEs in Section~\ref{sec::global}, and clarified Section~\ref{sec::ising_fkperco}.

V.B.   is   funded   by   the   ANR   project
ANR-16-CE40-0016. 
E.P. is funded by the Deutsche Forschungsgemeinschaft (DFG, German Research Foundation) under Germany's Excellence Strategy-EXC-2047/1-390685813.
H.W. is funded by Beijing Natural Science Foundation (JQ20001, Z180003) and Chinese Thousand Talents Plan for Young Professionals. 
During  this work, E.P.  and H.W. were supported  by the
ERC AG COMPASP, the NCCR SwissMAP,  and the Swiss~NSF. 
Part of this work was done while E.P. and H.W. visited the IHES, and
the first version of the paper was completed while E.P. and H.W. were visiting the MFO as a ``research pair".
\section{Preliminaries}
\label{sec::pre}
In this section, we give some preliminary results, for use in subsequent sections.
In Section~\ref{subsec::pre_bebl}, we discuss Brownian excursions and the Brownian loop measure.
These concepts are needed frequently in Sections~\ref{sec::pre} and~\ref{sec::global}.
In Sections~\ref{subsec::pre_sle} and~\ref{subsec::pre_sle_bdrypert}, we define the chordal $\SLE_\kappa$
and study its relationships in different domains via so-called boundary perturbation properties.
In Section~\ref{subsec::pre_sle_coupling}, we give a crucial coupling result for $\SLE$s in different domains.
This coupling is needed in the proof of Theorem~\ref{thm::global_unique} in Section~\ref{sec::global}.

\subsection{Brownian Excursions and Brownian Loop Measure}
\label{subsec::pre_bebl}

We call a polygon $(\Omega; x, y)$ with two marked points a \textit{Dobrushin domain}.
Given two boundary points $x,y \in \partial \Omega$, we denote by $(y \, x)$ the counterclockwise arc of $\partial\Omega$ from $y$ to $x$.
Also,  if $U  \subset \Omega$  is  a simply  connected subdomain  
that agrees with $\Omega$ in neighborhoods of $x$ and $y$, we say that $U$ is a \textit{Dobrushin subdomain} of $\Omega$.
For a Dobrushin domain $(\Omega; x, y)$,
the \textit{Brownian excursion measure} $\nu(\Omega; (y\, x))$
is a conformally invariant measure on Brownian excursions in 
$\Omega$ with their two endpoints on the arc $(y\, x)$
--- see~\cite[Section 3]{LawlerWernerBrownianLoopsoup} for definitions.
It is a $\sigma$-finite infinite measure, with the following restriction property:
for any Dobrushin subdomain
$U \subset \Omega$ that agrees with $\Omega$ in a neighborhood of the arc $(y\, x)$, we have
\begin{align}\label{eqn::brownianexcursion_restriction}
    \nu(\Omega; (y\, x)) [ \, \cdot \, \one_{\{e \subset U\}} ] = \nu(U; (y\, x))[ \, \cdot \, ].
\end{align}
For $\xi\geq 0$, we call a Poisson point process with intensity $\xi \nu(\Omega; (y\, x))$ a \textit{Brownian excursion soup}.


Whenever $x$ and $y$ lie on sufficiently regular boundary segments of $\Omega$, 
we define the \textit{boundary Poisson kernel $H_{\Omega}(x,y)$}
as the unique function which in the upper-half plane $\HH=\{z\in\C \colon \im{z}>0\}$ is given by
\begin{align*} 
    H_{\HH}(x,y) = |y-x|^{-2} , \qquad x,y \in \R
\end{align*}
and which in $\Omega$ is defined via conformal covariance:
for any conformal map $\varphi \colon \Omega \to \varphi(\Omega)$, we have
\begin{align}\label{eqn::poisson_cov}
    H_{\Omega}(x,y) = |\varphi'(x)\varphi'(y)| H_{\varphi(\Omega)}(\varphi(x), \varphi(y)) ,
\end{align}
and in particular,
$H_{\Omega}(x,y) := |\varphi'(x)\varphi'(y)| H_{\HH}(\varphi(x), \varphi(y))$, with $\varphi \colon \Omega \to \HH$.

\begin{lemma} \label{lem::poissonkernel_mono}
    Let $(\Omega; x, y)$ be a 
    Dobrushin domain with $x, y$ on 
    sufficiently regular boundary segments.
    Let $U, V \subset \Omega$ be two Dobrushin subdomains that agree with $\Omega$
    in a neighborhood of the arc $(y\, x)$. Then we have
    \begin{align}\label{eqn::poissonkernel_mono1}
        H_{\Omega}(x,y) \; & \ge H_{U}(x,y) ,            \\
        \label{eqn::poissonkernel_mono2}
        H_{\Omega}(x,y) \; H_{U\cap V}(x,y)
        \;                 & \ge H_{U}(x,y) \; H_V(x,y).
    \end{align}
\end{lemma}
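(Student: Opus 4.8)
The plan is to read both inequalities probabilistically through the identity ``boundary Poisson kernel $=$ endpoint intensity of Brownian excursions,'' and then to recognize the second inequality as a positive-correlation statement for a single excursion. First I would recall that disintegrating the excursion measure $\nu(\Omega;(yx))$ according to the two endpoints of the excursion exhibits $H_\Omega$ as the endpoint intensity and produces a normalized probability measure $\mu_\Omega^{\#}(x,y)$ on excursions from $x$ to $y$ (this disintegration is standard for $x,y$ on analytic boundary segments). Applying the restriction property \eqref{eqn::brownianexcursion_restriction} to a Dobrushin subdomain $U$ that agrees with $\Omega$ near $x$ and $y$, and reading off the endpoint densities at $(x,y)$, yields the key identity
\[
\frac{H_U(x,y)}{H_\Omega(x,y)} = \mu_\Omega^{\#}(x,y)\bigl[\,e \subset U\,\bigr],
\]
the probability that the excursion from $x$ to $y$ in $\Omega$ stays inside $U$. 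Being a probability, it is at most $1$, which is exactly \eqref{eqn::poissonkernel_mono1}. One point to handle with care is that the identity uses that $U$ agrees with $\Omega$ near both endpoints, so that the conformal covariance factors in \eqref{eqn::poisson_cov} match at $x$ and $y$.

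Next, dividing \eqref{eqn::poissonkernel_mono2} by $H_\Omega(x,y)^2$ and applying the identity above to $U$, $V$ and $U\cap V$ rewrites the claim as
\[
\mu_\Omega^{\#}(x,y)\bigl[\,e\subset U\cap V\,\bigr] \;\ge\; \mu_\Omega^{\#}(x,y)\bigl[\,e\subset U\,\bigr]\,\mu_\Omega^{\#}(x,y)\bigl[\,e\subset V\,\bigr].
\]
Here I would first observe that $\{e\subset U\cap V\}=\{e\subset U\}\cap\{e\subset V\}$, since the excursion is connected and its endpoints $x,y$ lie in the component of $U\cap V$ containing the arc $(yx)$ (which is the component in which $H_{U\cap V}$ is to be read). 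Thus \eqref{eqn::poissonkernel_mono2} asserts that the two avoidance events $\{e\subset U\}$ and $\{e\subset V\}$ are positively correlated under $\mu_\Omega^{\#}(x,y)$. Conditioning on $\{e\subset U\}$ and invoking the restriction property once more — this conditioning turns the law into $\mu_U^{\#}(x,y)$ — shows the claim is equivalent to the monotonicity
\[
\mu_U^{\#}(x,y)\bigl[\,e\subset V\,\bigr] \;\ge\; \mu_\Omega^{\#}(x,y)\bigl[\,e\subset V\,\bigr],
\]
i.e.\ forcing the excursion into the smaller domain $U$ makes it at least as likely to avoid the obstacle $\Omega\setminus V$.

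The main obstacle is precisely this positive correlation, which is \emph{false} for avoidance events of a single path in general position; what rescues it is the hypothesis that $U$ and $V$ both agree with $\Omega$ in a neighborhood of the arc $(yx)$, so that the removed sets $\Omega\setminus U$ and $\Omega\setminus V$ lie at positive distance from the arc $(yx)$ and from $x,y$ — both obstacles sit on the \emph{same side} of the excursion. Geometrically, conditioning the excursion to avoid $\Omega\setminus U$ pushes it toward the arc $(yx)$, which can only help it avoid the like-situated set $\Omega\setminus V$. To make this rigorous I would construct a reflection-type monotone coupling of the excursions in $U$ and in $\Omega$: run them together and, whenever the $\Omega$-excursion crosses $\partial U$ into $\Omega\setminus U$, reflect the $U$-excursion back along $\partial U$, keeping it at least as close to the arc $(yx)$; since $\Omega\setminus V$ lies on the far side, the coupled $U$-excursion then avoids $\Omega\setminus V$ whenever the $\Omega$-excursion does. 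An alternative route is to discretize the excursion by a killed random-walk bridge and deduce the positive correlation from a Harris/FKG inequality adapted to this same-side configuration, then pass to the scaling limit. In either approach the delicate step is to exploit the separation of both obstacles from the arc $(yx)$, and I expect verifying the coupling (or, in the discrete model, the monotone lattice structure) to be the technical heart of the argument.
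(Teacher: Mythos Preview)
Your reduction is correct: via the one-sided restriction property of the normalized Brownian excursion from $x$ to $y$, the ratio $H_U(x,y)/H_\Omega(x,y)$ is indeed the probability that the excursion stays in $U$, so \eqref{eqn::poissonkernel_mono1} is immediate and \eqref{eqn::poissonkernel_mono2} becomes precisely the positive-correlation statement you wrote. The gap is in the last step, proving that positive correlation. Your reflection-coupling sketch does not go through as stated: the obstacles $\Omega\setminus U$ and $\Omega\setminus V$ are both attached to the arc $(xy)$, but in arbitrary relative positions along it, so there is no natural reflection line or monotone order to exploit; conditioning the excursion to avoid one hull does not push it uniformly toward $(yx)$ in any sense that obviously helps it avoid a second, differently placed hull. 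Likewise, a planar excursion is not a one-dimensional process carrying a lattice structure on which a Harris/FKG inequality for avoidance events is available. You have correctly isolated the inequality, but not a mechanism that proves it.

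The paper sidesteps this difficulty entirely by using a \emph{different} probabilistic interpretation of the same ratio. Instead of a single excursion from $x$ to $y$, it takes a Poisson soup $\PE$ of Brownian excursions with intensity $\nu(\Omega;(yx))$ (endpoints ranging over the whole arc $(yx)$), for which one also has $\PP[\,e\subset U\ \forall\, e\in\PE\,]=H_U(x,y)/H_\Omega(x,y)$. The restriction property~\eqref{eqn::brownianexcursion_restriction} then says that the sub-collection $\PE_V=\{e\in\PE:e\subset V\}$ is itself a soup of intensity $\nu(V;(yx))$, hence $\PP[\,e\subset U\ \forall\, e\in\PE_V\,]=H_{U\cap V}(x,y)/H_V(x,y)$. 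Since $\PE_V\subset\PE$ as sets of excursions, the second event contains the first, and \eqref{eqn::poissonkernel_mono2} follows in one line. The point is that passing from a single bridge to a Poisson cloud converts the delicate ``same-side positive correlation'' into a trivial set-theoretic containment; this is exactly the idea your proposal is missing.
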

\begin{proof}
    The inequality~\eqref{eqn::poissonkernel_mono1} follows from~\eqref{eqn::poisson_cov}.
    To prove~\eqref{eqn::poissonkernel_mono2},
    let $\PE$ be a Brownian excursion soup with intensity $\nu(\Omega; (y\, x))$.
    The union of excursions in $\PE$ satisfies the so-called one-sided restriction property (see, e.g.,~\cite[Theorem~8]{WernerConformalRestrictionRelated}),
    which implies that
    $ \PP[e\subset U\; \forall \; e\in\PE]=|\varphi'(x)\varphi'(y)|$,
    where $\varphi$ is any conformal map from $U$ onto $\Omega$ fixing $x$ and $y$.
    Combining with~\eqref{eqn::poisson_cov}, we obtain
    \begin{align*} 
        \PP[e\subset U\; \forall \; e\in\PE]=\frac{H_{U}(x,y)}{H_{\Omega}(x,y)} .
    \end{align*}
    Now, denote by $\PE_V$ the collection of excursions in $\PE$ that are contained in $V$. By \eqref{eqn::brownianexcursion_restriction},
    we know that $\PE_V$ is a Brownian excursion soup with intensity $\nu(V; (y\, x))$.
    The property~\eqref{eqn::poissonkernel_mono2} now follows from
    \begin{align*}
    \frac{H_{U\cap V}(x,y)}{H_V(x,y)}=\PP[e\subset U\; \forall \; e\in \PE_V]\ge \PP[e\subset U\; \forall \; e\in \PE]=\frac{H_{U}(x,y)}{H_{\Omega}(x,y)}. 
    \end{align*}
This  concludes the proof.
\end{proof}

The \textit{Brownian loop measure} $\mu(\Omega)$ is a conformally invariant measure on unrooted Brownian loops
in $\Omega$ ---
see, e.g.,~\cite[Sections~3~and~4]{LawlerWernerBrownianLoopsoup} for definitions.
It is a $\sigma$-finite infinite measure, which
has the following restriction property: for any subdomain $U \subset \Omega$, we have
\begin{align*}
    \mu(\Omega)[ \, \cdot  \, \one_{\{\ell \subset U\}}]=\mu(U)[ \, \cdot \, ] .
\end{align*}
For $\xi\geq 0$, we call a Poisson point process with intensity $\xi \mu(\Omega)$ a \textit{Brownian loop soup}.
This notion will be needed in Section~\ref{subsec::pre_sle_coupling}.

Given 
two disjoint subsets $V_1, V_2 \subset \Omega$, we
denote by $\mu(\Omega; V_1, V_2)$ the Brownian loop measure of loops in $\Omega$ that intersect both
$V_1$ and $V_2$. In other words,
\begin{align*}
 \mu(\Omega; V_1, V_2) := \mu\{\ell \; \colon \; \ell \subset\Omega, \; \ell \cap V_1 \neq \emptyset, \; \ell \cap V_2 \neq \emptyset\} . 
\end{align*}
If $V_1, V_2$ are at positive distance from each other, both of them are closed, and at least one of them is compact,
then we have $0 \leq \mu(\Omega; V_1, V_2) < \infty$.
Furthermore, the measure $\mu(\Omega; V_1, V_2)$ is conformally invariant:
we have
$\mu(\varphi(\Omega); \varphi(V_1), \varphi(V_2)) = \mu(\Omega; V_1, V_2)$
for any conformal map $\varphi \colon \Omega \to f(\Omega)$.

For $n$ disjoint subsets $V_1, \ldots, V_n$ of $\Omega$, we denote by $\mu(\Omega; V_1, \ldots, V_n)$ the Brownian loop measure of loops in $\Omega$
that intersect all of $V_1, \ldots, V_n$. Again, provided that $V_j$ are closed and at least one of them is compact, $\mu(\Omega; V_1, \ldots, V_n)$ is finite.
This quantity will be needed in Section~\ref{sec::global}.

\subsection{Loewner Chains and the Schramm-Loewner Evolution}
\label{subsec::pre_sle}

An \textit{$\HH$-hull} is a compact subset $K$ of $\overline{\HH}$ such that $\HH\setminus K$ is simply connected.
Riemann's mapping theorem implies that
for any hull $K$, there exists a unique conformal map $g_K$ from $\HH\setminus K$ onto $\HH$ such that $\lim_{z\to\infty}|g_K(z)-z|=0$.
Such a map $g_K$ is called the conformal map from $\HH\setminus K$ onto $\HH$ \textit{normalized at} $\infty$.
By standard estimates of conformal maps, the derivative of this map satisfies
\begin{align*} 
    0 < g_K'(x) \leq 1 \qquad \text{for all } x \in \R \setminus K .
\end{align*}
In fact, this derivative can be viewed as the probability that the Brownian excursion in $\HH$ from $x$ to $\infty$ avoids the hull $K$
--- see~\cite{LawlerSchrammWernerConformalRestriction}.

Consider a family of conformal maps $(g_{t}, t\ge 0)$ which solve the  Loewner equation: for each $z\in\mathbb{H}$,
\[\partial_{t}{g}_{t}(z)=\frac{2}{g_{t}(z)-W_{t}} \qquad \text{and} \qquad g_{0}(z)=z,\]
where $(W_t, t\ge 0)$ is some real-valued continuous function, 
called the \textit{driving function}.
Also, denote $K_{t}:=\overline{\{z\in\mathbb{H}: T_{z}\le t\}}$, where
\begin{align*}
    T_z := \sup \big\{t\ge 0: \inf_{s\in[0,t]}|g_{s}(z)-W_{s}|>0 \big\}
\end{align*}
is the \textit{swallowing time} of the point $z$.
Then, $g_{t}$ is the unique conformal map from $H_{t}:=\mathbb{H}\setminus K_{t}$ onto $\mathbb{H}$ normalized at $\infty$.
The collection of $\HH$-hulls $(K_{t}, t\ge 0)$ associated with such maps is called a \textit{Loewner chain}.
We say that $(K_t, t\ge 0)$ is generated by the continuous curve $(\gamma(t), t\ge 0)$ if,
for any $t \geq 0$, the unbounded connected
component of $\HH\setminus\gamma[0,t]$ coincides with $H_t=\HH\setminus K_t$.

\smallbreak

In this article, we are concerned with particular hulls generated by curves. For $\kappa\ge 0$,
the random Loewner chain $(K_{t}, t\ge 0)$ driven by $W_t=\sqrt{\kappa}B_t$, where $(B_t, t\ge 0)$ is
a standard Brownian motion, is called the (chordal) \textit{Schramm-Loewner
    Evolution}, or $\SLE_{\kappa}$, in $\HH$ from $0$ to $\infty$.
S.~Rohde and O.~Schramm proved in~\cite{RohdeSchrammSLEBasicProperty} that
this Loewner chain is almost surely generated by a continuous transient curve $\gamma$, with
$|\gamma(t)| \to \infty$ as $t\to\infty$,
the $\SLE_\kappa$ curve.
This random curve exhibits the following phase transitions in the parameter $\kappa$: when $\kappa\in [0,4]$, it is a simple curve;
whereas when $\kappa > 4$, it has self-touchings, being space-filling if $\kappa\ge 8$.
The law of the $\SLE_\kappa$ curve is a probability measure on the space $X_0(\HH; 0,\infty)$, and we denote it by $\PP(\HH; 0,\infty)$.

By conformal  invariance, we can define  the $\SLE_\kappa$ probability
measure  $\PP(\Omega; x,y)$  in any  simply connected  domain $\Omega$
with  two marked  boundary points  $x,y \in  \partial \Omega$  (around
which $\partial  \Omega$ is  locally connected)  via pushforward  of a
conformal  map: if  $\gamma  \sim \PP(\HH;  0,\infty)$,  then we  have
$\varphi(\gamma)       \sim       \PP(\Omega;       x,y)$,       where
$\varphi  \colon  \HH \to  \Omega$  is  any  conformal map  such  that
$\varphi(0)=x$  and $\varphi(\infty)=y$.  In fact,  by the  results of
O.~Schramm~\cite{SchrammScalinglimitsLERWUST},                     the
$\left( \SLE_\kappa \right)_{\kappa \geq  0}$ are the only probability
measures on curves $\gamma  \in X_0(\Omega; x,y)$ satisfying conformal
invariance and the following domain  Markov property: given an initial
segment     $\gamma[0,\tau]$     of    the     $\SLE_\kappa$     curve
$\gamma  \sim \PP(\Omega;  x,y)$ up  to  a stopping  time $\tau$,  the
conditional law  of the  remaining piece $\gamma[\tau,\infty)$  is the
            law   $\PP(\Omega   \setminus    K_\tau;   \gamma(\tau),y)$   of   the
        $\SLE_\kappa$ curve in  the complement of  the hull $K_\tau$ of  the initial
            segment  from the tip $\gamma(\tau)$ to~$y$.

            \subsection{Boundary Perturbation for $\SLE$s}
            \label{subsec::pre_sle_bdrypert}

            The chordal $\SLE_\kappa$ curve $\gamma \sim \PP(\Omega; x, y)$ has a natural boundary perturbation property,
            where its law in a Dobrushin subdomain of $\Omega$ is given by weighting by a factor involving the Brownian loop measure
            and the boundary Poisson kernel.
            More precisely, when $\kappa\in (0,4]$, the $\SLE_\kappa$ is a simple curve only touching the boundary at its endpoints,
and its law in the subdomain is absolutely continuous with respect to its law in $\Omega$, as we state in the next Lemma~\ref{lem::sle_boundary_perturbation}.
However, for $\kappa > 4$, we cannot have such an absolute continuity property,
because the $\SLE_{\kappa}$ has a positive chance to hit the boundary of $\Omega$.
Nevertheless, in Lemma~\ref{lem::boundary_perturbation_beyond4} we show that if we restrict the two processes in a smaller domain,
then we retain the absolute continuity for $\kappa\in (4,8)$.

\smallbreak

Throughout this article, we use the following real parameters, depending on $\kappa>0$:
\begin{align} \label{eq: alpha and central charge}
    h = \frac{6-\kappa}{2\kappa} \qquad \qquad \text{and} \qquad \qquad  c = \frac{(3\kappa-8)(6-\kappa)}{2\kappa}.
\end{align}

\begin{lemma}\label{lem::sle_boundary_perturbation}
    Let $\kappa\in (0,4]$.
    Let $(\Omega; x, y)$ be a Dobrushin domain and $U\subset\Omega$ a Dobrushin subdomain.
    Then, the $\SLE_\kappa$ in $U$ connecting $x$ and $y$
    is absolutely continuous with respect to the $\SLE_\kappa$ in $\Omega$ connecting $x$ and $y$, with
    Radon-Nikodym derivative given by
    \begin{align*}
        \frac{\ud \PP(U; x,y)}{\ud \PP(\Omega; x,y)}{(\gamma)}
        = \left(\frac{H_{\Omega}(x,y)}{H_{U}(x,y)}\right)^h
        \one_{\{\gamma\subset U\}} \exp\big(c \mu(\Omega; \gamma, \Omega\setminus U)\big) . 
    \end{align*}
\end{lemma}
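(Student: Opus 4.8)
The plan is to reduce to the upper half-plane and to realize the claimed Radon--Nikodym derivative as the terminal value of an explicit local martingale built from the Loewner flow. By conformal invariance of $\SLE_\kappa$ and the conformal covariance~\eqref{eqn::poisson_cov} of the boundary Poisson kernel, I may assume $\Omega = \HH$, $x=0$, $y=\infty$, and set $A := \HH \setminus U$, a closed set bounded away from $0$ and $\infty$ with $\HH \setminus A$ simply connected. Let $\gamma \sim \PP(\HH; 0,\infty)$ have Loewner maps $(g_t)$ and driving function $W_t = \sqrt{\kappa}B_t$, and on the event $\{\gamma[0,t]\cap A = \emptyset\}$ let $\Phi_t \colon \HH \setminus g_t(A) \to \HH$ be the conformal map normalized at $\infty$, with $\Phi_0$ the normalized map of $U = \HH\setminus A$. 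Writing $m_t := \mu(\HH; \gamma[0,t], A)$ for the Brownian loop measure of loops meeting both $\gamma[0,t]$ and $A$, I would track
\begin{align*}
M_t := \left(\frac{\Phi_t'(W_t)}{\Phi_0'(0)}\right)^h \exp(c\, m_t)\, \one_{\{\gamma[0,t]\cap A = \emptyset\}}, \qquad M_0 = 1.
\end{align*}

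First I would show that $M_t$ is a local martingale under $\PP(\HH;0,\infty)$. This is an It\^o computation: the coordinate-change formula for the Loewner flow yields the time evolution of $\Phi_t$ and hence the drift of $\log \Phi_t'(W_t)$ in terms of $\Phi_t''/\Phi_t'$ and a Schwarzian-type expression $S\Phi_t(W_t)$, while the rate at which new loops meet the growing tip is given by the Lawler--Werner formula, which expresses $\partial_t m_t$ as a multiple of $S\Phi_t(W_t)$. The martingale part of $\log M_t$ is $h\,(\Phi_t''(W_t)/\Phi_t'(W_t))\,\ud W_t$, and the two drift contributions cancel \emph{exactly} when $h = (6-\kappa)/2\kappa$ and $c = (3\kappa-8)(6-\kappa)/2\kappa$ (note the convenient relation $c = (3\kappa-8)\,h$); this algebraic cancellation, which simultaneously matches the Schwarzian coefficients and the $\kappa$-dependent drift, is the heart of the matter.

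Next, by Girsanov's theorem the measure tilted by $M_t$ has driving function $W_t$ with added drift $\kappa h\,\Phi_t''(W_t)/\Phi_t'(W_t)\,\ud t = \tfrac{6-\kappa}{2}\,\Phi_t''(W_t)/\Phi_t'(W_t)\,\ud t$, which is precisely the compensating drift produced by the coordinate change $\Phi_t$; equivalently, under the tilt the image curve $\Phi_0(\gamma)$ is a chordal $\SLE_\kappa$ in $\HH$, so the tilted law is $\PP(U;0,\infty)$. Hence $M_t = \ud\PP(U;0,\infty)|_{\mathcal{F}_t}\big/\ud\PP(\HH;0,\infty)|_{\mathcal{F}_t}$ on the filtration generated by $\gamma[0,t]$. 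Passing to the limit $t\to\infty$, the indicator converges to $\one_{\{\gamma \subset U\}}$, the loop term to $m_\infty = \mu(\HH;\gamma,A) = \mu(\Omega;\gamma,\Omega\setminus U)$, and $\Phi_t'(W_t)\to 1$ as $g_t(A)$ recedes from the tip; since the normalizing constant satisfies $\Phi_0'(0)^{-h} = (H_{\Omega}(x,y)/H_U(x,y))^h$ by the conformal covariance~\eqref{eqn::poisson_cov} and the hydrodynamic normalization of $\Phi_0$ at the two endpoints (here $H_U/H_\Omega = \Phi_0'(0) \le 1$ by~\eqref{eqn::poissonkernel_mono1}), this yields the stated formula.

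The main obstacle is twofold. Analytically, the loop-measure identity $\partial_t m_t \propto S\Phi_t(W_t)$ and the bookkeeping of the $\Phi_t$-drift in the coordinate-change computation must be handled carefully; I would invoke the Brownian loop-measure results of Lawler and Werner rather than redo them. Probabilistically, $M_t$ must be upgraded from a local to a genuine uniformly integrable martingale in order to justify both the Girsanov step and the interchange of limit and expectation as $t \to \infty$. The restriction to $\kappa \in (0,4]$ is exactly what makes this control available: the curve is simple, so $\gamma \subset U$ occurs with positive probability and $m_t$ remains finite, though one still must rule out degeneration as $\gamma$ approaches $A$ or the marked points.
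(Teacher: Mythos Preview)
Your proposal is correct and follows precisely the approach of the references the paper cites for this lemma (Lawler--Schramm--Werner, Section~5, and Kozdron--Lawler, Proposition~3.1); the paper's own ``proof'' is nothing more than a pointer to those sources. In fact, the paper spells out essentially the same martingale computation---with $M_t = \varphi_t'(W_t)^h \exp\big(-\tfrac{c}{6}\int_0^t S\varphi_s(W_s)\,\ud s\big)$ and the Lawler--Werner identification of the Schwarzian integral with the loop mass---when proving the companion Lemma~\ref{lem::boundary_perturbation_beyond4} for $\kappa\in(4,8)$.
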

\begin{proof}
    See~\cite[Section 5]{LawlerSchrammWernerConformalRestriction} and~\cite[Proposition~3.1]{KozdronLawlerMultipleSLEs}.
\end{proof}

The next lemma is a consequence of results in~\cite{LawlerSchrammWernerConformalRestriction, LawlerWernerBrownianLoopsoup}. We briefly summarize the proof.

\begin{lemma}\label{lem::boundary_perturbation_beyond4}
    Let $\kappa\in (4,8)$.
    Let $(\Omega; x, y)$ be a Dobrushin domain. Let $\Omega^L\subset U\subset\Omega$ be
    Dobrushin subdomains
    such that
    $\Omega^L$ and $\Omega$ agree in a neighborhood of the arc $(y\, x)$ and
    $\dist(\Omega^L, \Omega\setminus U)>0$.
    Then, we have
    \begin{align*}
        \one_{\{\gamma\subset\Omega^L\}}\frac{\ud \PP(U; x, y)}{\ud \PP(\Omega; x, y)}{(\gamma)}
        = \left(\frac{H_{\Omega}(x,y)}{H_U(x,y)}\right)^h \one_{\{\gamma\subset\Omega^L\}}
        \exp\big(c\mu(\Omega; \gamma, \Omega\setminus U)\big). 
    \end{align*}
\end{lemma}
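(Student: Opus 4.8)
The plan is to realize the right–hand side as the terminal value of the density (change–of–measure) process between $\PP(U; x,y)$ and $\PP(\Omega; x,y)$, and to control that process by stopping the curve before it can approach the perturbation region $\Omega\setminus U$. I would keep all geometric quantities (the Brownian loop measure and Brownian excursion probabilities) in $\Omega$ itself, where $\Omega^L$ and $\Omega\setminus U$ are separated by $\varepsilon:=\dist(\Omega^L,\Omega\setminus U)>0$, using their conformal invariance, and uniformize $(\Omega;x,y)$ to $(\HH;0,\infty)$ only to describe the $\SLE_\kappa$ driving function $\sqrt\kappa B_t$ for the Girsanov computation. Following the derivation behind Lemma~\ref{lem::sle_boundary_perturbation} (see~\cite{LawlerSchrammWernerConformalRestriction, KozdronLawlerMultipleSLEs}), I would write down the process
\[
M_t = \one_{\{\gamma[0,t]\subset U\}}\left(\frac{H_{\Omega}(x,y)}{H_U(x,y)}\cdot\frac{H_{U\setminus K_t}(\gamma(t),y)}{H_{\Omega\setminus K_t}(\gamma(t),y)}\right)^h \exp\bigl(c\,\mu(\Omega;\gamma[0,t],\Omega\setminus U)\bigr),
\]
where $K_t$ is the hull of $\gamma[0,t]$. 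The Itô computation showing that $M_t$ is a local martingale for the $\SLE_\kappa$ in $\Omega$, with $M_0=1$, is algebraic in $\kappa$ and does not use $\kappa\le 4$; what fails for $\kappa>4$ is only that $M_t$ is not a uniformly integrable martingale up to $t=\infty$, because the curve can hit $\partial\Omega$ inside $\Omega\setminus U$.

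To remedy this, set $T:=\inf\{t\ge 0 : \dist(\gamma(t),\Omega\setminus U)\le\varepsilon/2\}$, the first time the curve comes $\tfrac{\varepsilon}{2}$–close to the perturbation region. For $t\le T$ the curve stays in $D_\varepsilon:=\{z\in\Omega : \dist(z,\Omega\setminus U)>\varepsilon/2\}\subset U$, so the indicator in $M_t$ equals $1$. By the interpretation~\eqref{eq: ratio of Poisson kernels}, the tip Poisson–kernel ratio equals the probability that a Brownian excursion from $\gamma(t)$ to $y$ in $\Omega\setminus K_t$ stays within $U\setminus K_t$; this probability is at most $1$ and, thanks to the separation $\varepsilon$, is bounded below by some $\delta(\varepsilon)>0$ via Beurling/Harnack–type estimates, so the factor raised to the power $h$ is bounded above and below (the lower bound being the point that matters when $h<0$, i.e. $\kappa\in(6,8)$). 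The loop term is bounded by $\exp\bigl(|c|\,\mu(\Omega;\overline{D_\varepsilon},\Omega\setminus U)\bigr)<\infty$, using finiteness of the loop measure between sets at positive distance. Hence $M_{t\wedge T}$ is a bounded, therefore uniformly integrable, martingale, and reweighting $\PP(\Omega;x,y)$ by $M_{t\wedge T}$ produces exactly the law of $\SLE_\kappa$ in $U$ stopped at the analogous time; passing to the limit, $\PP(U;x,y)$ and $\PP(\Omega;x,y)$ are mutually absolutely continuous on $\mathcal{F}_T$ with density $M_T=\lim_{t\to\infty}M_{t\wedge T}$.

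Finally I would restrict to $\{\gamma\subset\Omega^L\}$. Since $\dist(\Omega^L,\Omega\setminus U)=\varepsilon$, on this event the curve never reaches distance $\varepsilon/2$ of $\Omega\setminus U$, so $T=\infty$; thus $\{\gamma\subset\Omega^L\}\subset\{T=\infty\}$, and because $\gamma(\cdot\wedge T)$ recovers the whole curve on $\{T=\infty\}$, this event is $\mathcal{F}_T$–measurable. There $M_T=M_\infty$, and since $U$ agrees with $\Omega$ near $y$ and the curve approaches $y$ through this common region, the tip Poisson–kernel ratio tends to $1$, leaving $M_\infty=\one_{\{\gamma\subset U\}}\bigl(H_\Omega(x,y)/H_U(x,y)\bigr)^h\exp\bigl(c\,\mu(\Omega;\gamma,\Omega\setminus U)\bigr)$. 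Restricting the $\mathcal{F}_T$ absolute–continuity identity to $\{\gamma\subset\Omega^L\}\subset\{\gamma\subset U\}$ then yields the claimed formula. The main obstacle is exactly the absence of global absolute continuity for $\kappa>4$, which forces the localization at $T$; within it, the two delicate points are the uniform two–sided bound on the tip Poisson–kernel ratio (needed, via the separation $\varepsilon$, to bound $M_{t\wedge T}$ when $\kappa\in(6,8)$) and the verification that the non–stopped terminal event $\{\gamma\subset\Omega^L\}$ is captured by $\mathcal{F}_T$, so that the limiting density can be read off.
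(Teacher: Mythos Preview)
Your proposal is correct and follows essentially the same approach as the paper: both exploit the Lawler--Schramm--Werner local martingale and use the separation $\dist(\Omega^L,\Omega\setminus U)>0$ to obtain uniform two-sided bounds that turn it into a bounded martingale on the event $\{\gamma\subset\Omega^L\}$. The only differences are cosmetic: the paper writes the martingale as $\varphi_t'(W_t)^h\exp\bigl(-\tfrac{c}{6}\int_0^t S\varphi_s(W_s)\,\ud s\bigr)$ (so $M_0=\varphi'(0)^h$ rather than $1$) and stops at the disconnection time of $\Omega\setminus U$, where $\varphi_t'(W_t)=1$ exactly, instead of at your distance-based time.
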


\begin{proof}
    By conformal invariance, we may assume that the domain under consideration
    is $(\Omega; x, y) = (\HH; 0,\infty)$. Let
    $\gamma \sim \PP(\HH; 0,\infty)$, let $(W_t, t\ge 0)$ be its driving function,
    and $(g_t, t\ge 0)$ the corresponding conformal maps. Let $\varphi$ be the conformal map from $U$ onto $\HH$ normalized at $\infty$.
    On the event $\{\gamma\subset\Omega^L\}$, 
    define $T$ to be the first time when $\gamma$ disconnects $\HH\setminus U$ from $\infty$.

    Denote by $K_t$ the hull of $\gamma[0,t]$.
    For $t<T$, let $\tilde{g}_t$ be the conformal map from $\HH\setminus\varphi(K_t)$ onto $\HH$, and 
    let $\varphi_t$ be the conformal map from $g_t(U \setminus K_t)$ onto $\HH$, both normalized at $\infty$.
    Then we have $\tilde{g}_t\circ\varphi=\varphi_t\circ g_t$.
    Now we define, for $t<T$,
    \[M_t := \varphi_t'(W_t)^h\exp\left(-c\int_0^t\frac{S\varphi_s(W_s)}{6} \ud s \right),\]
    where $Sf$ is the Schwarzian derivative\footnote{The Schwarzian derivative
        of 
        $f$ is defined by
        $Sf(z):=\frac{f'''(z)}{f'(z)}-\frac{3f''(z)^2}{2f'(z)^2}$.}.
    It was proved in~\cite[Proposition~5.3]{LawlerSchrammWernerConformalRestriction} that 
    $M_t$ is a local martingale.
    Furthermore, using It\^o's formula, one can show that
    the law of $\gamma$ weighted by $M_t$ is
    $\PP(U; 0,\infty)$ up to time $t$.
    Also, it follows from~\cite[Proposition~5.22]{LawlerConformallyInvariantProcesses}
    (see also~\cite[Section~7]{LawlerWernerBrownianLoopsoup}) that
    \[-\int_0^t\frac{S\varphi_s(W_s)}{6} \ud s = \mu(\HH; \gamma[0,t], \HH\setminus U).\]

    Now, on the event $\{\gamma\subset\Omega^L\}$, there exists a constant $\eps=\eps(\HH, \Omega^L, U)>0$ such that for $t<T$,
    we have $\eps \leq \varphi_t'(W_t) \leq 1$.
    When $\kappa\in (4,6]$, we have $h\ge 0$ and $c\ge 0$, and thus, on the event $\{\gamma\subset\Omega^L\}$,
    we have $M_t\le \exp(c\mu(\HH; \Omega^L, \HH\setminus U))$.
    When $\kappa\in (6,8)$, we have $h\le 0$ and $c\le 0$, and in this case, we have $M_t\le \eps^h$.
    In conclusion, in either case, $(M_t, t<T)$ is uniformly bounded on the event $\{\gamma\subset\Omega^L\}$, and
    as $t\to T$, 
    we have $\varphi_t'(W_t) \to 1$ almost surely, so
    \begin{align*}
        M_t \; \to \; M_T := \exp\big(c\mu(\HH; \gamma[0,T], \HH\setminus U)\big) , \qquad \text{as }  t\to T .
    \end{align*}
    The assertion now follows by taking into account that $M_0 = \varphi'(0)^h$ and recalling the identity~\eqref{eqn::poisson_cov}.
\end{proof}

\subsection{A Crucial Coupling Result for SLEs}
\label{subsec::pre_sle_coupling}

We finish this preliminary section with a result from~\cite{WernerWuCLEtoSLE}, which says that we can construct $\SLE$s
from the Brownian loop soup and the Brownian excursion soup. This gives a coupling of $\SLE$s in two
Dobrushin domains $U \subset \Omega$,
which will be crucial in our proof of Theorem~\ref{thm::global_unique} (for Lemma~\ref{lem::sle_positivechance_stay}
for $\kappa \in [8/3,4]$).

\smallbreak

Let $(\Omega; x,y)$ be a Dobrushin domain. Let $\PL$ be a Brownian loop soup with intensity $c \mu(\Omega)$,
and $\PE$ a Brownian excursion soup with intensity $h \nu(\Omega; (y\, x))$, where $c = c(\kappa)$ and
$h = h(\kappa)$ are defined in~\eqref{eq: alpha and central charge}
and $\kappa\in [8/3,4]$.
(Note that for $\kappa\in [8/3,4]$, we have $c \in [0,1]$ and $h \in [1/4, 5/8]$.)

We say that two loops $\ell$ and $\ell'$ in $\PL$ belong to the same cluster if there exists a finite chain of loops
$\ell_0, \ldots, \ell_n$ in $\PL$ such that $\ell_0 = \ell$, $\ell_n = \ell'$, and $\ell_{j}\cap \ell_{j-1} \neq \emptyset$ for all $j \in \{1,\ldots, n\}$.
We denote by $\overline{\LC}$ the family of all closures of the loop-clusters and
by $\Gamma$ the family of all outer boundaries of the outermost elements of $\overline{\LC}$.
Then, $\Gamma$ forms a collection of disjoint simple loops, called the $\CLE_{\kappa}$ for $\kappa\in (8/3,4]$, see~\cite{SheffieldWernerCLE}.

Finally, we define $\gamma_0$ to be the right boundary of the union of all excursions $e\in\PE$ and
$\gamma$ 
the boundary of the union of $\gamma_0$ and all loops in $\Gamma$ that it intersects,
as illustrated in Figure~\ref{fig::crucialcoupling}.

\begin{figure}[h]
    \includegraphics[width=\textwidth]{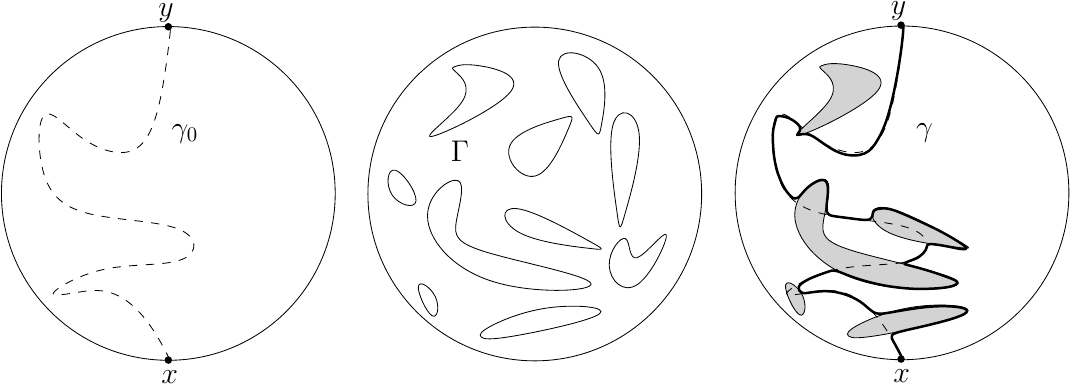}
    {\caption{In the left panel, $\gamma_0$ is the right boundary of all Brownian excursions in $\PE$.
            In the middle panel, $\Gamma$ is the family of all outer boundaries of the outermost elements of the clusters of Brownian loops in $\PL$.
            In the right panel, $\gamma$ is the right boundary of the union of $\gamma_0$ and all loops in $\Gamma$ that intersect $\gamma_0$.
            By~\cite[Theorem 1.1]{WernerWuCLEtoSLE}, we find that $\gamma\sim\PP(\Omega; x, y)$.}\label{fig::crucialcoupling}}
\end{figure}

\begin{lemma}\label{lem::sle_coupling}
    Let $\kappa\in [8/3,4]$. Let $(\Omega; x,y)$ be a Dobrushin domain
    and define $\PL$, $\PE$, $\Gamma$, $\gamma_0$, and $\gamma$ as above.
    Then, 
    $\gamma$ has the law of the $\SLE_{\kappa}$ in $\Omega$ connecting $x$ and $y$.
\end{lemma}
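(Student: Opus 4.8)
The construction of $\gamma$ from the two soups is manifestly conformally invariant, since both the Brownian loop measure $\mu(\Omega)$ and the Brownian excursion measure $\nu(\Omega;(yx))$ are conformally invariant, and the operations of forming clusters, taking outer boundaries, and taking right boundaries all commute with conformal maps. Hence the law of $\gamma$ is a conformally invariant probability measure on curves in $X_0(\Omega;x,y)$, and the whole task is to match it with $\PP(\Omega;x,y)$. The plan is to first show that the law of $\gamma$ obeys exactly the boundary-perturbation (restriction) relation of Lemma~\ref{lem::sle_boundary_perturbation} under passage to a Dobrushin subdomain, and then to upgrade this, via conformal invariance and the domain Markov property, to an identification with $\PP(\Omega;x,y)$. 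The boundary case $\kappa=8/3$ (where $c=0$, so $\PL=\emptyset$ and $\gamma=\gamma_0$ is the right boundary of the excursion soup alone) serves as a base case and recovers the classical restriction-measure description of $\SLE_{8/3}$.

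The heart of the argument is a restriction computation for a Dobrushin subdomain $U\subset\Omega$ agreeing with $\Omega$ near $x$ and $y$. Building $\PL$ and $\PE$ in $\Omega$ and restricting to the event $\{\gamma\subset U\}$, I would show that the conditional law of $\gamma$ coincides with the law of the curve produced by the same construction run with soups in $U$, and that the Radon--Nikodym derivative factors into two contributions. The excursion soup contributes the event that every excursion stays in $U$: by the restriction property \eqref{eqn::brownianexcursion_restriction} and \eqref{eq: ratio of Poisson kernels}, for the intensity $h\,\nu(\Omega;(yx))$ this event has weight $(H_U(x,y)/H_\Omega(x,y))^h$, giving the factor $(H_\Omega/H_U)^h$ in the derivative. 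The loop soup contributes through the restriction property of $\mu$ together with the Sheffield--Werner description of $\Gamma$ as $\CLE_\kappa$: the loops contained in $U$ reproduce $\CLE_\kappa$ in $U$, whereas the loops that meet $\gamma$ and exit $U$ are precisely those counted by $\mu(\Omega;\gamma,\Omega\setminus U)$, yielding the weight $\exp(c\,\mu(\Omega;\gamma,\Omega\setminus U))$. Multiplying the two contributions and using the conformal covariance \eqref{eqn::poisson_cov} of the Poisson kernel produces exactly the expression of Lemma~\ref{lem::sle_boundary_perturbation}.

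With the boundary-perturbation relation established, I would conclude by checking the domain Markov property. Conditioning on an initial segment $\gamma[0,t]$ and applying the restriction/Markov properties of both soups in the slit domain should show that the remainder of $\gamma$ arises from the same construction in $\Omega\setminus\gamma[0,t]$; together with conformal invariance this is precisely the domain Markov property, so Schramm's characterization identifies $\gamma$ with $\SLE_\kappa$. The main obstacle is the loop--excursion interaction underlying the restriction step: one must justify that the outer-boundary construction commutes with restriction to $U$, controlling simultaneously the excursions and the (possibly many) loop clusters that $\gamma$ touches, and in particular verifying that conditioning on $\{\gamma\subset U\}$ cleanly decouples the excursion contribution from the loop contribution. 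Handling this rigorously requires the fine structure of the coupling between the Brownian loop soup, the $\CLE_\kappa$ carpet, and the forced excursion path, which is the substantive content of \cite[Theorem~1.1]{WernerWuCLEtoSLE} that we invoke here.
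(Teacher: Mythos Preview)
The paper itself does not prove this lemma: it is stated as a direct citation of \cite[Theorem~1.1]{WernerWuCLEtoSLE}, with no argument given. So there is no in-paper proof to compare against, only the original Werner--Wu argument.

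Your restriction computation is essentially right and is indeed one of the ingredients in \cite{WernerWuCLEtoSLE}: the excursion soup contributes the Poisson-kernel factor and the loop soup contributes the $\exp(c\mu(\Omega;\gamma,\Omega\setminus U))$ weight, matching Lemma~\ref{lem::sle_boundary_perturbation}. However, the step where your outline breaks down is the domain Markov claim. Conditioning on an initial segment $\gamma[0,t]$ does \emph{not} reduce the two soups to independent soups in the slit domain: Brownian loops may cross $\gamma[0,t]$ many times, and the information revealed by $\gamma[0,t]$ about which loop clusters it has touched is highly nonlocal. The loop soup has no spatial Markov property across a curve, so ``applying the restriction/Markov properties of both soups in the slit domain'' is not a legitimate move, and the gap you flag in your last sentence is exactly the whole theorem.

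For context, the actual proof in \cite{WernerWuCLEtoSLE} does not attempt a direct domain-Markov verification. It instead identifies $\gamma_0$ (the right boundary of the excursion soup alone) with an $\SLE_{8/3}(\rho)$ process via the Lawler--Schramm--Werner classification of one-sided restriction measures, and then uses the Sheffield--Werner $\CLE_\kappa$ exploration machinery to show that decorating $\gamma_0$ by the $\CLE_\kappa$ loops it meets transforms $\SLE_{8/3}(\rho)$ into $\SLE_\kappa(\rho')$, with the parameter matching yielding chordal $\SLE_\kappa$ for the specific intensity $h$. This route avoids ever needing a Markov property for the joint soup conditioned on a partial curve.
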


\begin{proof}
    When $\kappa=8/3$, the curve $\gamma$ is the same as $\gamma_0$,
    and it satisfies the so-called one-sided restriction property,
    which uniquely identifies its law with the $\SLE_{8/3}$
    by~\cite[Theorem~8.4]{LawlerSchrammWernerConformalRestriction} and~\cite[Theorem~8]{WernerConformalRestrictionRelated}.
    For $\kappa\in (8/3,4]$, the assertion was proved in~\cite[Theorem~1.1]{WernerWuCLEtoSLE}.
\end{proof}

From Lemma~\ref{lem::sle_coupling}, we see that $\SLE_{\kappa}$ curves in different domains can be coupled in the following way.
Let  $U \subset \Omega$ by a Dobrushin subdomain 
that agrees with $\Omega$ in a neighborhood of the arc $(y\, x)$.
Take $\PL$, $\PE$, $\Gamma$, $\gamma_0$, and $\gamma$ as in the above lemma.
Let $\PE_{U}$ and $\PL_{U}$ respectively be the collections of excursions in $\PE$ and loops in $\PL$ that are contained in $U$.
Define $\eta_0$ to be the right boundary of the union of all excursions $e\in\PE_{U}$,
define $\Gamma_U$ to be the collection of all outer boundaries of the outermost clusters of $\PL_{U}$,
and $\eta$ to be the right boundary of the union of $\eta_0$ and all loops in $\Gamma_U$ that it intersects.

\begin{corollary} \label{cor::sle_coupling}
    Let $\kappa\in [8/3,4]$.
    Let $(\Omega; x, y)$ be a Dobrushin domain and $U \subset \Omega$ a Dobrushin subdomain
    that agrees with $\Omega$ in a neighborhood of the arc $(y\, x)$.
    There exists a coupling $(\gamma, \eta)$ of $\gamma \sim \PP(\Omega; x,y)$ and
    $\eta \sim \PP(U; x,y)$ such that, almost surely, $\eta$ stays to the left of $\gamma$ and
    \[ \PP[\eta = \gamma] = \PP[\gamma \subset U] . \]
\end{corollary}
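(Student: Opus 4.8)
The plan is to realize both curves from a single sample of the loop and excursion soups and then read off the three assertions from the geometry of the construction; throughout we work in the range $\kappa\in[8/3,4]$ where Lemma~\ref{lem::sle_coupling} applies. Fix a Brownian loop soup $\PL$ with intensity $c\mu(\Omega)$ and a Brownian excursion soup $\PE$ with intensity $h\nu(\Omega;(yx))$, and build $\gamma_0$, $\Gamma$, $\gamma$ exactly as in Lemma~\ref{lem::sle_coupling}, so that $\gamma\sim\PP(\Omega;x,y)$. The coupling is obtained by feeding the \emph{same} soups into the recipe run inside $U$: restrict to the subcollections $\PE_U\subseteq\PE$ and $\PL_U\subseteq\PL$ of excursions and loops contained in $U$, and form $\eta_0$, $\Gamma_U$, $\eta$ from them as in the statement.

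To see that $\eta\sim\PP(U;x,y)$, I would invoke the restriction properties. A Poisson point process restricted to a measurable family of configurations is again Poisson with the restricted intensity; hence by~\eqref{eqn::brownianexcursion_restriction} the thinned soup $\PE_U$ is a Brownian excursion soup in $U$ with intensity $h\nu(U;(yx))$, and by the analogous restriction property of $\mu$ the soup $\PL_U$ is a Brownian loop soup in $U$ with intensity $c\mu(U)$. Since $U$ is a Dobrushin subdomain agreeing with $\Omega$ in a neighborhood of $(yx)$, the hypotheses of Lemma~\ref{lem::sle_coupling} hold for $(U;x,y)$ with these soups, and therefore $\eta\sim\PP(U;x,y)$.

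The remaining assertions I would derive from a deterministic, configuration-wise comparison. Because $\PE_U\subseteq\PE$ and $\PL_U\subseteq\PL$, every excursion lies (weakly) to the left of $\gamma_0$, the clusters of $\PL_U$ refine those of $\PL$, and removing excursions and loops can only move the right boundary of their union leftward; this should give that $\eta$ stays to the left of $\gamma$. Making this monotonicity precise is the main obstacle: one must show that the filled region generated inside $U$ is contained in the one generated in $\Omega$, which requires tracking how the outermost-cluster boundaries in $\Gamma_U$ sit inside those of $\Gamma$ and how the step ``absorb the loops of $\Gamma$ meeting $\gamma_0$'' behaves under thinning of the loop soup.

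Finally, for $\PP[\eta=\gamma]=\PP[\gamma\subset U]$ I would show the two events coincide up to a null set. On $\{\gamma\subset U\}$, the region $D$ to the left of $\gamma$ is bounded by $\gamma$ and the arc $(yx)$; since $\gamma\subset U$, $U$ is simply connected, and $U$ agrees with $\Omega$ near $(yx)$, one gets $D\subseteq U$. As all excursions lie to the left of $\gamma_0\subseteq\gamma$, and the loops of $\Gamma$ meeting $\gamma_0$ lie in $D$, these generating objects are already contained in $U$; hence $\eta_0=\gamma_0$, the relevant clusters are unchanged, and $\eta=\gamma$. Conversely, since $\eta\sim\PP(U;x,y)$ is a simple curve meeting $\partial U$ only at $x,y$, we have $\eta\subset U$ almost surely, so $\eta=\gamma$ forces $\gamma\subset U$. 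Thus $\{\eta=\gamma\}=\{\gamma\subset U\}$ almost surely, and in particular the probabilities agree.
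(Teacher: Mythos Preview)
Your approach is precisely the paper's: the proof there is a single sentence pointing to Lemma~\ref{lem::sle_coupling} and the restriction construction in the preceding paragraph, and you have simply spelled out the details left implicit. The point you flag as the ``main obstacle'' (the configuration-wise monotonicity of the right boundary under thinning of the soups) is indeed the substantive content, and the paper does not prove it either; once that is granted, your argument that $\{\eta=\gamma\}=\{\gamma\subset U\}$ almost surely is correct.
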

\begin{proof}
    Lemma~\ref{lem::sle_coupling} and the above paragraph give the sought coupling.
\end{proof}

\begin{remark}
    The coupling $(\gamma, \eta)$ of Corollary~\ref{cor::sle_coupling} is the one which maximizes the probability $\PP[\eta = \gamma]$.
\end{remark}

\section{Global Multiple SLEs}
\label{sec::global}
This section concerns the existence and uniqueness of global multiple $\SLE_\kappa$ measures for $\kappa \in (0,4]$.
Such global $N$-$\SLE$s associated to all link patterns $\alpha \in \LP_N$ and all $\kappa \in (0,4]$ have been 
constructed in~\cite{KozdronLawlerMultipleSLEs, LawlerPartitionFunctionsSLE, PeltolaWuGlobalMultipleSLEs}.
In Section~\ref{subsec::existence}, we briefly recall this construction, 
which immediately gives the existence part of Theorem~\ref{thm::global_unique}.
We prove the uniqueness part of  Theorem~\ref{thm::global_unique} in Sections~\ref{subsec::uniqueness_pair} and~\ref{subsec::uniqueness_general}.

\subsection{Construction of Global Multiple $\SLE$s for $\kappa \leq 4$}
\label{subsec::existence}

Fix a polygon $(\Omega; x_1, \ldots, x_{2N})$. 
For a link pattern $\alpha = \{ \link{a_1}{b_1}, \ldots, \link{a_N}{b_N} \} \in \LP_N$,  we let $\PP_{\alpha}$ denote
the product measure of $N$ independent chordal $\SLE_\kappa$ curves, 
\begin{align*}
    \PP_{\alpha}
    := \bigotimes_{j = 1}^{N} \PP(\Omega; x_{a_j}, x_{b_j}) ,
\end{align*}
and  $\E_{\alpha}$  the expectation with respect to $\PP_{\alpha}$.
A global $N$-$\SLE_{\kappa}$ associated to $\alpha$ can be constructed as the probability measure
$\QQ_{\alpha}^{\#} = \QQ_{\alpha}^{\#}(\Omega; x_1, \ldots, x_{2N})$
which is absolutely continuous with respect to $\PP_{\alpha}$
with explicit Radon-Nikodym derivative given in~\eqref{eqn::def_radon_alpha} below.
This formula involves a combinatorial expression $m_{\alpha}$ of Brownian loop measures, obtained by an inclusion-exclusion procedure that depends on $\alpha$.
More precisely, for a configuration $(\eta_1, \ldots, \eta_N) \in X_0^{\alpha}(\Omega; x_1, \ldots, x_{2N})$,
we define
\begin{align*} 
    m_{\alpha}(\Omega; \eta_1, \ldots, \eta_N) :=
    \sum_{\text{c.c. } \LC \text{ of } \Omega \setminus \{\eta_1, \ldots, \eta_N\} } m(\LC) ,
\end{align*}
where the sum is over all the connected components (c.c.) of the complement of the curves, and
\begin{align*}
    m(\LC)
    := & \; \sum_{\substack{i_1,i_2 \in \LB(\LC), \\ i_1\neq i_2}}
    \mu(\Omega; \eta_{i_1}, \eta_{i_2})
    - \sum_{\substack{i_1, i_2, i_3 \in \LB(\LC), \\ i_1 \neq i_2 \neq i_3 \neq i_1}}
    \mu(\Omega; \eta_{i_1}, \eta_{i_2}, \eta_{i_3})
    \\ & \;
    + \cdots + (-1)^p \mu(\Omega; \eta_{j_1}, \ldots, \eta_{j_p})
\end{align*}
is a combinatorial expression associated to the c.c. $\LC$, where
\[ \LB(\LC) := \{j\in \{1,\ldots,N\} \colon \eta_j \subset \partial\LC\} = \{j_1, \ldots, j_p\} \]
denotes the set of indices $j$ for which the curve $\eta_j$ is a part of the boundary of $\LC$.
Now, we define the probability measure $\QQ_{\alpha}^{\#}$ via
\begin{align}
    \label{eqn::def_radon_alpha}
    \frac{\ud \QQ_{\alpha}^{\#}}{\ud \PP_{\alpha}} (\eta_1,\ldots, \eta_N)
    =  & \;
    \frac{R_{\alpha}(\Omega; \eta_1, \ldots, \eta_N) }{\E_{\alpha} [R_{\alpha}(\Omega; \eta_1,\ldots, \eta_N)]}, \\
    \text{where} \qquad
    R_{\alpha}(\Omega; \eta_1, \ldots, \eta_N)
    := & \;  \one_{\{\eta_j\cap\eta_k=\emptyset \; \forall \; j\neq k\}}
    \exp \big(c m_{\alpha}(\Omega; \eta_1, \ldots, \eta_N) \big).
    \nonumber
\end{align}
By~\cite[Proposition~3.3]{PeltolaWuGlobalMultipleSLEs},
this measure satisfies the defining property of a global multiple $\SLE_\kappa$,
stated in Definition~\ref{def::global_multiple_sle}.
Also, as observed in~\cite[Equation~(3.6)]{PeltolaWuGlobalMultipleSLEs},
the expectation of $R_{\alpha}$ defines a conformally invariant and bounded function of the marked boundary points:
\begin{align*} 
    0 < f_{\alpha}(\Omega; x_1, \ldots, x_{2N}) := \E_{\alpha}[R_{\alpha}(\Omega; \eta_1, \ldots, \eta_N)] \le 1 .
\end{align*}

If $(\Omega; x_1, \ldots, x_{2N})$ is a polygon and $U\subset\Omega$ a simply connected subdomain 
that agrees with $\Omega$ in neighborhoods of $x_1, \ldots, x_{2N}$, we say that $U$ is a \textit{sub-polygon} of $\Omega$.
When the marked points $x_1, \ldots, x_{2N}$ lie on sufficiently regular
boundary segments of $\Omega$,
we may define, for all $\alpha \in \LP_N$, the functions
\begin{align}\label{eqn::purepartition_alpha_def}
    \PartF_{\alpha}(\Omega; x_1, \ldots, x_{2N}) :=
    f_{\alpha}(\Omega; x_1, \ldots , x_{2N}) \; \prod_{\link{a}{b} \in \alpha} H_{\Omega}(x_{a}, x_{b})^{h},
\end{align}
where
$H_{\Omega}$ is the boundary Poisson kernel introduced in Section~\ref{subsec::pre_bebl}.
Since $0<f_{\alpha}\le 1$, we see that
\begin{align}\label{eqn::purepartition_bounded}
    0<\PartF_{\alpha}(\Omega; x_1,\ldots, x_{2N})\le \prod_{\link{a}{b} \in \alpha} H_{\Omega}(x_{a}, x_{b})^h.
\end{align}
The functions $\PartF_{\alpha}$ are called \textit{pure partition functions} of multiple $\SLE$s.
Explicit formulas for them have been obtained when
$\kappa=2$~\cite[Theorem~4.1]{KarrilaKytolaPeltolaCorrelationsLERWUST}
and $\kappa=4$~\cite[Theorem~1.5]{PeltolaWuGlobalMultipleSLEs}.
For other values of $\kappa \in (0,8)$,
formulas in (complicated) integral form have been found
in~\cite{FloresKlebanSolutionSpacePDE4, KytolaPeltolaPurePartitionSLE}.

\smallbreak

The multiple $\SLE$ probability measure $\QQ^{\#}_{\alpha}$ has a useful boundary perturbation property.
It serves as an analogue of Lemma~\ref{lem::sle_boundary_perturbation}
in our proof of Theorem~\ref{thm::global_unique}.

\begin{proposition}\label{prop::multiplesle_boundary_perturbation}
    \textnormal{\cite[Proposition~3.4]{PeltolaWuGlobalMultipleSLEs}}
    Let $\kappa \in (0,4]$ be fixed, and let $(\Omega; x_1, \ldots, x_{2N})$ be a polygon and $U \subset \Omega$ a sub-polygon.
    Then, the probability measure $\QQ_{\alpha}^{\#}(U; x_1, \ldots, x_{2N})$ is absolutely continuous with respect to
    $\QQ_{\alpha}^{\#}(\Omega; x_1, \ldots, x_{2N})$, with Radon-Nikodym derivative
    \begin{align*}
        \; & \frac{\ud \QQ_{\alpha}^{\#}(U; x_1, \ldots, x_{2N})}{\ud \QQ_{\alpha}^{\#}(\Omega; x_1, \ldots, x_{2N})}
        (\eta_1, \ldots, \eta_N)
   =     
        \frac{\PartF_{\alpha}(\Omega; x_1, \ldots, x_{2N})}{\PartF_{\alpha}(U; x_1, \ldots, x_{2N})}
        \; \one_{\{\eta_j\subset U \; \forall \; j\}} \; \exp \bigg(c\mu \Big(\Omega; \Omega\setminus U, \bigcup_{j=1}^N \eta_j \Big) \bigg).
    \end{align*}
    Moreover, if $\kappa\le 8/3$ and $x_1, \ldots, x_{2N}$ lie on sufficiently regular
    boundary segments of $\Omega$, then
    we have
    \begin{align} \label{eqn::partitionfunction_mono}
        \PartF_{\alpha}(\Omega; x_1, \ldots, x_{2N}) \geq \PartF_{\alpha}(U; x_1, \ldots, x_{2N}).
    \end{align}
\end{proposition}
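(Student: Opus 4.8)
The plan is to obtain the Radon--Nikodym derivative by the chain rule through the product measures $\PP_{\alpha}$, thereby reducing everything to the single-curve boundary perturbation of Lemma~\ref{lem::sle_boundary_perturbation} together with the restriction property of the Brownian loop measure. Since $\QQ_{\alpha}^{\#}(\Omega; x_1, \ldots, x_{2N})$ has density $R_{\alpha}(\Omega; \cdot)/f_{\alpha}(\Omega)$ with respect to $\PP_{\alpha}(\Omega; \cdot)$ by~\eqref{eqn::def_radon_alpha}, and likewise in $U$, I would write, on the support of $\QQ_{\alpha}^{\#}(U)$ where $R_{\alpha}(\Omega; \cdot) = \exp(c\, m_{\alpha}(\Omega; \cdot)) > 0$,
\[
\frac{\ud \QQ_{\alpha}^{\#}(U)}{\ud \QQ_{\alpha}^{\#}(\Omega)}
= \frac{f_{\alpha}(\Omega)}{f_{\alpha}(U)} \cdot \frac{R_{\alpha}(U; \cdot)}{R_{\alpha}(\Omega; \cdot)} \cdot \frac{\ud \PP_{\alpha}(U)}{\ud \PP_{\alpha}(\Omega)} .
\]
Because the $N$ curves are independent under $\PP_{\alpha}$, the last factor is the product of the $N$ single-curve densities from Lemma~\ref{lem::sle_boundary_perturbation}, namely $\prod_{j} (H_{\Omega}(x_{a_j}, x_{b_j})/H_{U}(x_{a_j}, x_{b_j}))^{h} \, \one_{\{\eta_j \subset U \; \forall j\}} \exp(c \sum_{j} \mu(\Omega; \eta_j, \Omega \setminus U))$.

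Next I would assemble the factors. The boundary-Poisson-kernel factors combine with $f_{\alpha}(\Omega)/f_{\alpha}(U)$ to give precisely $\PartF_{\alpha}(\Omega)/\PartF_{\alpha}(U)$ by the definition~\eqref{eqn::purepartition_alpha_def} of the pure partition function, and the indicators collapse to $\one_{\{\eta_j \subset U \; \forall j\}}$. It then remains to identify the total loop-measure exponent, i.e.\ to establish
\[
m_{\alpha}(U; \eta_1, \ldots, \eta_N) - m_{\alpha}(\Omega; \eta_1, \ldots, \eta_N) + \sum_{j=1}^{N} \mu(\Omega; \eta_j, \Omega \setminus U) = \mu\Big(\Omega; \Omega \setminus U, \textstyle\bigcup_{j=1}^{N} \eta_j\Big) .
\]
This is the combinatorial heart of the proof. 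I would prove it using the restriction property of the Brownian loop measure, which on the event $\{\eta_j \subset U\}$ gives $\mu(\Omega; \eta_{i_1}, \ldots, \eta_{i_k}) - \mu(U; \eta_{i_1}, \ldots, \eta_{i_k}) = \mu(\Omega; \eta_{i_1}, \ldots, \eta_{i_k}, \Omega \setminus U)$ for every index subset, followed by an inclusion--exclusion identity: writing $A_j$ for the loops meeting $\eta_j$ and $B$ for those meeting $\Omega \setminus U$, one has $\sum_{j} \mu(A_j \cap B) - \mu(\bigcup_{j} A_j \cap B) = \sum_{|S| \geq 2} (-1)^{|S|} \mu(\Omega; (\eta_i)_{i \in S}, \Omega \setminus U)$, which must be matched against the alternating sums $\sum_{k \geq 2} (-1)^{k} \sum_{|S|=k}$ built into the definition~\eqref{eqn::def_malpha} of $m_{\alpha}$. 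The main obstacle is precisely this matching: one must track which subsets $S$ of curves occur in the per-component expansion $m(\LC)$, and with what multiplicity, when passing from the connected components of $U \setminus \bigcup_{j} \eta_j$ to those of $\Omega \setminus \bigcup_{j} \eta_j$, and verify that each multi-curve term $\mu(\Omega; (\eta_i)_{i \in S}, \Omega \setminus U)$ survives exactly once with the correct sign. This is exactly what the inclusion--exclusion structure of $m_{\alpha}$ is engineered to guarantee, and the identity is the content of the proof of~\cite[Proposition~3.4]{PeltolaWuGlobalMultipleSLEs}, which I would follow.

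Finally, I would deduce the monotonicity~\eqref{eqn::partitionfunction_mono} directly from the first part. Since $\QQ_{\alpha}^{\#}(U)$ is a probability measure supported on $\{\eta_j \subset U \; \forall j\}$, integrating the Radon--Nikodym derivative against $\QQ_{\alpha}^{\#}(\Omega)$ yields
\[
\frac{\PartF_{\alpha}(U)}{\PartF_{\alpha}(\Omega)}
= \E_{\QQ_{\alpha}^{\#}(\Omega)}\Big[ \one_{\{\eta_j \subset U \; \forall j\}} \exp\Big(c\, \mu\Big(\Omega; \Omega \setminus U, \textstyle\bigcup_{j} \eta_j\Big)\Big)\Big] .
\]
When $\kappa \leq 8/3$ we have $c = (3\kappa-8)(6-\kappa)/(2\kappa) \leq 0$, while $\mu(\Omega; \Omega \setminus U, \bigcup_{j} \eta_j) \geq 0$; hence the integrand is bounded above by $1$, so the expectation is at most $1$, and therefore $\PartF_{\alpha}(\Omega) \geq \PartF_{\alpha}(U)$, as claimed.
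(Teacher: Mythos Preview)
The paper does not prove this proposition: it is stated with a citation to~\cite[Proposition~3.4]{PeltolaWuGlobalMultipleSLEs} and no argument is given here. Your outline is the natural and correct route, and it matches what the cited reference does: pass through the product measures $\PP_{\alpha}$ via the chain rule, apply Lemma~\ref{lem::sle_boundary_perturbation} curve by curve, absorb the Poisson-kernel factors together with $f_{\alpha}(\Omega)/f_{\alpha}(U)$ into the ratio $\PartF_{\alpha}(\Omega)/\PartF_{\alpha}(U)$ using~\eqref{eqn::purepartition_alpha_def}, and then verify the loop-measure identity
\[
m_{\alpha}(U;\eta_1,\ldots,\eta_N) - m_{\alpha}(\Omega;\eta_1,\ldots,\eta_N) + \sum_{j=1}^{N}\mu(\Omega;\eta_j,\Omega\setminus U) \;=\; \mu\Big(\Omega;\Omega\setminus U,\;\textstyle\bigcup_{j}\eta_j\Big),
\]
which is precisely the inclusion--exclusion computation carried out in~\cite{PeltolaWuGlobalMultipleSLEs}. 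You correctly identify this last identity as the combinatorial heart and defer to that reference, just as the present paper does. Your derivation of the monotonicity~\eqref{eqn::partitionfunction_mono} by integrating the Radon--Nikodym derivative and using $c\le 0$ for $\kappa\le 8/3$ is also correct.
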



\subsection{Uniqueness for a Pair of Commuting $\SLE$s}
\label{subsec::uniqueness_pair}

Next, we prove that the global $2$-$\SLE_\kappa$ measures are unique.
This result was proved by J.~Miller and S.~Sheffield~\cite[Theorem~4.1]{MillerSheffieldIG2} using
a coupling of the $\SLE$s with the Gaussian free field ($\GFF$). We present another proof not using this coupling.
Our proof also generalizes to the case of $N \geq 3 $ commuting $\SLE$ curves, whereas couplings with the $\GFF$
seem not to be useful in that case.

\smallbreak

In this section, we focus on polygons with $N=2$. We call such a polygon $(\Omega; x_1, x_2, x_3, x_4)$ a \textit{quad}.
Because the two connectivities $\alpha \in \LP_2$ of
the curves are obtained from each other by a cyclic change of labeling of the marked boundary points, 
we may without loss of generality consider global $2$-$\SLE$s associated to 
$\alpha = \{\link{1}{4}, \link{2}{3}\}$.
Hence, throughout this section we consider pairs $(\eta^L,\eta^R)$ of simple curves such that
$\eta^L\in X_0(\Omega; x^L, y^L)$ and $\eta^R\in X_0(\Omega; x^R, y^R)$, with $\eta^L\cap\eta^R=\emptyset$.
We denote the space of such pairs by $X_0(\Omega; x^L, x^R, y^R, y^L)$.
Now, a probability measure 
on these pairs $(\eta^L,\eta^R)$
of curves is a global $2$-$\SLE_\kappa$
if the conditional law of $\eta^L$ given $\eta^R$ is that of the $\SLE_{\kappa}$ connecting $x^L$ and $y^L$
in the connected component of $\Omega \setminus \eta^R$ containing $x^L$ and $y^L$ on its boundary,
and vice versa. 

\begin{proposition}\label{prop::slepair_unique}
    For any $\kappa\in (0,4]$, there exists a unique global $2$-$\SLE_\kappa$
    on $X_0(\Omega; x^L, x^R, y^R, y^L)$.
\end{proposition}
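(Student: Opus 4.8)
The plan is to prove uniqueness via a coupling and contraction argument, exploiting the conditional-law characterization in Definition~\ref{def::global_multiple_sle}. Suppose $\mu$ and $\tilde\mu$ are two global $2$-$\SLE_\kappa$ measures on $X_0(\Omega; x^L, x^R, y^R, y^L)$. The idea is to build a \emph{Markov chain} on pairs of curves that resamples one curve at a time from its conditional law (an $\SLE$ in the complement of the other), and to show that this chain has a unique stationary measure. Concretely, I would set up the following resampling dynamics: given a configuration $(\eta^L, \eta^R)$, resample $\eta^L$ as a chordal $\SLE_\kappa$ in the component of $\Omega\setminus\eta^R$ with $x^L,y^L$ on its boundary, then resample $\eta^R$ given the new $\eta^L$, and iterate. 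Both $\mu$ and $\tilde\mu$ are stationary for this chain, since being a global $2$-$\SLE$ is precisely the statement of invariance under each resampling step. Hence it suffices to prove that the chain is ergodic with a unique invariant law.

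First I would run the chain from two arbitrary starting configurations, one distributed as $\mu$ and one as $\tilde\mu$, and attempt to \emph{couple} them so that after finitely many (or in the limit, infinitely many) steps the two configurations coincide with high probability. The key monotonicity input is the ordering of curves: since $\alpha=\{\link{1}{4},\link{2}{3}\}$, the curve $\eta^L$ lies to the left of $\eta^R$, and enlarging the domain available to a resampled curve pushes it monotonically. This is where Corollary~\ref{cor::sle_coupling} enters decisively: it provides a coupling of $\SLE_\kappa$ in a domain $\Omega$ and in a subdomain $U\subset\Omega$ such that the curve in $U$ stays to one side of the curve in $\Omega$, and, crucially,
\[
\PP[\eta = \gamma] = \PP[\gamma \subset U].
\]
Using this, I would couple the two copies of the resampling chain monotonically: at each resampling step, the two configurations are ordered, and the probability that the resampled curves \emph{agree} is bounded below by the probability that the $\SLE$ in the larger available region stays inside the smaller one. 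Because the curves live in a bounded quad and the conditioning domains are nondegenerate (the opposite curve stays at positive distance from the relevant boundary arcs), this agreement probability is uniformly bounded below, which drives the coupled chains to coalesce.

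The heart of the argument, and the step I expect to be the \textbf{main obstacle}, is establishing that the per-step coalescence probability is bounded \emph{away from zero uniformly}, so that the two chains meet almost surely. The difficulty is that the conditioning curve $\eta^R$ (resp.\ $\eta^L$) can in principle come arbitrarily close to the boundary arc that the opposite curve needs to avoid, making the relevant subdomain $U$ degenerate and the agreement probability $\PP[\gamma\subset U]$ collapse to zero. To control this, I would need a priori estimates guaranteeing that with positive probability the curves stay in a ``nice'' region bounded away from the obstacles — this is presumably where the boundary perturbation property (Lemma~\ref{lem::sle_boundary_perturbation}) and the explicit Radon--Nikodym derivative involving $H_\Omega/H_U$ and the Brownian loop measure give quantitative control, allowing one to compare the conditional laws across the two chains and lower-bound the overlap. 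Once a uniform lower bound on coalescence is in hand, a standard coupling-convergence argument shows the chain forgets its initial condition, so $\mu=\tilde\mu$.

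Finally, I would verify the measurability and regularity needed to make the resampling chain well-defined on the metric space $(X,d)$ — in particular that conditional laws depend measurably on the conditioning curve and that the monotone coupling can be realized jointly — and conclude that the unique stationary law, which exists by the already-known existence part (the measure $\QQ_\alpha^\#$ from Section~\ref{subsec::existence}), is the unique global $2$-$\SLE_\kappa$.
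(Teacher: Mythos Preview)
Your overall architecture is exactly the paper's: a resampling Markov chain, stationarity of any global $2$-$\SLE_\kappa$, and a coupling argument showing the chain forgets its initial state via a uniform-in-the-configuration lower bound on coalescence probability. You also correctly identify the central difficulty, namely that the conditioning curve can crowd the opposite arc and make the relevant subdomain degenerate.

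The gap is in the tool you select for the coupling step. You lean on Corollary~\ref{cor::sle_coupling} to produce a monotone coupling and use $\PP[\eta=\gamma]=\PP[\gamma\subset U]$ as your coalescence bound. But that corollary comes from the loop-soup/excursion construction of Lemma~\ref{lem::sle_coupling}, which is only available for $\kappa\in[8/3,4]$; for $\kappa\in(0,8/3)$ there is no monotone coupling of $\SLE_\kappa$ in nested domains, so your scheme as stated does not cover the full range $(0,4]$. The paper therefore does \emph{not} build the coupling on monotonicity. Instead it proceeds in two phases. First, Lemma~\ref{lem::sle_positivechance_stay} shows that an $\SLE_\kappa$ in \emph{any} Dobrushin subdomain $U\subset\Omega$ lands in a fixed $\Omega^L$ with probability at least $\theta(\Omega,\Omega^L)>0$ uniformly in $U$; the proof splits at $\kappa=8/3$, using Corollary~\ref{cor::sle_coupling} above and a direct Poisson-kernel computation via Lemma~\ref{lem::sle_boundary_perturbation} and~\eqref{eqn::poissonkernel_mono2} below. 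Two resampling steps then force both chains' curves into fixed thirds $\Omega^L,\Omega^R$ of the rectangle with uniformly positive probability. Second, once both pairs sit in these fixed regions, Lemma~\ref{lem::sle_positivechance_coincide} couples the two resampled curves by a \emph{total-variation} argument: the Radon--Nikodym derivative between $\PP(U;x,y)$ and $\PP(\tilde U;x,y)$ restricted to $\{\eta\subset\Omega^L\}$ is bounded below by a constant $\eps(\Omega,\Omega^L,V)$ independent of $U,\tilde U$, which yields a maximal coupling with $\PP[\eta=\tilde\eta\subset\Omega^L]\ge p_0\eps$. Your gesture toward Lemma~\ref{lem::sle_boundary_perturbation} is in the right direction, but the missing idea is this two-phase structure (drive into fixed subdomains, then total-variation couple), which is what makes the bounds uniform and works for all $\kappa\in(0,4]$.
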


\begin{corollary} \label{cor::slepair_unique}
    Let $\kappa\in (0,4]$.
    For any $\alpha \in \LP_2$, there exists a unique global $2$-$\SLE_\kappa$ associated to $\alpha$.
\end{corollary}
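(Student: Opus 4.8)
The plan is to deduce the corollary directly from Proposition~\ref{prop::slepair_unique}, exploiting the fact that $\LP_2$ contains only two link patterns and that both are covered by the proposition via the cyclic symmetry already noted at the beginning of this section. First I would recall that a planar (non-crossing) pair partition of $\{1,2,3,4\}$ is either $\{\link{1}{2}, \link{3}{4}\}$ or $\{\link{1}{4}, \link{2}{3}\}$, the partition $\{\link{1}{3}, \link{2}{4}\}$ being non-planar, so that $\LP_2 = \{\, \{\link{1}{2}, \link{3}{4}\}, \, \{\link{1}{4}, \link{2}{3}\} \,\}$. The existence of a global $2$-$\SLE_\kappa$ associated to either pattern is furnished by the construction of Section~\ref{subsec::existence}: the measure $\QQ_{\alpha}^{\#}$ defined through the Radon--Nikodym derivative~\eqref{eqn::def_radon_alpha} satisfies the defining conditional-law property, so it remains only to establish uniqueness.

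For the pattern $\alpha = \{\link{1}{4}, \link{2}{3}\}$ I would make the identification $(x^L, x^R, y^R, y^L) = (x_1, x_2, x_3, x_4)$, under which $\eta^L$ connects $x_1$ and $x_4$ and $\eta^R$ connects $x_2$ and $x_3$. With this identification, a global $2$-$\SLE_\kappa$ associated to $\alpha$ is exactly a global $2$-$\SLE_\kappa$ on $X_0(\Omega; x^L, x^R, y^R, y^L)$ in the sense of Proposition~\ref{prop::slepair_unique}, so uniqueness is immediate from that proposition.

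For the remaining pattern $\alpha' = \{\link{1}{2}, \link{3}{4}\}$, I would use the cyclic relabeling $x_i \mapsto x_{i+1}$ (indices modulo $4$), which carries $\alpha'$ to $\alpha$. Since the defining property in Definition~\ref{def::global_multiple_sle} refers only to the curves and to which marked points serve as their endpoints, and is manifestly unchanged by permuting the labels of those points, this relabeling gives a bijection between global $2$-$\SLE_\kappa$ measures associated to $\alpha'$ and those associated to $\alpha$. Uniqueness for $\alpha$ then transfers to $\alpha'$, completing the argument. I do not anticipate any genuine obstacle here: the corollary is essentially a restatement of Proposition~\ref{prop::slepair_unique} combined with the two-element structure of $\LP_2$ and the cyclic symmetry of the polygon $(\Omega; x_1, x_2, x_3, x_4)$.
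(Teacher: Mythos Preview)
Your argument is correct and follows essentially the same approach as the paper: reduce both elements of $\LP_2$ to Proposition~\ref{prop::slepair_unique} via the cyclic relabeling symmetry of the marked points. The paper's proof is just a terser version of what you wrote.
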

\begin{proof}
    The two
    connectivities $\alpha \in \LP_2$ of the curves are obtained from each other by a cyclic change of labeling of the marked boundary points
    $x_1, x_2, x_3, x_4$. Thus, the assertion follows from Proposition~\ref{prop::slepair_unique}.
\end{proof}

We prove Proposition~\ref{prop::slepair_unique} in the end of this section, after some technical lemmas.
The idea is to show that the global $2$-$\SLE_\kappa$ is the unique stationary measure of
a Markov chain, which at each discrete time resamples one of the two curves according to its conditional law given the other one.
In fact, the existence part is already well-known
(see, e.g.,~\cite{KozdronLawlerMultipleSLEs} and Section~\ref{subsec::existence} of the present article),
so we only need to prove the uniqueness.
Nevertheless, as pointed out by the referee, our Markov chain coupling argument actually gives both the uniqueness and existence of the stationary measure,
thanks to the following special case of the Doeblin condition.

\begin{lemma}\label{lem::Doeblin}
    Let $P$ be a Markov kernel on a measurable space 
    $E$ satisfying uniform coupling in the sense that there exists $\theta\in (0,1)$ such that
    the total variation distance between images is uniformly bounded as
    \begin{equation}
        \label{eq:coupleta}
        \sup_{x,y \in E}
        \|\delta_x P - \delta_y P \|_{\mathrm{TV}} \leq \theta.
    \end{equation}
    Then, there exists a unique $P$-stationary probability measure $\PP$, and for
    every $x \in E$, the Markov chain of kernel $P$ starting at $x$ converges in
    distribution to $\PP$.
\end{lemma}

\begin{proof}
    The key consequence of the uniform coupling~\eqref{eq:coupleta} is
    that, whenever $\PP_1$ and $\PP_2$ are two probability measures on
    $E$, we have the upper bound $\|\PP_1 P - \PP_2 P \|_{\mathrm{TV}}
        \leq \theta\|\PP_1 - \PP_2 \|_{\mathrm{TV}}$. Applying this to
    two stationary measures $\PP_1$  and $\PP_2$ readily implies the
    uniqueness. Now, let $\{X_n\}$ be a Markov chain of kernel $P$
    starting from $x\in E$, and denote by $\PP_n$ the law of $X_n$,
    i.e., $\PP_n = \delta_x P^n$. Then, for all $0 \leq n \leq m$, we have
    \begin{align*}
        \| \PP_n - \PP_m \|_{\mathrm{TV}}
        = \| \delta_x P^n - \PP_{m-n} P^n \|_{\mathrm{TV}}
        \leq \theta^n \|\delta_x - \PP_{m-n} \|_{\mathrm{TV}}
        \leq \theta^n ,
    \end{align*}
    so the sequence $\{\PP_n\}$ is Cauchy for the total variation distance.
    Thus, by the completeness of the space of measures,
    it converges to a limit $\PP$ which is $P$-stationary, thus showing the existence.
\end{proof}

%

The next key Lemmas~\ref{lem::sle_positivechance_stay}
and~\ref{lem::sle_positivechance_coincide}
are needed in order to establish the uniform coupling for Lemma~\ref{lem::Doeblin}.
The first one, Lemma~\ref{lem::sle_positivechance_stay}, is crucial:
the chordal $\SLE_\kappa$ in $\Omega$ always has a uniformly positive probability of staying in a subdomain
of $\Omega$ in the following sense.


\begin{lemma}\label{lem::sle_positivechance_stay}
    Let $\kappa\in (0,4]$. Let $(\Omega; x, y)$ be a Dobrushin domain. Let $\Omega^L, U \subset \Omega$ be Dobrushin subdomains
    such that $\Omega^L$, $U$, and $\Omega$ agree in a neighborhood of the arc $(y \, x)$.
    Suppose $\eta \sim \PP(U; x, y)$.
    Then, there exists  a constant $\theta = \theta(\Omega, \Omega^L)>0$ independent of $U$
    such that $\PP[\eta\subset\Omega^L]\ge\theta$.
\end{lemma}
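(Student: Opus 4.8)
The plan is to reduce the uniform lower bound to a single, $U$-independent quantity by coupling the $\SLE_\kappa$ in $U$ monotonically with the $\SLE_\kappa$ in the full domain $\Omega$. By conformal invariance I may assume $(\Omega; x, y) = (\HH; 0, \infty)$, so that the arc $(yx)$ is the negative real axis and both $\Omega^L$ and $U$ differ from $\HH$ only on the side of the positive real axis. The crucial structural observation is that any Dobrushin subdomain $\Omega^L$ agreeing with $\Omega$ in a neighborhood of $(yx)$ is a \emph{left domain}: its boundary consists of the arc $(yx)$ together with a right boundary $\rho$ running from $x$ to $y$ inside $\overline{\Omega}$, so that the event $\{\gamma \subset \Omega^L\}$ is exactly the event that $\gamma$ stays to the left of $\rho$.

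First I would invoke the coupling of Corollary~\ref{cor::sle_coupling}: since $U \subset \Omega$ agrees with $\Omega$ near $(yx)$, there is a coupling $(\gamma, \eta)$ with $\gamma \sim \PP(\Omega; x, y)$, $\eta \sim \PP(U; x, y)$, in which $\eta$ lies to the left of $\gamma$ almost surely. The relation ``to the left of'' is transitive on curves from $x$ to $y$, so on the event $\{\gamma \subset \Omega^L\}$ --- that is, $\gamma$ to the left of $\rho$ --- the curve $\eta$, being to the left of $\gamma$, is also to the left of $\rho$, whence $\eta \subset \overline{\Omega^L}$. To avoid boundary-touching issues I would carry this out with a fixed, slightly smaller left domain $\Omega^{LL}$ satisfying $\overline{\Omega^{LL}} \cap \Omega \subset \Omega^L$ and still agreeing with $\Omega$ near $(yx)$; then $\{\gamma \subset \Omega^{LL}\} \subseteq \{\eta \subset \Omega^L\}$, and therefore
\[
\PP[\eta \subset \Omega^L] \; \ge \; \PP(\Omega; x, y)[\gamma \subset \Omega^{LL}] \; =: \; \theta .
\]
Since the right-hand side refers only to the $\SLE_\kappa$ in the fixed domain $\Omega$ and the fixed subdomain $\Omega^{LL}$, the constant $\theta$ depends only on $\Omega$ and $\Omega^L$, not on $U$, as required. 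The positivity $\theta > 0$ --- i.e. that the chordal $\SLE_\kappa$ in $\Omega$ has a positive chance to remain in $\Omega^{LL}$ --- I would deduce from Lemma~\ref{lem::sle_boundary_perturbation}: the law $\PP(\Omega^{LL}; x, y)$ is absolutely continuous with respect to $\PP(\Omega; x, y)$ and is supported on $\{\gamma \subset \Omega^{LL}\}$, so this event cannot be $\PP(\Omega; x, y)$-null.

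The main obstacle is twofold. First, the transitivity step must be made rigorous at the level of the closure structure of the curves, so that ``$\eta$ left of $\gamma$ left of $\rho$'' genuinely yields $\eta \subset \Omega^L$, including the behaviour near the endpoints $x, y$; the passage to the slightly smaller $\Omega^{LL}$ is precisely what rules out the possibility that $\eta$ touches $\rho$. Second, Corollary~\ref{cor::sle_coupling} is furnished by the Brownian loop-soup construction and is therefore available directly only for $\kappa \in [8/3, 4]$; to cover the full range $\kappa \in (0, 4]$ I would supply the same left-monotone coupling of $\SLE_\kappa$ in the nested Dobrushin domains $U \subset \Omega$ by an absolute-continuity argument based on the boundary perturbation of Lemma~\ref{lem::sle_boundary_perturbation}. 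Once the monotone coupling is in hand for all $\kappa \in (0, 4]$, the remainder of the argument is uniform in $\kappa$ and in $U$.
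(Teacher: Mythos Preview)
Your argument for $\kappa \in [8/3,4]$ is essentially the paper's: the loop-soup coupling of Corollary~\ref{cor::sle_coupling} gives $\eta$ to the left of $\gamma \sim \PP(\Omega;x,y)$, whence $\PP[\eta\subset\Omega^L]\ge\PP[\gamma\subset\Omega^L]>0$. (The detour through a smaller $\Omega^{LL}$ is unnecessary but harmless: once $\eta$ lies weakly to the left of $\gamma$ and $\gamma\subset\Omega^L$, the set $\Omega\setminus\Omega^L$ lies in the right component of $\Omega\setminus\gamma$, hence in the right component of $\Omega\setminus\eta$, so $\eta\subset\Omega^L$.)

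The genuine gap is the range $\kappa\in(0,8/3)$. You assert that the same left-monotone coupling can be obtained ``by an absolute-continuity argument based on the boundary perturbation of Lemma~\ref{lem::sle_boundary_perturbation}'', but this is not a method that produces an ordered coupling: Lemma~\ref{lem::sle_boundary_perturbation} gives a Radon--Nikodym derivative, nothing more, and knowing that two laws are mutually absolutely continuous does not in itself yield any geometric monotonicity between samples. The loop-soup construction underlying Corollary~\ref{cor::sle_coupling} genuinely requires $c\ge 0$ (i.e.\ $\kappa\ge 8/3$), and no substitute monotone coupling for $c<0$ is provided or cited. As written, the proposal for $\kappa<8/3$ is a hope rather than an argument.

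The paper handles $\kappa\le 8/3$ by an entirely different, computational route that exploits $c\le 0$ directly. One writes $\PP[\eta\subset\Omega^L]$ via Lemma~\ref{lem::sle_boundary_perturbation} as an expectation over $\gamma\sim\PP(\Omega;x,y)$, uses a loop-measure identity on $\{\gamma\subset\Omega^L\cap U\}$ together with $c\le 0$ to replace $\mu(\Omega;\gamma,\Omega\setminus U)$ by $\mu(\Omega;\gamma,\Omega\setminus(\Omega^L\cap U))$, applies Lemma~\ref{lem::sle_boundary_perturbation} once more, and finally invokes the Poisson-kernel inequality~\eqref{eqn::poissonkernel_mono2} to obtain the $U$-free lower bound $\PP[\eta\subset\Omega^L]\ge (H_{\Omega^L}(x,y)/H_\Omega(x,y))^h$. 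This is where the real work for small $\kappa$ lies, and it is missing from your outline.
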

\begin{proof}
    We prove the lemma separately for $\kappa\in [8/3,4]$ and $\kappa\in (0,8/3]$.
    For the former case, we make use of the coupling from Section~\ref{subsec::pre_sle_coupling}.
    For the latter, technically easier case, we use properties of the Brownian loop measure
    from Section~\ref{subsec::pre_bebl} and
    the $\SLE$ boundary perturbation property from Section~\ref{subsec::pre_sle_bdrypert}.

    When $\kappa\in [8/3,4]$, we have $c\ge 0$ by~\eqref{eq: alpha and central charge}.
    Suppose $\gamma \sim \PP(\Omega; x, y)$ and
    denote by $D_{\eta}$ (resp.~$D_{\gamma}$) the connected component of $U\setminus\eta$ (resp.~$\Omega\setminus \gamma$) with $(y \, x)$ on its boundary.
    By Corollary~\ref{cor::sle_coupling},
    there exists a coupling of $\eta$ and $\gamma$
    such that $D_{\eta}\subset D_{\gamma}$. Therefore, we have $\PP[\eta\subset\Omega^L]\ge \PP[\gamma\subset\Omega^L]> 0$.
    This gives the assertion for $\kappa\in [8/3,4]$ with $\theta(\Omega, \Omega^L)=\PP[\gamma\subset\Omega^L]>0$.

    When $\kappa\in (0,8/3]$, we have $c\le 0$ by~\eqref{eq: alpha and central charge}.
    Lemma~\ref{lem::sle_boundary_perturbation} gives
    \begin{align} \label{eq: proba to stay 1}
        \PP[\eta\subset\Omega^L]
        = \left(\frac{H_{\Omega}(x,y)}{H_U(x,y)}\right)^h
        \E \big[\one_{\{\gamma\subset\Omega^L\cap U\}}\exp \big(c\mu(\Omega; \gamma, \Omega\setminus U)\big) \big] .
    \end{align}
    Note that, on the event $\{\gamma\subset\Omega^L\cap U\}$, we have
    \begin{align}
        \nonumber
          & \;  \mu(\Omega; \gamma, \Omega\setminus (\Omega^L\cap U))                                                                                              \\
        = & \; \mu(\Omega; \gamma, \Omega\setminus U)+\mu(\Omega; \gamma, \Omega\setminus\Omega^L)-\mu(\Omega; \gamma, \Omega\setminus\Omega^L, \Omega\setminus U)
        \nonumber                                                                                                                                                  \\
        = & \; \mu(\Omega; \gamma, \Omega\setminus U)+\mu(U; \gamma, U\setminus\Omega^L).
        \label{eq: proba to stay 2}
    \end{align}
    Combining~\eqref{eq: proba to stay 1} and~\eqref{eq: proba to stay 2} and
    using Lemmas~\ref{lem::poissonkernel_mono} and~\ref{lem::sle_boundary_perturbation}, we obtain
    \begin{align*}
        \PP[\eta\subset\Omega^L]
        =                                                                              & \; \left(\frac{H_{\Omega}(x,y)}{H_U(x,y)}\right)^h \E\left[\one_{\{\gamma\subset\Omega^L\cap U\}}
            \exp \big( c\mu(\Omega; \gamma, \Omega\setminus U) \big) \right]
                                                                                       &                                                                                                   
        \\
        \ge                                                                            & \; \left(\frac{H_{\Omega}(x,y)}{H_U(x,y)}\right)^h\E\left[\one_{\{\gamma\subset\Omega^L\cap U\}}
        \exp \big( c\mu(\Omega; \gamma, \Omega\setminus(\Omega^L\cap U)) \big) \right] &                                                                                                   
        \\
        =                                                                              & \; \left(\frac{H_{\Omega^L\cap U}(x,y)}{H_{U}(x,y)}\right)^h
        \ge \left(\frac{H_{\Omega^L}(x,y)}{H_{\Omega}(x,y)}\right)^h.
                                                                                       &                                                                                                   
    \end{align*}
    This gives the assertion for $\kappa\in (0,8/3]$ with the lower bound  
    $\theta(\Omega, \Omega^L)=(H_{\Omega^L}(x,y)/H_{\Omega}(x,y))^h>0$.
\end{proof}

Next, we prove that one can couple two $\SLE$s in two
Dobrushin subdomains of $\Omega$ in such a way that
their realizations agree with a uniformly positive probability.

\begin{lemma}\label{lem::sle_positivechance_coincide}
    Let $\kappa\in (0,8)$. Let $(\Omega; x, y)$ be a Dobrushin domain.
    Let $\Omega^L \subset V\subset U, \tilde{U} \subset \Omega$ be Dobrushin subdomains such that
    $\Omega^L$ and $\Omega$ agree in a neighborhood of the arc $(y \, x)$ 
    and $\dist(\Omega^L, \Omega\setminus V)>0$.
    Suppose $\eta \sim \PP(U; x, y)$ and $\tilde{\eta} \sim \PP(\tilde{U}; x, y)$.
    Then, there exists a coupling $(\eta, \tilde{\eta})$ such that $\PP[\eta=\tilde{\eta}\subset\Omega^L]\ge\theta$,
    where the constant $\theta=\theta(\Omega, \Omega^L, V)>0$ is independent of $U$ and~$\tilde{U}$.
\end{lemma}

\begin{proof}
    First, we show that there exists a constant $p_0=p_0(\Omega, \Omega^L, V)>0$,
    independent of $U$ and $\tilde{U}$,
    such that $\PP[\eta\subset\Omega^L]\ge p_0$.
    This is true for $\kappa\le 4$ by Lemma~\ref{lem::sle_positivechance_stay}, so it remains to treat the case $\kappa\in (4,8)$.
    For this, we use the $\SLE$ boundary perturbation property from Section~\ref{subsec::pre_sle_bdrypert}.

    Let $\gamma \sim \PP(\Omega;x,y)$.
    By Lemma~\ref{lem::boundary_perturbation_beyond4}, we have
    \begin{align*}
        \PP[\eta\subset\Omega^L]=\left(\frac{H_{\Omega}(x,y)}{H_U(x,y)}\right)^h\E\left[\one_{\{\gamma\subset\Omega^L\}}
            \exp \big( c\mu(\Omega; \gamma, \Omega\setminus U) \big)\right].
    \end{align*}
    When $\kappa\in (4,6]$, we have $c\ge 0$ and $h\ge 0$ by~\eqref{eq: alpha and central charge}.
    Combining this with the inequality~\eqref{eqn::poissonkernel_mono1}, we obtain
    \begin{align*}
        \PP[\eta\subset\Omega^L]\ge \PP[\gamma\subset\Omega^L] .
    \end{align*}
    On the other hand, when $\kappa\in (6,8)$, then~\eqref{eq: alpha and central charge} implies that $c\le 0$ and $h\le 0$. 
    On the event $\{\gamma\subset\Omega^L\}$, we have
    $\mu(\Omega; \gamma, \Omega\setminus U)\le \mu(\Omega; \Omega^L, \Omega\setminus V)$, so combining with~\eqref{eqn::poissonkernel_mono1}, we obtain
    \[\PP[\eta\subset\Omega^L]\ge \left(\frac{H_{\Omega}(x,y)}{H_V(x,y)}\right)^h
        \exp \big( c\mu(\Omega; \Omega^L, \Omega\setminus V)\big) \PP[\gamma\subset\Omega^L].\]
    In either case,
    we have $\PP[\eta\subset\Omega^L]\ge p_0$ with $p_0=p_0(\Omega, \Omega^L, V)>0$, independently of $U$ and $\tilde{U}$,  as claimed.

    Next, we consider the relation between the two $\SLE_\kappa$ curves $\tilde{\eta}$ and $\eta$.
    Using Lemmas~\ref{lem::sle_boundary_perturbation} and~\ref{lem::boundary_perturbation_beyond4}, we see that
    the law of $\tilde{\eta}$ restricted to $\{\tilde{\eta}\subset \Omega^L\}$ is absolutely continuous with respect to the law of $\eta$
    restricted to $\{\eta \subset \Omega^L\}$,
    and the Radon-Nikodym derivative is given by
    \[R(\eta):=\left(\frac{H_{U}(x,y)}{H_{\tilde{U}}(x,y)}\right)^h \one_{\{\eta\subset\Omega^L\}}
        \exp \big( c\mu(U; \eta, U\setminus\tilde{U}) - c\mu(\tilde{U}; \eta, \tilde{U}\setminus U) \big).\]
    Now, the monotonicity property~\eqref{eqn::poissonkernel_mono1} shows that
    \[ \frac{H_V(x,y)}{H_{\Omega}(x,y)}\le\frac{H_{U}(x,y)}{H_{\tilde{U}}(x,y)}\le \frac{H_{\Omega}(x,y)}{H_V(x,y)} .\]
    Also, because $\Omega^L \subset V \subset U, \tilde{U} \subset \Omega$,
    we see that on the event $\{\eta \subset \Omega^L\}$, we have
    \begin{align*}
        -\mu(\Omega; \Omega^L, \Omega\setminus V) \, \le \,
        \mu(U; \eta, U\setminus\tilde{U}) - \mu(\tilde{U}; \eta, \tilde{U}\setminus U) \, \le \,
        \mu(\Omega; \Omega^L, \Omega\setminus V).
    \end{align*}
    These facts imply that $R(\eta)\ge \one_{\{\eta\subset\Omega^L\}} \, \eps$, where $\eps=\eps(\Omega, \Omega^L, V)>0$
    is independent of $U$ and $\tilde{U}$.

    Now, denote the probability $\PP[\eta\subset\Omega^L]$ by $p$.
    We conclude that the total variation distance of the law of $\tilde{\eta}$ restricted to $\{\tilde{\eta}\subset\Omega^L\}$ and the law
    of $\eta$ restricted to $\{\eta\subset\Omega^L\}$ is bounded from above by
    \[\E\left[(1-R(\eta))^+\one_{\{\eta\subset\Omega^L\}}\right]\le p-p\eps.\]
    Thus, there exists a coupling $(\tilde{\eta}, \eta)$ such that $\PP[\tilde{\eta}=\eta\subset\Omega^L]\ge p\eps$.
    From the first part of the proof, we see that $p\ge p_0(\Omega, \Omega^L, V)$. This proves the asserted result.
\end{proof}

It is important that the bounds in the technical Lemmas~\ref{lem::sle_positivechance_stay} and~\ref{lem::sle_positivechance_coincide}
are uniform over the domains $U$ and $\tilde{U}$.
In~\cite[Lemma 4.2]{MillerSheffieldIG2}, the authors proved a seemingly similar result, but they only showed that there exists
a coupling $(\eta, \tilde{\eta})$ such that $\PP[\eta=\tilde{\eta}]>0$, whereas in Lemma~\ref{lem::sle_positivechance_coincide}
we proved that $\PP[\eta=\tilde{\eta}]\ge \theta$ with the constant $\theta$ uniform over $U$ and $\tilde{U}$.

Let us also emphasize that the assumption in Lemma~\ref{lem::sle_positivechance_stay} is $\Omega^L, U\subset\Omega$, while the assumption in
Lemma~\ref{lem::sle_positivechance_coincide} is $\Omega^L\subset U\subset\Omega$. Lemma~\ref{lem::sle_positivechance_stay}
is the key point in the proof of the uniqueness in Proposition~\ref{prop::slepair_unique},
as it guarantees that there is a uniformly positive probability to couple two Markov chains for \textit{any} initial values.
In order to extend the proof of Proposition~\ref{prop::slepair_unique} for the range $\kappa \in (4,8)$,
Lemma~\ref{lem::sle_positivechance_stay} has to be extended to this range.

\begin{remark}
    It is also worthwhile to discuss the optimal value of the constant $\theta$ in Lemmas~\ref{lem::sle_positivechance_stay}
    and~\ref{lem::sle_positivechance_coincide}.
    When $\kappa\in [8/3,4]$, we know this optimal value exactly: namely,
    from the proof of Lemma~\ref{lem::sle_positivechance_stay}, we see that the optimal constant $\theta=\theta(\Omega, \Omega^L)$ equals $\PP[\gamma\subset\Omega^L]$,
    the probability of the $\SLE_\kappa$ curve  $\gamma \sim \PP(\Omega; x, y)$ to stay in $\Omega^L$.
    Also, in Lemma~\ref{lem::sle_positivechance_coincide},
    if $\kappa\in [8/3,4]$, then
    we can use the coupling of Corollary~\ref{cor::sle_coupling}, which gives the optimal constant
    $\theta=\theta(\Omega, \Omega^L, V) = \PP[\gamma\subset\Omega^L]$.
    In particular, this constant does not depend on $V$, so
    Lemma~\ref{lem::sle_positivechance_coincide}
    actually holds for all $\Omega^L\subset U, \tilde{U}\subset\Omega$.
\end{remark}

\begin{figure}[ht]
    \includegraphics[width=0.6\textwidth]{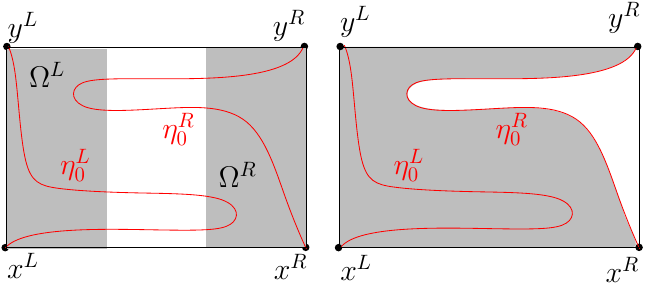}
    {\caption{\label{fig::global2}
            In the left panel, the two gray parts indicate $\Omega^L$ and $\Omega^R$ and the two red curves 
            $\eta^L_0$ and $\eta^R_0$.
            In the right panel, given $\eta_1^R=\eta_0^R$, we sample $\eta^L_1$ as the $\SLE_{\kappa}$ in the gray domain between $x^L$ and $y^L$.
            Lemma~\ref{lem::sle_positivechance_stay} guarantees that $\PP[\eta_1^L\subset\Omega^L \; | \; \eta_1^R]\ge\theta_1$.
            Then we set $\eta^L_2=\eta^L_1$, and hence, $\PP[\eta^L_2\subset\Omega^L]\ge\theta_1$.}}
\end{figure}

Now, we are ready to prove Proposition~\ref{prop::slepair_unique}.

\begin{proof}[Proof of Proposition~\ref{prop::slepair_unique}]
    By conformal invariance, it suffices to consider the domain $\Omega=[0,\ell]\times[0,1]$ 
    with marked boundary points $x^L=(0,0)$, $x^R=(\ell, 0)$, $y^R=(\ell, 1)$, $y^L=(0,1)$.
    We define a Markov chain on pairs of curves
    $(\eta^L,\eta^R) \in X_0(\Omega; x^L, x^R, y^R, y^L)$ as follows (see also Figure~\ref{fig::global2}).
    Given a configuration $(\eta^L_n, \eta^R_n)\in X_0(\Omega; x^L, x^R, y^R, y^L)$,
    we pick $i\in\{L, R\}$ uniformly and resample $\eta^i_{n+1}$
    according to the conditional law given the other curve.
    We will prove that this Markov chain has a unique stationary measure.

    Take two initial configurations $(\eta_0^L, \eta_0^R)$ and $(\tilde{\eta}_0^L, \tilde{\eta}_0^R)$.
    We will show that there exists a constant $p_0>0$, 
    independent of the initial configurations,
    and  a coupling of $(\eta_4^L, \eta_4^R)$ and $(\tilde{\eta}_4^L, \tilde{\eta}_4^R)$ such that
    \begin{align}\label{eqn::markovchain_positivechance}
        \PP[(\eta_4^L, \eta_4^R)=(\tilde{\eta}_4^L, \tilde{\eta}_4^R)]\ge p_0.
    \end{align}
    As depicted in Figure~\ref{fig::global2}, we
    denote $\Omega^L=[0, \ell/3]\times [0,1]$ and $\Omega^R=[2\ell/3, \ell]\times [0,1]$, and we denote by 
    $\theta_1 = \theta(\Omega, \Omega^L) = \theta(\Omega, \Omega^R)$ the constant obtained from Lemma~\ref{lem::sle_positivechance_stay}.
    Given an initial configuration     $(\eta^L_0, \eta^R_0) \in X_0(\Omega; x^L, x^R, y^R, y^L)$,
    we sample $\eta^L_1$ according to the conditional law and set $\eta^R_1=\eta^R_0$.
    Then, we sample $\eta^R_2$ according to the conditional law and set $\eta^L_2=\eta^L_1$. This operation has probability $1/4$.
    Knowing this sampling order, Lemma~\ref{lem::sle_positivechance_stay} gives (see Figure~\ref{fig::global2})
    \begin{align*}
        \PP[\eta^L_2 
            \subset \Omega^L] \ge \theta_1 \qquad \text{and}
        \qquad \PP[\eta^R_2 \subset \Omega^R \; | \; \eta^L_2] \ge \theta_1 .
    \end{align*}
    Thus, for any initial configurations, we have the uniform bound
    \begin{align}\label{eqn::markovchain_positivechance_aux}
        \PP\left[\eta_2^L\subset\Omega^L, \; \eta_2^R\subset\Omega^R\right]\ge \frac{1}{4} \theta_1^2 .
    \end{align}
    Now, suppose that we have two initial configurations $(\eta_0^L, \eta_0^R)$ and $(\tilde{\eta}_0^L, \tilde{\eta}_0^R)$, and
    we sample $(\eta^L_2, \eta^R_2)$ and $(\tilde{\eta}_2^L, \tilde{\eta}_2^R)$ independently.
    From~\eqref{eqn::markovchain_positivechance_aux}, we see that
    \begin{align*}
        \PP\left[\eta_2^L\subset\Omega^L, \; \tilde{\eta}_2^L \subset \Omega^L, \; \eta_2^R \subset \Omega^R, \; \tilde{\eta}_2^R \subset \Omega^R \right]
        \ge \frac{1}{16} \theta_1^4 .
    \end{align*}
    Then, given $(\eta^L_2, \eta^R_2, \tilde{\eta}^L_2, \tilde{\eta}_2^R)$,
    we resample $\eta^L_3$ and $\tilde{\eta}^L_3$ according to the conditional law and set
    $\eta^R_3=\eta^R_2$ and $\tilde{\eta}^R_3=\tilde{\eta}^R_2$.
    Lemma~\ref{lem::sle_positivechance_coincide} guarantees that there exists a coupling such that the probability
    of the event $\{\eta^L_3=\tilde{\eta}^L_3\subset\Omega^L\}$ is at least $\theta_2>0$, 
    independently of $(\eta^L_2, \eta^R_2, \tilde{\eta}^L_2, \tilde{\eta}_2^R)$ as long
    as $\{\eta_2^R, \tilde{\eta}_2^R \subset \Omega^R\}$.
    Finally, given $(\eta^L_3, \eta^R_3, \tilde{\eta}^L_3, \tilde{\eta}_3^R)$,
    we resample $\eta^R_4$ and $\tilde{\eta}^R_4$ according to the conditional law and set
    $\eta^L_4=\eta^L_3$ and $\tilde{\eta}^L_4=\tilde{\eta}^L_3$.
    Similarly, there exists a coupling such that the probability of $\{\eta^R_4=\tilde{\eta}^R_4\subset\Omega^R\}$ is
    at least $\theta_2$ as long as 
    $\{\eta^L_3, \tilde{\eta}^L_3 \subset \Omega^L\}$.
    In conclusion, there exists a coupling of $(\eta_4^L, \eta_4^R)$ and $(\tilde{\eta}_4^L, \tilde{\eta}_4^R)$ such that
    \begin{align*}
        \PP[(\eta_4^L, \eta_4^R)=(\tilde{\eta}_4^L, \tilde{\eta}_4^R)]\ge \frac{1}{64}\theta_1^4\theta_2^2.
    \end{align*}
    This implies the asserted bound~\eqref{eqn::markovchain_positivechance} with $p_0 = \frac{1}{64}\theta_1^4\theta_2^2$.

    In conclusion,
    both the existence and uniqueness of the $2$-$\SLE_\kappa$ now follow from Lemma~\ref{lem::Doeblin}
    applied to the kernel $P$ realizing four steps of the above Markov chain on $X_0(\Omega; x^L, x^R,
        y^R, y^L)$ and~\eqref{eqn::markovchain_positivechance} providing 
    the uniform coupling with $\theta=1-p_0$.
    (Furthermore, the Markov chain is mixing, see Remark~\ref{rem::mixing}.)
\end{proof}

\begin{remark}\label{rem::mixing}
    The Markov chain in the proof of Proposition~\ref{prop::slepair_unique} is mixing, that is,
    there exists a coupling between $(\eta^L_{4n}, \eta^R_{4n})$ and the global $2$-$\SLE_{\kappa}$ $(\eta^L, \eta^R)$
    so that
    \[\PP[(\eta^L_{4n}, \eta^R_{4n})\neq (\eta^L, \eta^R)]\le (1-p_0)^n. \]
\end{remark}

\bigskip

The above proof also works  when the conditional laws of $\eta^R$ and $\eta^L$ are variants of the chordal $\SLE_\kappa$.
In particular, we use
this argument for certain $\SLE$ variants in the proof of Theorem~\ref{thm::global_unique} in Section~\ref{subsec::uniqueness_general}.

\subsection{Uniqueness: General Case}
\label{subsec::uniqueness_general}

Next, we generalize our proof for the global $2$-$\SLE_\kappa$ 
to any number $N \geq 3$ of curves, in order to complete the proof of Theorem~\ref{thm::global_unique}.
Recall that, for $\alpha\in\LP_N$, we denote by $\QQ_{\alpha}^{\#}(\Omega; x_1, \ldots, x_{2N})$
the global $N$-$\SLE_\kappa$ probability measures constructed in Section~\ref{subsec::existence}.
In the general case $N \geq 3$, in order to establish the uniform coupling for Lemma~\ref{lem::Doeblin},
we use the properties of the measures $\QQ_{\alpha}^{\#}(\Omega; x_1, \ldots, x_{2N})$.
Therefore, the Markov chain argument does not yield the existence of the stationary measure.

\smallbreak

We begin by generalizing Lemma~\ref{lem::sle_positivechance_stay}. 
By symmetry, we may assume that $\link{1}{2}\in\alpha$. This lemma only uses the definition of a global multiple $\SLE_\kappa$ and
the property from Lemma~\ref{lem::sle_positivechance_stay} of the chordal $\SLE_\kappa$.

\begin{lemma}\label{lem::positivechance_stay_generalize}
    Let $\kappa\in (0,4]$.
    Let $(\Omega;x_1, \ldots, x_{2N})$ be a polygon and let $\Omega^L, U \subset \Omega$ be sub-polygons
    such that $\Omega^L$, $U$, and $\Omega$ agree in a neighborhood of the arc $(x_1 \, x_2)$.
    Also, let $(\eta_1, \ldots, \eta_N)$ be any global $N$-$\SLE_\kappa$ in $(U;x_1, \ldots, x_{2N})$
    such that $\eta_1$ is the curve connecting $x_1$ and $x_2$. Then, there exists
    a constant ${\theta=\theta(\Omega, \Omega^L)>0}$, independent of $U$,
    such that $\PP[\eta_1\subset\Omega^L]\ge\theta$.
\end{lemma}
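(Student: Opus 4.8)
I need to show that a single marginal $\eta_1$ of any global $N$-$\SLE_\kappa$ stays in the subdomain $\Omega^L$ with probability bounded below uniformly in $U$. The difficulty compared to Lemma~\ref{lem::sle_positivechance_stay} is that $\eta_1$ is not a plain chordal $\SLE_\kappa$: its marginal law is a weighted version of $\SLE_\kappa$, where the weighting involves the other curves $\eta_2,\ldots,\eta_N$ through the Brownian-loop-measure expression $m_\alpha$. The key structural fact I would exploit is the cascade/conditional-law property from Definition~\ref{def::global_multiple_sle}: I would first condition on the collection $\{\eta_2,\ldots,\eta_N\}$, so that the conditional law of $\eta_1$ becomes an honest chordal $\SLE_\kappa$ in the component $D$ of $U\setminus\bigcup_{i\neq 1}\eta_i$ whose boundary carries $x_1$ and $x_2$.

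\textbf{Reduction to the conditional picture.} First I would write
\begin{align*}
\PP[\eta_1\subset\Omega^L]
= \E\big[\, \PP[\eta_1\subset\Omega^L \cond \eta_2,\ldots,\eta_N]\,\big],
\end{align*}
and observe that, conditionally, $\eta_1\sim\PP(D; x_1,x_2)$ where $D\subset U\subset\Omega$ is a Dobrushin domain agreeing with $\Omega$ near the arc $(x_2\,x_1)$ (since $\Omega^L,U,\Omega$ all agree there and the other curves stay away from that arc by the definition of $X_0^\alpha$). The naive hope is to apply Lemma~\ref{lem::sle_positivechance_stay} to this chordal $\SLE_\kappa$ in $D$. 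The obstacle is that Lemma~\ref{lem::sle_positivechance_stay} gives a bound $\theta(\Omega,\Omega^L)$ that is uniform over the \emph{subdomain} in which the chordal $\SLE$ lives, and this is exactly what I need: the constant there does not depend on the sampling domain, only on the ambient $\Omega$ and the target $\Omega^L$. So I would apply it with the role of its ``$U$'' played by the random domain $D$, obtaining
\begin{align*}
\PP[\eta_1\subset\Omega^L \cond \eta_2,\ldots,\eta_N] \ge \theta(\Omega,\Omega^L)
\end{align*}
almost surely, and then integrate.

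\textbf{The main obstacle and how I would handle it.} The subtle point is verifying that Lemma~\ref{lem::sle_positivechance_stay} genuinely applies to the random domain $D$: that lemma requires the sampling domain, $\Omega^L$, and $\Omega$ to agree in a neighborhood of the arc $(x_2\,x_1)$, with $\Omega^L\subset\Omega$ fixed and independent of the sampling domain. This holds because the curves $\eta_2,\ldots,\eta_N$ have positive distance from the arcs adjacent to $x_1,x_2$, so $D$ contains a fixed neighborhood of that arc on which it coincides with $\Omega$ (and with $\Omega^L$). The one genuine checkpoint is that $\Omega^L\subset D$ need \emph{not} hold; but Lemma~\ref{lem::sle_positivechance_stay} does not demand $\Omega^L\subset U$ — it only asks that $\Omega^L\subset\Omega$ and the agreement near the boundary arc — so the event $\{\eta_1\subset\Omega^L\}$ is meaningful and the lemma delivers a lower bound that is uniform over all admissible sampling domains, in particular over the random $D$. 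Since $\theta(\Omega,\Omega^L)$ depends only on $\Omega$ and $\Omega^L$ and not on $D$ (hence not on $U$ or on the realization of the other curves), integrating over $\eta_2,\ldots,\eta_N$ preserves the bound, giving $\PP[\eta_1\subset\Omega^L]\ge\theta(\Omega,\Omega^L)>0$ and completing the proof.
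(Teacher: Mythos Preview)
Your proposal is correct and follows essentially the same approach as the paper: condition on $\eta_2,\ldots,\eta_N$ so that $\eta_1$ is a chordal $\SLE_\kappa$ in the random component $D$, then apply Lemma~\ref{lem::sle_positivechance_stay} (whose bound is uniform over the sampling domain) and integrate. Your explicit check that the hypotheses of Lemma~\ref{lem::sle_positivechance_stay} are met for the random $D$---in particular that $\Omega^L\subset D$ is not required---is a useful clarification the paper leaves implicit.
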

\begin{proof}
    Denote by $\hat{U}_1$ the connected component of $U \setminus \bigcup_{j = 2}^{N} \eta_j$ with $x_1$ and $x_2$ on its boundary.
    Then, the conditional law of $\eta_1$ given $\hat{U}_1$ is the chordal $\SLE_{\kappa}$ in $\hat{U}_1$
    connecting $x_1$ and $x_2$.
    By Lemma~\ref{lem::sle_positivechance_stay}, we have
    $\PP[\eta_1\subset\Omega^L \; | \; \hat{U}_1]\ge \theta(\Omega, \Omega^L)$, independently of $\hat{U}_1$.
    Therefore, $\PP[\eta_1\subset\Omega^L]\ge \theta(\Omega, \Omega^L)$ as well.
\end{proof}

To generalize Lemma~\ref{lem::sle_positivechance_coincide}, we use the following auxiliary result, which says that all of
the curves have a positive probability to stay in a subdomain of $\Omega$, uniformly with respect to a larger subdomain. Its proof uses the explicit construction of the global $N$-$\SLE_\kappa$ measure presented in Section~\ref{subsec::existence}.

\begin{lemma}\label{lem::positivechance_stay_generalize_multiple}
    Let $\kappa\in (0,4]$.
    Let $(\Omega;x_1, \ldots, x_{2N})$ be a polygon and $\Omega^L \subset U\subset\Omega$ sub-polygons.
    Suppose $(\eta_1, \ldots, \eta_N)\sim\QQ_{\alpha}^{\#}(U; x_1, \ldots, x_{2N})$.
    Then, there exists a constant $\theta=\theta(\Omega, \Omega^L)>0$, independent of $U$,
    such that $\PP[\eta_j\subset \Omega^L \; \forall \;  j]\ge \theta$.
\end{lemma}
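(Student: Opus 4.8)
The plan is to prove this by induction on $N$, reducing the statement for $N$ curves to the statement for a single chordal $\SLE_\kappa$ via the boundary perturbation property of $\QQ_{\alpha}^{\#}$ established in Proposition~\ref{prop::multiplesle_boundary_perturbation}. The key tool is the explicit Radon--Nikodym derivative comparing $\QQ_{\alpha}^{\#}(U; x_1,\ldots,x_{2N})$ against $\QQ_{\alpha}^{\#}(\Omega; x_1,\ldots,x_{2N})$, which involves the ratio of pure partition functions, the indicator $\one_{\{\eta_j\subset U\}}$, and a Brownian-loop-measure factor. First I would treat the two sign regimes of $c$ separately, exactly as in the proof of Lemma~\ref{lem::sle_positivechance_stay}, since the loop-measure exponential $\exp(c\mu(\Omega; \Omega\setminus U, \bigcup_j \eta_j))$ must be bounded below uniformly in $U$.

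When $\kappa\in[8/3,4]$ we have $c\ge 0$, so the loop-measure factor is $\ge 1$ on the event $\{\eta_j\subset\Omega^L\;\forall j\}\subset\{\eta_j\subset U\;\forall j\}$. I would then use the monotonicity~\eqref{eqn::partitionfunction_mono} of the partition function, which holds only for $\kappa\le 8/3$, so this sub-case needs a different argument: here I instead lower-bound $\PP[\eta_j\subset\Omega^L\;\forall j]$ directly against the corresponding probability under $\QQ_{\alpha}^{\#}(\Omega; x_1,\ldots,x_{2N})$, which is a fixed positive constant $\theta(\Omega,\Omega^L)$ depending only on $\Omega$ and $\Omega^L$ and not on $U$. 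For $\kappa\in(0,8/3]$ we have $c\le 0$, and I would mimic the second half of the proof of Lemma~\ref{lem::sle_positivechance_stay}: rewrite the loop measure $\mu(\Omega;\bigcup_j\eta_j,\Omega\setminus(\Omega^L))$ using inclusion--exclusion so that the negative contribution can be absorbed, then combine with the partition-function bound~\eqref{eqn::partitionfunction_mono} and the Poisson-kernel monotonicity of Lemma~\ref{lem::poissonkernel_mono}. The outcome in each regime is a lower bound of the form $\theta(\Omega,\Omega^L)>0$ that is manifestly independent of $U$.

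An alternative and perhaps cleaner route, which I would favor to avoid re-deriving everything, is a direct reduction: apply Proposition~\ref{prop::multiplesle_boundary_perturbation} to write
\begin{align*}
\PP[\eta_j\subset\Omega^L \;\forall\, j]
=\frac{\PartF_{\alpha}(\Omega)}{\PartF_{\alpha}(U)}\,\E_{\QQ_{\alpha}^{\#}(\Omega)}
\Big[\one_{\{\eta_j\subset\Omega^L\;\forall j\}}
\exp\Big(c\,\mu\big(\Omega;\Omega\setminus U,\textstyle\bigcup_{j}\eta_j\big)\Big)\Big],
\end{align*}
where I suppress the marked points for brevity. On the event $\{\eta_j\subset\Omega^L\;\forall j\}$ the loop-measure term is controlled uniformly in $U$ because $\Omega^L$ is fixed and $\Omega\setminus U\subset\Omega\setminus\Omega^L$; the partition-function ratio is handled by~\eqref{eqn::partitionfunction_mono} together with~\eqref{eqn::purepartition_bounded}. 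This expresses everything against the single fixed measure $\QQ_{\alpha}^{\#}(\Omega; x_1,\ldots,x_{2N})$, whose probability of $\{\eta_j\subset\Omega^L\;\forall j\}$ is a strictly positive constant depending only on $\Omega$ and $\Omega^L$.

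The main obstacle I anticipate is that the partition-function monotonicity~\eqref{eqn::partitionfunction_mono} is only available for $\kappa\le 8/3$, so in the complementary range $\kappa\in(8/3,4]$ the ratio $\PartF_{\alpha}(\Omega)/\PartF_{\alpha}(U)$ could in principle be small; here the positive sign of $c$ must compensate, and making this compensation uniform in $U$ is the delicate point. I would resolve it exactly as in Lemma~\ref{lem::sle_positivechance_stay} by comparing directly with $\QQ_{\alpha}^{\#}(\Omega)$ rather than trying to bound the ratio, using that $c\ge 0$ makes the loop-measure factor at least $1$ and that enlarging the reference domain from $U$ to $\Omega$ can only help. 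The remaining bookkeeping --- verifying that the loop measure $\mu(\Omega;\Omega\setminus U,\bigcup_j\eta_j)$ is bounded on $\{\eta_j\subset\Omega^L\;\forall j\}$ by a quantity like $\mu(\Omega;\Omega\setminus\Omega^L,\Omega^L)$ independent of $U$ --- is routine given the monotonicity and conformal invariance of the Brownian loop measure recalled in Section~\ref{subsec::pre_bebl}.
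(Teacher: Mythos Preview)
Your overall strategy matches the paper's: apply Proposition~\ref{prop::multiplesle_boundary_perturbation} and split according to the sign of $c$. The induction on $N$ you mention is unnecessary and never used; the paper argues directly for general $N$.

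There is, however, a genuine gap in your treatment of the range $\kappa\in(8/3,4]$. You correctly isolate the obstacle: with $c\ge 0$ the loop-measure exponential is $\ge 1$, so after comparing $\QQ_{\alpha}^{\#}(U)$ to $\QQ_{\alpha}^{\#}(\Omega)$ one is left with the prefactor $\PartF_{\alpha}(\Omega)/\PartF_{\alpha}(U)$, which must be bounded below uniformly in $U$ without~\eqref{eqn::partitionfunction_mono}. But your proposed resolution --- ``exactly as in Lemma~\ref{lem::sle_positivechance_stay} by comparing directly with $\QQ_{\alpha}^{\#}(\Omega)$ rather than trying to bound the ratio'' --- is circular: the comparison with $\QQ_{\alpha}^{\#}(\Omega)$ is precisely what produces the ratio, and the positivity of $c$ acts on a \emph{separate} multiplicative factor and cannot compensate for a small ratio. (Note also that Lemma~\ref{lem::sle_positivechance_stay} handles $\kappa\in[8/3,4]$ via the loop-soup coupling of Corollary~\ref{cor::sle_coupling}, for which there is no analogue for $N$ curves.) The missing step, which the paper carries out, is to use~\eqref{eqn::purepartition_bounded} together with the Poisson-kernel monotonicity~\eqref{eqn::poissonkernel_mono1} to get
\[
\PartF_{\alpha}(U;x_1,\ldots,x_{2N}) \;\le\; \prod_{j=1}^N H_U(x_{a_j},x_{b_j})^h \;\le\; \prod_{j=1}^N H_{\Omega}(x_{a_j},x_{b_j})^h,
\]
so that $\PartF_{\alpha}(\Omega)/\PartF_{\alpha}(U)\ge f_{\alpha}(\Omega;x_1,\ldots,x_{2N})>0$, independent of $U$. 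You do mention~\eqref{eqn::purepartition_bounded} in passing, but not as the resolution of this obstacle.

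A second, smaller issue: your final ``routine'' bound $\mu(\Omega;\Omega\setminus U,\bigcup_j\eta_j)\le \mu(\Omega;\Omega\setminus\Omega^L,\Omega^L)$ is not available, since $\Omega^L$ and $\Omega\setminus\Omega^L$ share boundary and the right-hand side is in general infinite. For $c\le 0$ the paper avoids this entirely by comparing $\QQ_{\alpha}^{\#}(U)$ to $\QQ_{\alpha}^{\#}(\Omega^L)$ (legitimate since $\Omega^L\subset U$): then the exponential is $\exp(-c\mu(U;U\setminus\Omega^L,\bigcup_j\gamma_j^L))\ge 1$, and~\eqref{eqn::partitionfunction_mono} gives $\PartF_{\alpha}(\Omega^L)/\PartF_{\alpha}(U)\ge \PartF_{\alpha}(\Omega^L)/\PartF_{\alpha}(\Omega)$. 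No uniform bound on a loop measure is needed.
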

\begin{proof}
    We prove the lemma separately for $\kappa\in (0,8/3]$ and $\kappa\in [8/3,4]$.
    Assume first that $\kappa\in (0,8/3]$. 
    Let $(\gamma_1^L, \ldots, \gamma_N^L)$ be sampled according to $\QQ_{\alpha}^{\#}(\Omega^L; x_1, \ldots, x_{2N})$.
    By Proposition~\ref{prop::multiplesle_boundary_perturbation}, we have
    \begin{align*}
        \PP[\eta_j\subset\Omega^L \; \forall \;  j]
        = & \; \frac{\PartF_{\alpha}(\Omega^L; x_1, \ldots, x_{2N})}{\PartF_{\alpha}(U; x_1, \ldots, x_{2N})} \;
        \E \Big[ \exp \big(-c\mu \big(U; U\setminus \Omega^L,  \bigcup_{j=1}^N\gamma_j^L \big) \big) \Big] .
    \end{align*}
    Since $\kappa\le 8/3$, we have $c\le 0$ by~\eqref{eq: alpha and central charge}.
    Thus, combining with the monotonicity property~\eqref{eqn::partitionfunction_mono}, we obtain
    \begin{align*}
        \PP[\eta_j\subset\Omega^L \; \forall \;  j]
        \ge \frac{\PartF_{\alpha}(\Omega^L; x_1, \ldots, x_{2N})}{\PartF_{\alpha}(U; x_1, \ldots, x_{2N})}
        \ge \frac{\PartF_{\alpha}(\Omega^L; x_1, \ldots, x_{2N})}{\PartF_{\alpha}(\Omega; x_1, \ldots, x_{2N})} > 0,
    \end{align*}
    where the lower bound is independent of $U$, as claimed.

    Assume next that $\kappa\in [8/3,4]$. Let $(\gamma_1, \ldots, \gamma_N)\sim\QQ_{\alpha}^{\#}(\Omega; x_1, \ldots, x_{2N})$.
    By Proposition~\ref{prop::multiplesle_boundary_perturbation}, we have
    \begin{align*}
        \PP[\eta_j\subset\Omega^L \; \forall \;  j]
        = \frac{\PartF_{\alpha}(\Omega; x_1, \ldots, x_{2N})}{\PartF_{\alpha}(U; x_1, \ldots, x_{2N})}
        \; \E \Big[\one_{\{\forall j,\gamma_j\subset U \}} \exp \big(c\mu \big(\Omega; \Omega\!\setminus\!U,  \bigcup_{j=1}^N\gamma_j \big) \big) \Big] .
    \end{align*}
    Since $\kappa\in [8/3,4]$, we have $c\ge 0$ by~\eqref{eq: alpha and central charge}, so we obtain
    \begin{align*}
        \PP[\eta_j\subset\Omega^L \; \forall \;  j]
        \ge & \; \frac{\PartF_{\alpha}(\Omega; x_1, \ldots, x_{2N})}{\PartF_{\alpha}(U; x_1, \ldots, x_{2N})} \;  \PP[\gamma_j\subset\Omega^L \; \forall \;  j]                                                                        \\
        \ge & \; \frac{\PartF_{\alpha}(\Omega; x_1, \ldots, x_{2N})}{\prod_{\link{a}{b} \in \alpha} H_U(x_{a}, x_{b})^h} \;  \PP[\gamma_j\subset\Omega^L \; \forall \;  j]
            &                                                                                                                                                                         & \text{[by~\eqref{eqn::purepartition_bounded}]} \\
        \ge & \; \frac{\PartF_{\alpha}(\Omega; x_1, \ldots, x_{2N})}{\prod_{\link{a}{b} \in \alpha} H_{\Omega}(x_{a}, x_{b})^h} \; \PP[\gamma_j\subset\Omega^L \; \forall \;  j] > 0.
            &                                                                                                                                                                         & \text{[by~\eqref{eqn::poissonkernel_mono1}]}
    \end{align*}
    This gives the
    assertion for $\kappa\in [8/3,4]$ and finishes the proof.
\end{proof}

Now, we prove an analogue of Lemma~\ref{lem::sle_positivechance_coincide} for $\kappa \leq 4$ and $N \geq 3$, using the explicit construction of the global $N$-$\SLE_\kappa$ measure presented in Section~\ref{subsec::existence}.

\begin{lemma}\label{lem::positivechance_coincide_generalize}
    Let $\kappa\in (0,4]$.
    Let $(\Omega;x_1, \ldots, x_{2N})$ be a polygon, and let $\Omega^L \subset V \subset U$ and $\tilde{U}\subset\Omega$
    be sub-polygons such that $\dist(\Omega^L, \Omega\setminus V)>0$.
        Also, suppose that $(\eta_1, \ldots, \eta_N)\sim\QQ_{\alpha}^{\#}(U; x_1, \ldots, x_{2N})$ and
    $(\tilde{\eta}_1, \ldots, \tilde{\eta}_N)\sim\QQ_{\alpha}^{\#}(\tilde{U}; x_1, \ldots, x_{2N})$.
    Then, there 
    exists a coupling of $(\eta_1, \ldots, \eta_N)$ and $(\tilde{\eta}_1, \ldots, \tilde{\eta}_N)$ such that
    $\PP[\eta_j=\tilde{\eta}_j \subset \Omega^L \; \forall \;  j] \ge \theta$, where the constant $\theta=\theta(\Omega, \Omega^L, V)>0$
    is independent of $U$~and~$\tilde{U}$.
\end{lemma}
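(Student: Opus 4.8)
The plan is to mirror the structure of the proof of Lemma~\ref{lem::sle_positivechance_coincide}, upgrading each of its two ingredients to the multiple-SLE setting using the tools already developed in this section. The target is a lower bound, uniform over $U$ and $\tilde{U}$, on the probability that a suitable coupling forces all $N$ curves to coincide inside $\Omega^L$. The two pieces I need are: first, a uniform lower bound on the probability that $(\eta_1,\ldots,\eta_N)$ stays inside $\Omega^L$, which is exactly Lemma~\ref{lem::positivechance_stay_generalize_multiple}; and second, an absolute-continuity estimate between the laws of $(\tilde\eta_1,\ldots,\tilde\eta_N)$ and $(\eta_1,\ldots,\eta_N)$ restricted to the event that all curves lie in $\Omega^L$, with a Radon--Nikodym derivative bounded below by a uniform constant. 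Given these, the maximal-coupling argument at the end of Lemma~\ref{lem::sle_positivechance_coincide} carries over verbatim.

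First I would invoke Lemma~\ref{lem::positivechance_stay_generalize_multiple} to get a constant $p_0 = p_0(\Omega, \Omega^L) > 0$, independent of $U$, with $\PP[\eta_j \subset \Omega^L \; \forall \; j] \ge p_0$; the same applies to $\tilde\eta$. Next I would compute the Radon--Nikodym derivative of $\QQ_{\alpha}^{\#}(\tilde{U}; x_1,\ldots,x_{2N})$ with respect to $\QQ_{\alpha}^{\#}(U; x_1,\ldots,x_{2N})$ on the event $\{\eta_j \subset \Omega^L \; \forall \; j\}$. The natural route is to pass through the common reference domain $\Omega$ using Proposition~\ref{prop::multiplesle_boundary_perturbation} twice (once for $U$, once for $\tilde{U}$) and take the ratio. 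This yields a factor
\begin{align*}
R(\eta_1,\ldots,\eta_N) = \left(\frac{\PartF_{\alpha}(U; x_1,\ldots,x_{2N})}{\PartF_{\alpha}(\tilde{U}; x_1,\ldots,x_{2N})}\right) \one_{\{\eta_j \subset \Omega^L \; \forall \; j\}} \exp\Big(c\mu\big(U; U\setminus\tilde{U}, \textstyle\bigcup_j\eta_j\big) - c\mu\big(\tilde{U}; \tilde{U}\setminus U, \textstyle\bigcup_j\eta_j\big)\Big),
\end{align*}
where I must check that the $U$-versus-$\Omega$ loop terms cancel correctly. The key is that on $\{\eta_j \subset \Omega^L \; \forall \; j\}$, since $\Omega^L \subset V \subset U,\tilde{U}$ and $\dist(\Omega^L, \Omega\setminus V)>0$, both Brownian loop terms are controlled by $\mu(\Omega; \Omega^L, \Omega\setminus V) < \infty$, so the exponential is bounded below uniformly. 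The partition function ratio is controlled using the bound $0 < \PartF_{\alpha} \le \prod_j H(x_{a_j},x_{b_j})^h$ from~\eqref{eqn::purepartition_bounded} together with the monotonicity~\eqref{eqn::poissonkernel_mono1} of the Poisson kernel, exactly as the scalar Poisson-kernel ratio was bounded in Lemma~\ref{lem::sle_positivechance_coincide}; this gives a uniform two-sided bound independent of $U,\tilde{U}$. Hence $R \ge \one_{\{\eta_j \subset \Omega^L \forall j\}}\,\eps$ for some $\eps = \eps(\Omega,\Omega^L,V)>0$.

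Finally, with $p := \PP[\eta_j \subset \Omega^L \; \forall \; j] \ge p_0$ and $R \ge \eps$ on that event, the total variation distance between the restricted laws is at most $\E[(1-R)^+\one_{\{\eta_j\subset\Omega^L\forall j\}}] \le p - p\eps$, so there is a coupling with $\PP[\eta_j = \tilde\eta_j \subset \Omega^L \; \forall \; j] \ge p\eps \ge p_0\eps =: \theta$, uniform in $U,\tilde{U}$, which is the assertion. I expect the main obstacle to be the bookkeeping in the inclusion--exclusion loop-measure terms $m_\alpha$ when forming the ratio of the two boundary-perturbation formulas: one must verify that after cancellation the surviving Brownian loop contributions are precisely those intersecting the symmetric-difference regions $U\setminus\tilde{U}$ and $\tilde{U}\setminus U$, and that on $\{\eta_j\subset\Omega^L\forall j\}$ these are finite and uniformly bounded by $\mu(\Omega;\Omega^L,\Omega\setminus V)$. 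This is the step where the multiple-curve structure genuinely complicates the scalar computation, though it remains routine given conformal invariance and the restriction property of the loop measure.
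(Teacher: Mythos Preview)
Your overall architecture matches the paper's proof: compute the Radon--Nikodym derivative via Proposition~\ref{prop::multiplesle_boundary_perturbation}, bound it below on $\{\eta_j\subset\Omega^L\;\forall j\}$, then apply the total-variation coupling together with Lemma~\ref{lem::positivechance_stay_generalize_multiple}. The loop-measure part of $R$ is indeed uniformly controlled by $\mu(\Omega;\Omega^L,\Omega\setminus V)$ on the relevant event, and your closing worry about the combinatorial terms $m_\alpha$ is a red herring: those do not appear in the Radon--Nikodym derivative from Proposition~\ref{prop::multiplesle_boundary_perturbation}, only the ordinary loop measure $\mu$ does.

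The genuine gap is in your treatment of the partition-function ratio $\PartF_{\alpha}(U;\ldots)/\PartF_{\alpha}(\tilde U;\ldots)$. You assert this is bounded ``exactly as the scalar Poisson-kernel ratio was bounded in Lemma~\ref{lem::sle_positivechance_coincide}'', but that analogy breaks down. In the scalar case the Poisson kernel is monotone in the domain, giving a two-sided bound $H_V/H_\Omega\le H_U/H_{\tilde U}\le H_\Omega/H_V$. For $\PartF_\alpha$ the corresponding monotonicity~\eqref{eqn::partitionfunction_mono} is only available when $\kappa\le 8/3$; for $\kappa\in(8/3,4]$ there is no a priori domain monotonicity of $\PartF_\alpha$. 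The bound~\eqref{eqn::purepartition_bounded} you cite gives only an \emph{upper} bound on $\PartF_\alpha(\tilde U;\ldots)$, not the needed \emph{lower} bound on $\PartF_\alpha(U;\ldots)$ uniform over $V\subset U\subset\Omega$. The paper handles this by splitting into the two ranges of $\kappa$: for $\kappa\le 8/3$ it uses~\eqref{eqn::partitionfunction_mono} directly, while for $\kappa\in[8/3,4]$ it writes $\PartF_\alpha(U;\ldots)=\prod_j H_U(x_{a_j},x_{b_j})^h\, f_\alpha(U;\ldots)$, bounds the Poisson kernels below via~\eqref{eqn::poissonkernel_mono1}, and then proves the nontrivial fact that $\inf_{V\subset A\subset\Omega} f_\alpha(A;x_1,\ldots,x_{2N})>0$ by a conformal-invariance-plus-compactness argument. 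Without this step, your lower bound on $R$ is not justified for $\kappa\in(8/3,4]$.
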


\begin{proof}
    By Proposition~\ref{prop::multiplesle_boundary_perturbation}, the law of $(\tilde{\eta}_1, \ldots, \tilde{\eta}_N)$
    restricted to $\{\tilde{\eta}_j\subset\Omega^L \; \forall \;  j\}$ is absolutely continuous with respect to the law of $(\eta_1, \ldots, \eta_N)$
    restricted to $\{\eta_j\subset\Omega^L \; \forall \;  j\}$, with Radon-Nikodym derivative
    \begin{align*}
        R(\eta_1, \ldots, \eta_N)
        = \; & \frac{\PartF_{\alpha}(U; x_1, \ldots, x_{2N})}{\PartF_{\alpha}(\tilde{U}; x_1, \ldots, x_{2N})} \;
        \one_{\{\eta_j\subset\Omega^L \; \forall \;  j\}}
        \\ \; &
        \times \exp \Big(c \mu \big(U; U\setminus\Omega^L, \bigcup_{j=1}^N\eta_j \big)
        \, - \, c \mu \big(\tilde{U}; \tilde{U}\setminus\Omega^L,  \bigcup_{j=1}^N\eta_j \big) \Big) .
    \end{align*}
    First, we will find a positive lower bound for $R(\eta_1, \ldots, \eta_N)$, separately for $\kappa\in (0,8/3]$ and $\kappa\in [8/3,4]$.
    Since $\Omega^L\subset V\subset U, \tilde{U}\subset\Omega$, on the event $\{\eta_j\subset\Omega^L \; \forall \;  j\}$, we have
    \begin{align*}
        \Big| \mu\big(U; U\setminus\Omega^L,  \bigcup_{j=1}^N\eta_j \big)
        - \mu \big(\tilde{U}; \tilde{U}\setminus\Omega^L,  \bigcup_{j=1}^N\eta_j \big)
        \Big| \le \, \mu(\Omega; \Omega\setminus V, \Omega^L).
    \end{align*}
    When $\kappa \in (0, 8/3]$, 
    we have $c\le 0$ by~\eqref{eq: alpha and central charge}.
    Thus, using the monotonicity property~\eqref{eqn::partitionfunction_mono}, we see that
    on the event $\{\eta_j\subset\Omega^L \; \forall \;  j\}$, we have
    \begin{align}\label{eqn::positivechance_coincide_smallkappa}
        R(\eta_1, \ldots, \eta_N)
        \ge \frac{\PartF_{\alpha}(\Omega^L; x_1, \ldots, x_{2N})}{\PartF_{\alpha}(\Omega; x_1, \ldots, x_{2N})}
        \; \exp \big( c\mu(\Omega; \Omega\setminus V, \Omega^L) \big) > 0 .
    \end{align}
    On the other hand, when $\kappa\in [8/3,4]$, we have $c\ge 0$ by~\eqref{eq: alpha and central charge}.
    On the event $\{\eta_j\subset\Omega^L \; \forall \;  j\}$, we have
    \begin{align*}
        R(\eta_1, \ldots, \eta_N)\ge \frac{\PartF_{\alpha}(U; x_1, \ldots, x_{2N})}{\PartF_{\alpha}(\tilde{U}; x_1, \ldots, x_{2N})}
        \; \exp \big(-c\mu(\Omega; \Omega\setminus V, \Omega^L)\big).
    \end{align*}
    Using~\eqref{eqn::purepartition_bounded} and \eqref{eqn::poissonkernel_mono1}, we estimate the denominator as
    \begin{align}\label{eqn::positivechance_coincide_aux1}
        \PartF_{\alpha}(\tilde{U}; x_1, \ldots, x_{2N})
        \le \prod_{\link{a}{b} \in \alpha} H_{\tilde{U}}(x_{a}, x_{b})^h
        \le \prod_{\link{a}{b} \in \alpha} H_{\Omega}(x_{a}, x_{b})^h ,
    \end{align}
    and using~\eqref{eqn::poissonkernel_mono1}, we estimate the numerator as
    \begin{align*}
        \PartF_{\alpha}(U; x_1, \ldots, x_{2N}) = & \;
        \prod_{\link{a}{b} \in \alpha} H_U(x_{a}, x_{b})^h \times f_{\alpha}(U; x_1, \ldots, x_{2N})
        \\ 
        \ge & \; \prod_{\link{a}{b} \in \alpha} H_{\Omega^L}(x_{a}, x_{b})^h \times f_{\alpha}(U; x_1, \ldots, x_{2N}).
    \end{align*}
    Taking the infimum over all sub-polygons $A$ such that $V\subset A\subset\Omega$, we have
    \[f_{\alpha}(U; x_1, \ldots, x_{2N}) \ge \inf_A f_{\alpha}(A; x_1, \ldots, x_{2N}) := \upsilon(\Omega, V) . \]
    We next show that this infimum is strictly positive.
    By conformal invariance of $f_{\alpha}$, we may take $\Omega = \HH$, and
    we have \[f_{\alpha}(A; x_1, \ldots, x_{2N})  = f_{\alpha}(\HH; \varphi_A(x_1), \ldots, \varphi_A(x_{2N})) > 0\]
    for any conformal map $\varphi_A \colon A \to \HH$. Now, we have
    \begin{align*}
        \upsilon(\Omega, V) = \inf_{(y_1, \ldots, y_{2N}) \in K}  f_{\alpha}(\HH; y_1, \ldots, y_{2N}) > 0 ,
    \end{align*}
    where $K$ is a compact subset  of $\R^{2N}$ such that $(\varphi_A(x_1), \ldots, \varphi_A(x_{2N})) \in K$ for all $A$. 
    Thus, we obtain
    \begin{align}\label{eqn::positivechance_coincide_aux2}
        \PartF_{\alpha}(U; x_1, \ldots, x_{2N})\ge \prod_{\link{a}{b} \in \alpha} H_{\Omega^L}(x_{a}, x_{b})^h \times \upsilon(\Omega, V)> 0.
    \end{align}
    After combining \eqref{eqn::positivechance_coincide_aux1} and \eqref{eqn::positivechance_coincide_aux2}, we finally obtain
    \begin{align}\label{eqn::positivechance_coincide_biggerkappa}
        R(\eta_1, \ldots, \eta_N)
        \ge\!\!\! \prod_{\link{a}{b} \in \alpha} \!\! \left(\frac{H_{\Omega^L}(x_{a}, x_{b}}{H_{\Omega}(x_{a}, x_{b}}\right)^h
        \upsilon(\Omega, V) \; e^{-c\mu(\Omega; \Omega\setminus V, \Omega^L)}>0.
    \end{align}
    In both estimates~\eqref{eqn::positivechance_coincide_smallkappa} and \eqref{eqn::positivechance_coincide_biggerkappa},
    we obtain a lower bound $R(\eta_1, \ldots, \eta_N)\ge \eps:=\eps(\Omega, \Omega^L, V) > 0$, independently of $U$ and $\tilde{U}$, as desired.
    This completes the first part of the proof.

    Now, denote the probability
    $\PP[\eta_j\subset\Omega^L \; \forall \;  j]$ by $p$.
    The total variation distance of the law of $(\tilde{\eta}_1, \ldots, \tilde{\eta}_N)$ restricted to $\{\tilde{\eta}_j\subset\Omega^L \; \forall \;  j\}$
    and the law of $(\eta_1, \ldots, \eta_N)$ restricted to $\{\eta_j\subset\Omega^L \; \forall \;  j\}$ is bounded from above by
    \[ \E \big[(1 - R(\eta_1, \ldots, \eta_N))^+\one_{\{\eta_j\subset\Omega^L \; \forall \;  j\}} \big] \le p(1-\eps). \]
    It follows from this observation that there exists a coupling of $(\eta_1, \ldots, \eta_N)$ and $(\tilde{\eta}_1, \ldots, \tilde{\eta}_N)$
    such that $\PP[\tilde{\eta}_j=\eta_j\subset\Omega^L \; \forall \;  j] \ge p \eps$.
    Combining this with Lemma~\ref{lem::positivechance_stay_generalize_multiple}, we obtain the asserted result.
\end{proof}

\begin{figure}[ht]
    \includegraphics[width=\textwidth]{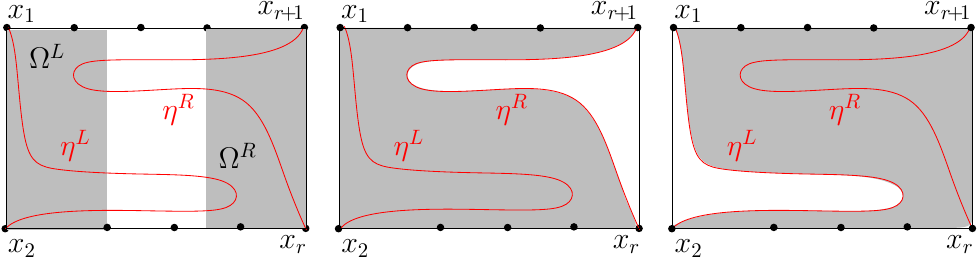}
    {\caption{\label{fig::global}
            In the left panel, the two red curves 
            are $\eta^L$ and $\eta^R$, and the two gray parts 
            are $\Omega^L$ and $\Omega^R$. In the middle panel, the gray part 
            is the domain $D^L$, and
            the marked points along the boundary of $D^L$ are $x_1, \ldots, x_{r-1}, x_{r+2}, \ldots, x_{2N}$.
            In the right panel, the gray part 
            is the domain $D^R$, and the marked points along the boundary of $D^R$ are $x_3, \ldots, x_{2N}$.}}
\end{figure}

We are now ready to conclude with the proof of Theorem~\ref{thm::global_unique}.

\begin{proof}[Proof of Theorem~\ref{thm::global_unique}]
    The existence 
    was proved in~\cite{KozdronLawlerMultipleSLEs, LawlerPartitionFunctionsSLE, PeltolaWuGlobalMultipleSLEs},
    and summarized in Section~\ref{subsec::existence}. Hence, we only need to prove the uniqueness.
    To this end, we proceed
    by induction on $N \geq 2$. The case $N=2$ is the content of Corollary~\ref{cor::slepair_unique}.
    Thus, we let $N \geq 3$ and assume that, for any link pattern
    $\beta\in\LP_{N-1}$, there exists a unique global $(N-1)$-$\SLE_{\kappa}$ associated to $\beta$.
    For $j \in \{1,\ldots, N-1\}$, we denote by $\QQ_{\beta}^{\link{a_j}{b_j}}(\Omega; x_1, \ldots, x_{2N-2})$ the marginal law of $\eta_j$
    in this global multiple $\SLE$.

    Now, let $\alpha\in \LP_N$ and suppose that $(\eta_1, \ldots, \eta_N) \in X_0^{\alpha}(\Omega; x_1, \ldots, x_{2N})$
    has the law of a global $N$-$\SLE_{\kappa}$ associated to $\alpha$.
    By symmetry, we may assume that $\link{1}{2}, \link{r}{r+1} \in \alpha$ with $r \in \{3, 4, \ldots, 2N-1\}$.
    Denote by $\eta^L$ (resp.~$\eta^R$) the curve in the collection $\{\eta_1, \ldots, \eta_N\}$ that connects $x_1$ and $x_2$ (resp.~$x_r$ and $x_{r+1}$).
    It follows from the induction hypothesis that, given $(\eta^L, \eta^R)$,
    the conditional law of the other $(N-2)$ curves  is the unique global $(N-2)$-$\SLE_{\kappa}$
    associated to $(\alpha \removeLink \link{r}{r+1}) \removeLink \link{1}{2}$ in the appropriate remaining domain
    (recalling the link removal notation).
    Thus, it is sufficient to prove the uniqueness of the joint law on the pair $(\eta^L, \eta^R)$.

    The induction hypothesis also implies that,
    given $\eta^R$ (resp.~$\eta^L$), the conditional law of the rest of the curves is
    the unique global $(N-1)$-$\SLE_{\kappa}$ associated to the link pattern $\alpha \removeLink \link{r}{r+1}$ (resp.~$\alpha \removeLink \link{1}{2}$). 
    As~illustrated in Figure~\ref{fig::global},
    we denote by $D^L$ (resp.~$D^R$) the connected component of $\Omega\setminus \eta^R$ (resp.~$\Omega\setminus \eta^L$)
    with $x_1$ and $x_{2}$ (resp.~$x_{r}$ and $x_{r+1}$) on its boundary. 
    Then, the conditional law of $\eta^L$ given $\eta^R$ is
\begin{align*}
\QQ_{\alpha\removeLink\link{r}{r+1}}^{\link{1}{2}}(D^L; x_1, \ldots, x_{r-1}, x_{r+2},\ldots, x_{2N}) 
\end{align*}
and  the conditional law of $\eta^R$ given $\eta^L$ is 
\begin{align*}
\QQ_{\alpha\removeLink\link{1}{2}}^{\link{r-2}{r-1}}(D^R; x_3, \ldots, x_{2N}) .  
\end{align*}

Now to finish, following the idea of the proof of Proposition~\ref{prop::slepair_unique}, we consider Markov chains sampling $\eta^L$ and $\eta^R$ from these conditional laws.
After replacing    
    Lemma~\ref{lem::sle_positivechance_stay} by Lemma~\ref{lem::positivechance_stay_generalize} (for $N-1$)
    and Lemma~\ref{lem::sle_positivechance_coincide} by Lemma~\ref{lem::positivechance_coincide_generalize} (also for $N-1$)
    in the proof of Proposition~\ref{prop::slepair_unique}, 
    we see that this Markov chain has a unique stationary measure which coincides with the one presented in Section~\ref{subsec::existence}.
\end{proof}

\subsection{Marginal Law}
\label{subsec:marginal}

To conclude this section, we determine the marginal law of a single curve in the global multiple $\SLE_\kappa$.
Recall that the pure partition functions $\PartF_{\alpha}$ were defined in~\eqref{eqn::purepartition_alpha_def}. We denote
\begin{align*}
    \PartF_{\alpha}(x_1, \ldots, x_{2N}):=\PartF_{\alpha}(\HH; x_1, \ldots, x_{2N}) \qquad \text{for }x_1<\cdots<x_{2N}.
\end{align*}

\begin{lemma}\label{lem::loewnerchain_purepartition}
    \textnormal{\cite[Proposition~4.9]{PeltolaWuGlobalMultipleSLEs}}
    Let $\kappa\in (0,4]$ and $\alpha\in\LP_N$. Assume that $\link{a}{b}\in\alpha$.
    Let $W_t$ be the solution to the following SDEs:
    \begin{align}\label{eqn::loewnerchain_purepartition}
        \begin{split}
            \ud W_t
            = \; & \sqrt{\kappa} \ud B_t
            +\kappa\partial_{a}\log\PartF_{\alpha} \left(V_t^1, \ldots, V_t^{a-1}, W_t, V_t^{a+1}, \ldots, V_t^{2N}\right) \ud t,\\
            \ud V_t^i = \; & \frac{2 \ud t}{V_t^i-W_t},
        \end{split}
    \end{align}
    with $W_0 = x_a$ and $V_0^i = x_i$ for $i\neq a$.
    Then, the Loewner chain driven by $W_t$ is well-defined up to the swallowing time $T_b$ of $x_b$.
    Moreover, it is almost surely generated by a continuous curve up to and including $T_b$.
    This curve has the same law as the one connecting $x_a$ and $x_b$ in the global multiple $\SLE_{\kappa}$ associated to $\alpha$
    in the polygon $(\HH; x_1, \ldots, x_{2N})$.
\end{lemma}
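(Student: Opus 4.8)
The plan is to identify the marginal law of the curve $\eta$ connecting $x_j$ and $x_k$ under $\QQ_{\alpha}^{\#}$ as a chordal $\SLE_{\kappa}$ reweighted by the partition function $\PartF_{\alpha}$ evaluated along the Loewner flow, and then to read off the drift in~\eqref{eqn::loewnerchain_purepartition} from Girsanov's theorem. By conformal invariance I would work in $(\HH; x_1, \ldots, x_{2N})$ and take as reference the standard chordal Loewner chain driven by $\sqrt{\kappa}B_t$ with $W_0 = x_j$, i.e.\ the $\SLE_{\kappa}$ from $x_j$ to $\infty$, with maps $g_t$ and flowed points $V_t^i = g_t(x_i)$. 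The goal is then to show that the marginal of $\eta$ is absolutely continuous with respect to this reference, with a Radon--Nikodym derivative whose explicit form produces the required drift.

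The main step is to compute the marginal density in closed form. Starting from the construction~\eqref{eqn::def_radon_alpha}, under the product measure $\PP_{\alpha}$ the $(x_j,x_k)$-curve is an independent chordal $\SLE_{\kappa}$, and $\QQ_{\alpha}^{\#}$ reweights by $R_{\alpha}/f_{\alpha}(\HH; x_1,\ldots,x_{2N})$. Integrating out the remaining $N-1$ curves and using the restriction property of the Brownian loop measure to separate the loops that meet $\eta$ from those that do not, one finds that conditionally on an initial segment $\eta[0,t]$ the remaining configuration, after slitting by $g_t$, is governed by the partition function $\PartF_{\hat\alpha}$ of the reduced system $\hat\alpha = \alpha\removeLink\link{j}{k}$ in the uniformized domain. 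Combining this with the defining identity~\eqref{eqn::purepartition_alpha_def}, with the conformal covariance of $\PartF_{\alpha}$ (weight $h$ at each marked point), and with the restriction terms reorganized as in Proposition~\ref{prop::multiplesle_boundary_perturbation}, the density restricted to the filtration generated by $\eta[0,t]$ takes the form
\begin{align*}
M_t = \frac{\PartF_{\alpha}(V_t^1,\ldots,V_t^{j-1},W_t,V_t^{j+1},\ldots,V_t^{2N})}{\PartF_{\alpha}(x_1,\ldots,x_{2N})}\prod_{i\neq j} g_t'(x_i)^{h},
\end{align*}
i.e.\ the partition function evaluated at the flowed points times the appropriate conformal factors. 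A useful simplification is that $M_t$, being a Radon--Nikodym derivative restricted along a filtration, is \emph{automatically} a positive local martingale under the reference measure by the tower property; this avoids having to establish the second-order null-vector PDE for $\PartF_{\alpha}$ as a prerequisite (that PDE instead becomes a \emph{consequence} of the martingale property).

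With $M_t$ in hand, Girsanov's theorem finishes the identification: since the conformal prefactors $\prod_{i\neq j} g_t'(x_i)^{h}$ are of finite variation and carry no Brownian part, and $\PartF_{\alpha}(x_1,\ldots,x_{2N})$ is constant, the martingale part of $\ud\log M_t$ equals $\sqrt{\kappa}\,\partial_j\log\PartF_{\alpha}(\ldots,W_t,\ldots)\,\ud B_t$, where $\partial_j$ acts on the explicit $W_t$-slot. Hence weighting the reference $\SLE_{\kappa}$ by $M_t$ shifts the driving Brownian motion so that $\ud W_t = \sqrt{\kappa}\,\ud B_t + \kappa\,\partial_j\log\PartF_{\alpha}\,\ud t$, which is precisely~\eqref{eqn::loewnerchain_purepartition}.

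For well-definedness up to the swallowing time $T_k$ and generation by a continuous curve up to and including $T_k$, I would argue by local absolute continuity: away from the target $x_k$ the a priori bounds $0 < f_{\alpha}\le 1$ and~\eqref{eqn::purepartition_bounded} keep $M_t$ bounded on compact time-sets bounded away from $T_k$, so on each such set the driven chain is absolutely continuous with respect to a chordal $\SLE_{\kappa}$ and is therefore generated by a continuous simple curve for $\kappa\le 4$; the existence of this marginal as an actual global multiple $\SLE$ marginal is guaranteed by Theorem~\ref{thm::global_unique}. Extending continuity up to and including $T_k$, i.e.\ that the curve closes up at $x_k$, is the remaining delicate point and would be handled using the behaviour of $\PartF_{\alpha}$ near the coincidence $W_t \to V_t^k$ together with the known endpoint behaviour of the reference process. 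I expect the main obstacle to be the closed-form identification of $M_t$ in the second step, namely transferring the integrated restriction/boundary-perturbation property of Proposition~\ref{prop::multiplesle_boundary_perturbation} into the statement that the conditional normalization along the Loewner flow is exactly $\PartF_{\alpha}$ evaluated at the flowed points, since $\PartF_{\alpha}$ is defined only through Brownian-loop and excursion expectations.
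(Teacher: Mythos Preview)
The paper does not give its own proof of this lemma: it is stated verbatim as a citation of \cite[Proposition~4.9]{PeltolaWuGlobalMultipleSLEs} and used as a black box, so there is no in-paper argument to compare against. Your sketch is in fact a reasonable outline of how that cited result is established: one identifies the marginal of the $\link{j}{k}$-curve under $\QQ_{\alpha}^{\#}$ as a reweighting of a reference chordal $\SLE_{\kappa}$ by the local martingale $M_t=\PartF_{\alpha}(V_t^{1},\ldots,W_t,\ldots,V_t^{2N})\prod_{i\neq j}g_t'(x_i)^{h}/\PartF_{\alpha}(x_1,\ldots,x_{2N})$, and then reads off~\eqref{eqn::loewnerchain_purepartition} from Girsanov.

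Two points worth flagging. First, the step you yourself mark as the main obstacle, namely that the conditional normalization along the flow is \emph{exactly} $\PartF_{\alpha}$ at the flowed points, is the substantive content of the argument in~\cite{PeltolaWuGlobalMultipleSLEs}; it requires a cascade relation expressing $\PartF_{\alpha}$ in terms of $\PartF_{\hat\alpha}$ for $\hat\alpha=\alpha\removeLink\link{j}{k}$ after integrating out the $\link{j}{k}$-curve, together with the boundary-perturbation identity, and is not a one-line consequence of Proposition~\ref{prop::multiplesle_boundary_perturbation}. Second, your argument that $M_t$ is automatically a local martingale by the tower property is correct, but using this to \emph{avoid} the null-vector PDE is slightly circular if you have not already proved enough regularity of $\PartF_{\alpha}$ to apply It\^o's formula in the Girsanov step; in~\cite{PeltolaWuGlobalMultipleSLEs} smoothness of $\PartF_{\alpha}$ is established separately. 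Modulo these caveats, your outline matches the cited proof.
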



\section{Multiple Interfaces in Ising and Random-Cluster Models}
\label{sec::ising_fkperco}
In this final section, we give examples of discrete models whose interfaces converge in the scaling limit to multiple $\SLE$s.
More precisely, we consider the critical Ising and random-cluster models in the plane.

In Sections~\ref{subsec::fkperco}~--~\ref{subsec::globaluniquebeyond4}, we  consider interfaces in the
critical random-cluster models with alternating boundary conditions and fixing the connectivity pattern of the curves.
We show that, given the convergence of a single interface, 
multiple interfaces also have a conformally invariant scaling limit,
namely the unique global multiple $\SLE_\kappa$ with $\kappa \in (4,6]$.
Interestingly, this range of the parameter $\kappa$ is beyond
the range $(0,4]$, where global multiple $\SLE$s have been explicitly constructed and classified.
Thus, from the convergence of these discrete interfaces we
would in fact get the existence and uniqueness of the global multiple $\SLE_\kappa$ with $\kappa \in (4,6]$.
Unfortunately, the convergence of a single interface in the random-cluster model towards the chordal $\SLE_\kappa$ has only
been rigorously established for the case of $\kappa = 16/3$ --- the FK-Ising model.
This is the case appearing in Proposition~\ref{prop::fkising_alternating_cvg}, whose proof is completed in Section~\ref{subsec::globaluniquebeyond4}.
The convergence of two interfaces of the FK-Ising model was also proved in~\cite{KemppainenSmirnovFKIsing},
where the authors used a discrete holomorphic observable constructed
in~\cite{SmirnovConformalInvariance, SmirnovConformalInvarianceAnnals, ChelkakSmirnovIsing}.
In contrast, our method gives the convergence for any given number of interfaces via a global approach.

In the case of the critical Ising model with alternating boundary conditions, K.~Izyurov proved
that the collection of any number $N$ of interfaces converges to a multiple $\SLE$ process in a local sense~\cite{IzyurovIsingMultiplyConnectedDomains}.
In the present article, we condition the interfaces to forming a given connectivity pattern and prove the convergence of
the interfaces as a whole global collection of curves, which we know by Theorem~\ref{thm::global_unique} to be given by the unique global $N$-$\SLE_3$.
This is the content of Section~\ref{subsec::ising_multiple_cvg}, where we prove Proposition~\ref{prop::ising_multiple_cvg}.
We are also able to determine the marginal law of one curve in this scaling limit.
The case of two curves was considered in~\cite{WuHyperSLE}: in this case, the marginal law is
also called a hypergeometric $\SLE$.

In~\cite[Sections~5~and~6]{PeltolaWuGlobalMultipleSLEs}, the authors discussed multiple level lines of the Gaussian free field with alternating boundary data.
These level lines give rise to global multiple $\SLE_4$ curves (with any connectivity pattern).
In this particular case, the marginal law of one curve in the global multiple $\SLE_4$ degenerates to a certain $\SLE_4(\rho)$ process.
In general, however, the marginal laws of single curves in global multiple $\SLE$s are not $\SLE_\kappa(\rho)$ processes, but 
certain more general variants of the chordal $\SLE_\kappa$. We refer to~\cite[Section~3]{PeltolaWuGlobalMultipleSLEs} for more details.

\bigbreak

\textbf{Notation and terminology.}
We will use the following notions throughout.
For notational simplicity, we only consider the square lattice $\Z^2$.
Two vertices $v$ and $w$ are said to be \textit{neighbors} if their Euclidean
distance equals one, which we denote by $v \sim w$.
For a finite subgraph $\graph = (V(\graph), E(\graph))$ of $\Z^2$, we denote by $\partial \graph$ the inner boundary of $\graph$:
\begin{align*}
  \partial \graph = \{ v \in V(\graph) \, \colon \, \exists \; w \not\in V(\graph) \text{ such that }\edge{v}{w}\in E(\Z^2)\} .
\end{align*}
As an abuse  of notation,  we sometimes let  $\graph$ also
denote the simply connected domain formed  by all of the faces, edges,
and vertices of $\graph$.

In the case of the square lattice, the \textit{dual lattice} $(\Z^2)^*$ is just a translated version of $\Z^2$. More precisely,
$(\Z^2)^*$ is the dual graph of $\Z^2$: its vertex set is $(1/2, 1/2)+\Z^2$ and its edges are given by
all pairs $(v_1, v_2)$ of vertices that are neighbors. 
The vertices and edges of $(\Z^2)^*$ are called dual-vertices and dual-edges.
In particular, for each edge $e$ of $\Z^2$, we associate a dual-edge, denoted by $e^*$, that crosses $e$ in the middle.
For a 
subgraph $\graph$ of $\Z^2$, we define $\graph^*$ to be the subgraph of $(\Z^2)^*$ with edge set $E(\graph^*)=\{e^*: e\in E(\graph)\}$ and
vertex set given by the endpoints of these dual-edges.

Finally, the \textit{medial lattice} $(\Z^2)^{\diamond}$ is the graph with the centers
of edges of $\Z^2$ as the vertex set, and edges 
given by all pairs of vertices that are 
neighbors. In the case of the square lattice, the medial lattice is a rotated and rescaled version of $\Z^2$.
We identify the faces of $(\Z^2)^{\diamond}$ with the vertices of $\Z^2$ and $(\Z^2)^*$.

Now, suppose that $\graph$ is  a  finite connected  subgraph  of the  (possibly
translated, rotated, and rescaled) square lattice $\Z^2$ such that the
complement of $\graph$ is also  connected (this means that $\graph$ is
simply connected). Then, we call a  triple $(\graph; v, w)$ with $v, w
  \in  \partial \graph$ distinct boundary vertices \textit{a discrete  Dobrushin domain}.  We note
that the boundary  $\partial \graph$ is divided into  two arcs $(v \, w)$
and $(w \, v)$. More generally, given distinct boundary vertices
$v_1, \ldots, v_{2N} \in \partial  \graph$ in counterclockwise order, we call the $(2N+1)$-tuple
$(\graph;  v_1,  \ldots,  v_{2N})$  \textit{a  discrete
  polygon}. In this case, the boundary $\partial \graph$ is divided into
$2N$ arcs.

\smallbreak

In  this article,  we consider  scaling limits  of models  on
discrete lattices with mesh size tending to zero. We only consider the
following  \textit{square  lattice  approximations}, even  though  the  results
discussed  in  this  section  hold   in  a  more  general  setting  as
well, see~\cite{ChelkakSmirnovIsing}.   For   small  $\delta>0$,   we   let
$\Omega^{\delta}$ denote a finite subgraph  of the rescaled square lattice
$\delta\Z^2$.  Like $\Omega^{\delta}$,  we decorate  its vertices  and
edges with the mesh size $\delta$ as a superscript.
The definitions of the dual lattice $\Omega^{\delta}_* := (\Omega^{\delta})^*$, the medial lattice $\Omega^{\delta}_{\diamond} := (\Omega^{\delta})^{\diamond}$,
and discrete Dobrushin domains and polygons obviously extend to this context.

\begin{definition}
  Let  $(\Omega;  x_1,  \ldots,  x_{2N})$   be  a  bounded  polygon  and consider a sequence
  $( (\Omega^{\delta}; x_1^{\delta}, \ldots,  x_{2N}^{\delta}) )_{\delta > 0}$
  of  discrete polygons.  We say  that $(\Omega^{\delta};  x_1^{\delta},
    \ldots, x_{2N}^{\delta})$ converges to $(\Omega; x_1, \ldots, x_{2N})$ as $\delta \to 0$
  \emph{in  the Carath\'{e}odory  sense} if  there exist  conformal maps
  $f^{\delta}$  \textnormal{(}resp.~$f$\textnormal{)}  from  the  unit  disc  $\U=\{z\in\C  \colon
    |z|<1\}$ to $\Omega^{\delta}$ \textnormal{(}resp.~from  $\U$ to $\Omega$\textnormal{)} such that
  $f^{\delta}\to f$ uniformly on any compact subset  of $\U$, and for  all $j \in
    \{1, \ldots,2N\}$,   we   have  $\underset{\delta   \to   0}{\lim}
    (f^\delta)^{-1}(x_j^{\delta}) = f^{-1}(x_j)$.
\end{definition}

\subsection{Random-Cluster Models}
\label{subsec::fkperco}

Let $\graph = (V(\graph), E(\graph))$ be a finite subgraph of $\Z^2$.
A \textit{(percolation) configuration} $\omega=(\omega_e)_{e \in E(\graph)}$ is an element of $\{0,1\}^{E(\graph)}$.
If $\omega_e=1$, the edge $e$ is said to be \textit{open}, and otherwise, $e$ is said to be \textit{closed}. The configuration $\omega$ can be seen
as a subgraph of $\graph$ with the same set of vertices $V(\graph)$  and 
whose edges are the open edges $\{e\in E(\graph) \colon \omega_e=1\}$.
We denote by $o(\omega)$ (resp.~$c(\omega)$) the number of open (resp.~closed) edges of~$\omega$.

We are interested in the connectivity properties of the graph $\omega$. The maximal connected components of $\omega$ are called \textit{clusters}.
Two vertices $u$ and $v$ are connected by $\omega$ inside $S\subset \Z^2$ if there exists a sequence  $\{v_j \colon 0\le j\le k\}$  of vertices in $S$ such that
$v_0=u$, $v_k=v$, and each edge $\edge{v_j}{v_{j+1}}$ is open in $\omega$ for $0\le j < k$.

We may also impose to our model various boundary conditions, which can be understood as encoding how the sites are connected outside $\graph$.
A \textit{boundary condition} $\xi$ is a partition $P_1\sqcup \cdots\sqcup P_k$ of $\partial \graph$.
Two vertices are said to be \textit{wired} in $\xi$ if they belong to the same $P_j$
and \textit{free} otherwise.
We denote by $\omega^{\xi}$ the (quotient) graph obtained from the configuration $\omega$ by identifying the wired vertices together in $\xi$.

The probability measure $\smash{\phi^{\xi}_{p,q,\graph}}$ of the \textit{random-cluster model} on $\graph$ with edge-weight $p\in [0,1]$, cluster-weight $q>0$,
and boundary condition $\xi$, is defined by
\begin{align*}
  \phi^{\xi}_{p,q,\Omega}[\omega]:= \;          & \frac{p^{o(\omega)}(1-p)^{c(\omega)}q^{k(\omega^{\xi})}}{Z^{\xi}_{p,q,\Omega}} ,
    \qquad
  \text{where } \quad Z^{\xi}_{p,q,\Omega} = \sum_{\omega} p^{o(\omega)}(1-p)^{c(\omega)}q^{k(\omega^{\xi})} ,
\end{align*}
where
and $k(\omega^{\xi})$ is the number of 
connected components of the graph $\omega^{\xi}$.
For $q=1$, this model is simply the Bernoulli bond percolation.
For $q=2$, the random-cluster model is also known as the FK-Ising model, closely related to the spin Ising model.
Proposition~\ref{prop::fkising_alternating_cvg} concerns this case.

For a configuration $\xi$ on $E(\Z^2)\setminus E(\graph)$, the boundary condition induced by $\xi$ is defined as the partition
$P_1\sqcup \cdots \sqcup P_k$, where two vertices belong to the same $P_j$ if and only if there exists an open path in $\xi$ connecting them.
We identify the boundary condition induced by $\xi$ with the configuration itself, and denote the measure of the random-cluster model with such boundary conditions by
$\smash{\phi^{\xi}_{p,q,\graph}}$. As a direct consequence of these definitions, we have the following domain    Markov    property.
Suppose that $\graph \subset \graph'$ are two finite subgraphs of $\Z^2$.
Fix $p\in [0,1]$, $q>0$, and a boundary condition $\xi$ on $\partial \graph'$.
Let $X$ be a random variable, which is measurable with respect to the status of the edges in $\graph$.
Then, for all $\psi\in \{0,1\}^{E(\graph')\setminus E(\graph)}$,  we have
\begin{align*}
  \phi^{\xi}_{p,q,\graph'}\left[X \; | \; \omega_e = \psi_e  \text{ for all } e \in E(\graph')\setminus E(\graph)\right] = \phi^{\psi^{\xi}}_{p,q,\graph}[X],
\end{align*}
where $\psi^{\xi}$ is the partition on $\partial \graph$ obtained by wiring two vertices in $\partial \graph$  if they are connected in $\psi$.

\smallbreak

We define an ordering for configurations as follows.
For $\omega, \omega'\in \{0,1\}^{E(\graph)}$, we denote by $\omega\le \omega'$ if $\omega_e\le \omega'_e$ for all $e\in E(\graph)$.
An event $\LA$ depending
on the edges in $E(\graph)$ is said to be \textit{increasing} if, for any $\omega\in \LA$,
the inequality $\omega\le \omega'$ implies that $\omega'\in \LA$.
When $q\ge 1$, the following FKG inequality (positive association) holds.
Fix $p\in [0,1], q\ge 1$, and a boundary condition $\xi$ on $\partial \graph$.
Then, for any two increasing events $\LA$ and $\LB$, we have
\[\phi^{\xi}_{p,q,\graph}[\LA\cap \LB]\ge \phi^{\xi}_{p,q,\graph}[\LA]\phi^{\xi}_{p,q,\graph}[\LB].\]
Consequently, for any boundary conditions $\xi\le\psi$ and for any increasing event $\LA$, we have
\begin{align}\label{eqn::rcm_boundary_comparison}
  \phi^{\xi}_{p,q,\graph}[\LA]\le \phi^{\psi}_{p,q,\graph}[\LA].
\end{align}

We denote by $\phi^{0}_{p,q,\graph}$ the probability measure of the random-cluster model
with \textit{free boundary conditions}, where the partition $\xi$ of $\partial \graph$ consists of singletons only.
We denote by $\phi^{1}_{p,q,\graph}$ the probability measure of the random-cluster model with \textit{wired boundary conditions}, where the partition $\xi$ of $\partial\graph$ is the whole set $\partial\graph$.
In the sense of~\eqref{eqn::rcm_boundary_comparison}, $\phi^0_{p,q,\graph}$ is minimal and $\phi^1_{p,q,\graph}$ is maximal.

A configuration $\omega$ on $\graph$ can be uniquely associated to a dual configuration $\omega^*$ on the dual graph $\graph^*$,
defined by $\omega^*(e^*)=1-\omega(e)$ for all $e\in E(\graph)$. A dual-edge $e^*$ is said to be \textit{dual-open} if $\omega^*(e^*)=1$
and \textit{dual-closed} otherwise. A \textit{dual-cluster} is a connected component of $\omega^*$.
We extend the notions of dual-open paths and connectivity events in the obvious way.
Now, if $\omega$ is distributed according to $\smash{\phi^{\xi}_{p,q,\graph}}$, then $\omega^*$ is distributed according to $\smash{\phi^{\xi^*}_{p^*, q^*, \graph^*}}$, with
\[q^*=q \qquad \text{and} \qquad \frac{pp^*}{(1-p)(1-p^*)}=q.\]
Note that, at the self-dual point $p^*=p$, we have
\[ p=p_c(q):=\frac{\sqrt{q}}{1+\sqrt{q}} . \]
For this critical case
$p=p_c(q)$, we have the following Russo-Seymour-Welsh (RSW) estimate.
For a rectangle $R = [a,b] \times [c,d] \subset \Z^2$, we let $\LC_{\textnormal{hor}}(R)$ denote the event that
there exists an open path in $R$ from $\{a\} \times [c,d]$ to $\{b\}\times [c,d]$.
For the probability of this event, we have a lower bound which is uniform in the
size of the rectangle (but which depends on the shape, and is not expected to hold for $q=4$):

\begin{proposition}\label{prop::rcm_rsw_rectangle}
  \textnormal{\cite[Theorem~7]{DuminilSidoraviciusTassionContinuityPhaseTransition}}
  Let $1\le q<4$ and $u>0$, and denote by $R_n^{u}$ the rectangle $[[0,u n]]\times [[0,n]]$ for $n\ge 1$.
  Then, there exists a constant $\theta(u)>0$ such that
  \begin{align*} 
    \phi^0_{p_c(q), q, R_n^u}\left[\LC_{\textnormal{hor}}(R_n^{u})\right]\ge \theta(u) \qquad \text{for any $n\ge 1$}.
  \end{align*}
\end{proposition}


\begin{figure} [h]
  \begin{center}
    \includegraphics[width=.6\textwidth]{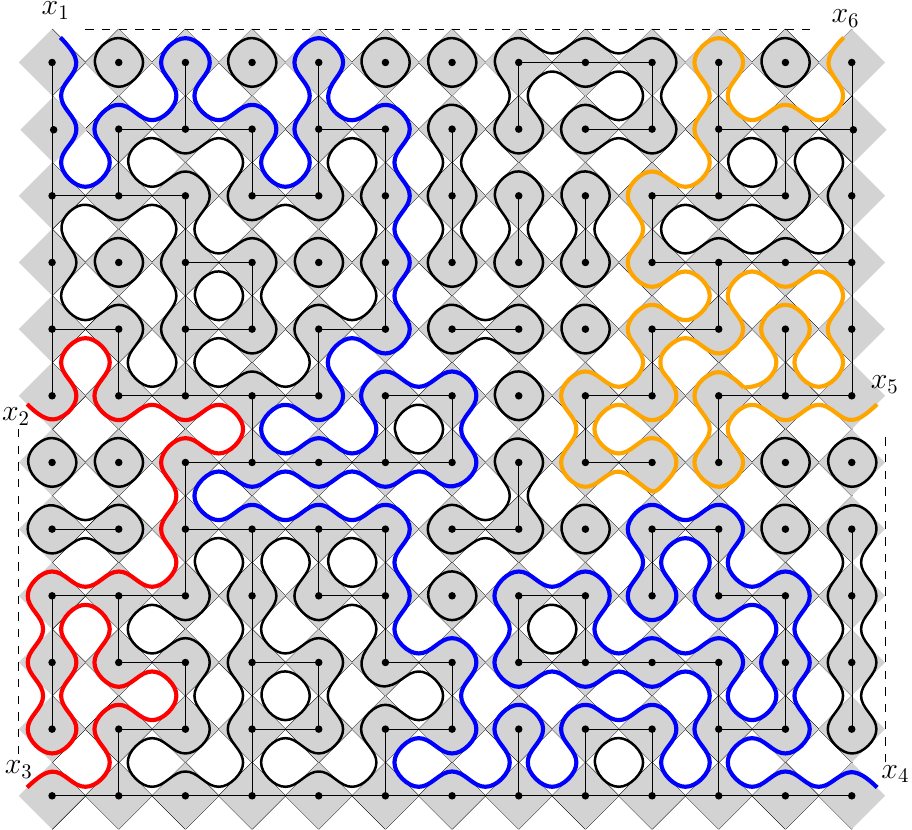}
  \end{center}
  \caption{The loop representation of a configuration of the random-cluster model on a polygon with six marked points $x_1, \ldots, x_6$ on the boundary, with
    alternating boundary conditions. There are three interfaces connecting the marked boundary points, illustrated in red, blue, and orange, respectively.}
  \label{fig::FK_loop_representation}
\end{figure}

\smallbreak

Next, we consider interfaces.
If $(\graph; u, v)$ is a discrete Dobrushin domain, in the \textit{Dobrushin boundary conditions} for the random-cluster model,
all edges along the arc $(v \, u)$ are wired and all edges along $(u \, v)$ are free. 
Then, for each vertex $w$ of the medial graph $\graph^{\diamond}$, there exists either an open edge of $\graph$ or a dual-open edge of $\graph^*$
passing through $w$. In addition, we can draw self-avoiding loops on $\graph^{\diamond}$ as follows: a loop arriving at a vertex of the medial lattice always makes
a turn of $\pm\pi/2$, so as not to cross the open or dual-open edges through this vertex.
The loop representation contains loops and the self-avoiding path connecting two vertices $u^{\diamond}$ and $v^{\diamond}$
of the medial graph $\graph^{\diamond}$ that are closest to $u$ and $v$. This curve is called the \textit{interface} (the exploration path)
of the random-cluster model.

At the critical point $p = p_c(q)$, this interface is expected to converge weakly in the scaling limit
to the chordal $\SLE_\kappa$ curve, with $\kappa$ specifically given by $q$.
The convergence has been rigorously established for the special case of $q=2$, also known as the FK-Ising
model~\cite{CDCHKSConvergenceIsingSLE}, in the topology of Section~\ref{subsec::intro_Ising}.

\begin{conjecture}\label{conj::fkperco_cvg}
  \textnormal{(See, e.g.,~\cite{SchrammICM}.)}
  Let $0\le q\le 4$ and $p=p_c(q)$. Let $(\Omega^{\delta}; x^{\delta}, y^{\delta})$ be a sequence of discrete Dobrushin domains converging to
  a Dobrushin domain $(\Omega; x, y)$ in the Carath\'eodory sense.
  Then, as $\delta\to 0$, the interface of the critical random-cluster model on $(\Omega^{\delta}; x^{\delta}, y^{\delta})$,
  with cluster weight $q$ and  Dobrushin boundary conditions,
  converges weakly to the chordal $\SLE_{\kappa}$ connecting $x$ and~$y$ with
  \begin{align}\label{eqn::relation_q_kappa}
    \kappa=\frac{4\pi}{\arccos(-\sqrt{q}/2)}.
  \end{align}
\end{conjecture}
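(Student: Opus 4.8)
The plan is to follow the by-now standard two-step route to convergence of a lattice interface toward $\SLE_\kappa$: first establish that the laws of the interfaces $\eta^{\delta}$ form a precompact family in the metric topology of $(X,d)$, and then identify each subsequential limit as the chordal $\SLE_\kappa$ by exhibiting a conformally covariant martingale observable. The first step is genuinely within reach with the tools recorded above; the second is the crux, and is the reason the statement is stated here only as a conjecture.

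For precompactness I would appeal to the framework of Kemppainen and Smirnov~\cite{KemppainenSmirnovRandomCurves}, for which it suffices to verify their geometric crossing estimate (``Condition~C2''), bounding the probability that the interface makes an unforced crossing back and forth across a topological annulus. For $1 \le q < 4$ this estimate is supplied by the generalized Russo--Seymour--Welsh bound of Proposition~\ref{prop::rcm_rsw_rectangle}: the uniform crossing probability $\theta(\eps)$ for rectangles of any fixed aspect ratio, combined with the boundary-condition comparison~\eqref{eqn::rcm_boundary_comparison} to control the worst-case boundary data induced along the exploration, yields the required annulus bound uniformly in $\delta$. Both inputs require $q \ge 1$, so for $q < 1$, where FKG and RSW fail, even precompactness is problematic and would need a different argument. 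By~\cite[Theorem~1.5 and Corollary~1.7]{KemppainenSmirnovRandomCurves} this gives tightness of $\{\eta^{\delta}\}$ together with the convergence, along subsequences, of the Loewner driving functions to a continuous process; it then remains only to show that this process is $\sqrt{\kappa}$ times a standard Brownian motion.

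For the identification step the natural object is Smirnov's parafermionic observable on the medial lattice $\graph^{\diamond}$. To each oriented medial edge one attaches the random-cluster partition sum weighted by a phase $e^{-i\sigma W}$, where $W$ is the winding of the interface from the source $x^{\delta}_{\diamond}$ to that edge and the spin is
\[ \sigma = \frac{8}{\kappa} - 1 = 1 - \frac{2}{\pi}\arccos\Big(\frac{\sqrt{q}}{2}\Big) \]
by~\eqref{eqn::relation_q_kappa} (so $\sigma = 1/2$ precisely at $q=2$). This observable obeys a local relation at every interior medial vertex --- a discrete contour identity encoding the integrability of the model --- and, crucially, it is a martingale under the exploration of the interface. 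Were the renormalized observable to converge to a holomorphic function of the domain with the prescribed conformal covariance and boundary behaviour, the martingale property, transported through the Loewner flow, would force the driving function to have quadratic variation $\kappa t$ and hence to be $\sqrt{\kappa}$ times a Brownian motion, identifying the limit uniquely as chordal $\SLE_\kappa$ and closing the argument.

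The hard part is exactly this passage to the limit of the observable for general $q$, and it is where the conjecture remains open. The local vertex relation fixes only one real linear combination of the incident edge values at each medial vertex; for spin $\sigma \neq 1/2$ this is too weak to recover a discrete harmonic conjugate, so the observable is not known to be even approximately discrete holomorphic, and no compactness or convergence statement for it is available. The single value $\sigma = 1/2$, i.e.\ $q = 2$, is special: there the observable becomes s-holomorphic in the sense of Chelkak--Smirnov, its discrete primitive is approximately discrete harmonic with mixed Dirichlet--Neumann data, and convergence follows by discrete potential theory~\cite{ChelkakSmirnovIsing, CDCHKSConvergenceIsingSLE}; the value $q=1$ ($\kappa = 6$) is reached instead through the independent color-switching and locality arguments for critical percolation~\cite{SmirnovPercolationConformalInvariance, CamiaNewmanPercolation}. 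For the remaining $q \in [0,4]$ a proof of discrete holomorphicity, or any substitute for it, is missing, and producing one is precisely the content of the conjecture.
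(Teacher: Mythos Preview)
The statement you were asked to prove is labeled in the paper as a \emph{Conjecture}, and the paper makes no attempt to prove it: it is recorded with a reference to Schramm's ICM problem list and then used as a conditional hypothesis in Proposition~\ref{prop::global_multiple_sle_beyond4}. The only case the paper asserts as established is $q=2$, $\kappa=16/3$, quoted as Theorem~\ref{thm::fkising_cvg} from~\cite{CDCHKSConvergenceIsingSLE}. So there is no ``paper's proof'' to compare against.

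Your write-up is therefore the right kind of response: it is not a proof but an accurate account of the standard two-step strategy (precompactness via RSW and the Kemppainen--Smirnov framework, identification via a martingale observable), together with a correct diagnosis of exactly where and why the argument breaks for general $q$. Your remarks that the RSW input of Proposition~\ref{prop::rcm_rsw_rectangle} and the FKG comparison~\eqref{eqn::rcm_boundary_comparison} require $q\ge 1$, that the parafermionic observable is genuinely discrete-holomorphic only at spin $\sigma=1/2$ (i.e.\ $q=2$), and that $q=1$ is handled by independent percolation arguments, all match the state of the art the paper implicitly relies on. In short: there is no gap in your reasoning relative to what is known, and there is nothing in the paper to contrast it with, because the paper treats the statement as open.
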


\begin{theorem}\label{thm::fkising_cvg}
  \textnormal{\cite[Theorem~2]{CDCHKSConvergenceIsingSLE}}
  Conjecture~\ref{conj::fkperco_cvg} holds for $q=2$ and $\kappa=16/3$.
\end{theorem}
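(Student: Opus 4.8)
The statement is the convergence of a single FK-Ising interface, so the plan is to follow Smirnov's discrete-holomorphicity and martingale-observable strategy, combined with a precompactness argument. The central object is the \emph{fermionic observable} of spin $1/2$, a complex function $F^{\delta}$ defined on the medial lattice $\Omega^{\delta}_{\diamond}$ of the discrete Dobrushin domain. For a medial vertex $z$ one sets $F^{\delta}(z)$ to be a signed sum, over the loop-representation configurations, of $e^{-\frac{i}{2}W(z)}$, where $W(z)$ is the winding of the interface from $x^{\delta}_{\diamond}$ to $z$. The first step is the key algebraic identity, \emph{s-holomorphicity}: at the self-dual point $p = p_c(2)$ the function $F^{\delta}$ satisfies a discrete Cauchy--Riemann relation together with a projection constraint fixing its values along medial lines. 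This follows from a local rearrangement of the loop configurations around each medial vertex and is special to $q = 2$.

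Next I would pin down the boundary data. Along the wired and free boundary arcs the winding of the interface is deterministic, so $F^{\delta}$ takes explicit boundary values with prescribed phases. Combined with s-holomorphicity, this rigidity lets one run the discrete complex analysis machinery: one shows that the discrete primitive of $(F^{\delta})^2$ converges, uniformly on compact subsets of $\Omega$, to a conformal map $\phi$ sending $(\Omega; x, y)$ to a horizontal strip with the appropriate normalization. It follows that $F^{\delta} \to \sqrt{\phi'}$, the continuous holomorphic observable singled out by the same boundary behavior.

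I would then exploit the martingale property. By the domain Markov property of the random-cluster model, if the interface is explored up to a stopping time, the observable in the unexplored domain is a conditional expectation, so a suitably stopped observable is a martingale. Passing to the limit through the convergence of the observable, one extracts from the limiting holomorphic martingale an identity for the Loewner driving function $W_t$ of the conformally mapped interface, and L\'evy's characterization forces $W_t = \sqrt{16/3}\,B_t$. To turn this identification of the driving function into convergence of the curves in the metric of Equation~\eqref{eq::curve_metric}, I would invoke precompactness: the generalized RSW bounds for critical FK-Ising (Proposition~\ref{prop::rcm_rsw_rectangle} with $q = 2$) verify the Kemppainen--Smirnov ``Condition~C2'', which yields tightness of the interface laws and ensures that every subsequential limit is a Loewner chain driven by a continuous function.

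The hard part is the discrete complex analysis input: proving that s-holomorphicity together with the boundary data forces $F^{\delta} \to \sqrt{\phi'}$ with uniform control up to the boundary near the endpoints $x$ and $y$, where the observable develops square-root singularities. Establishing the required a priori regularity estimates for discrete holomorphic functions, and reconciling them with the RSW-based precompactness needed to guarantee that the limit curve is nondegenerate, is the principal technical obstacle.
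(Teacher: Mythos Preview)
Your outline is a faithful high-level summary of the Smirnov--Chelkak fermionic-observable proof; there is nothing to compare against here, since the paper does not prove Theorem~\ref{thm::fkising_cvg} at all but simply cites it from~\cite{CDCHKSConvergenceIsingSLE} (building on~\cite{ChelkakSmirnovIsing}). So in effect you have sketched the proof from the cited reference rather than from the present paper, which treats this result as a black box.
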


\subsection{Existence of Global Multiple $\SLE$s with $\kappa\in (4,6]$}
\label{subsec::globalexistencebeyond4}

We consider the convergence of random-cluster interfaces in the following setup.
Abusing and lightening notation, let us write $x^{\delta}$ for both $x^{\delta}$ and $(x^{\diamond})^{\delta}$
(converging to the same point $x$ as $\delta \to 0$).
Let the polygons $(\Omega^{\delta}; x_1^{\delta}, \ldots, x_{2N}^{\delta})$
converge to $(\Omega; x_1, \ldots, x_{2N})$ as $\delta\to 0$ in the Carath\'eodory sense.
Consider the critical random-cluster model on $\Omega^{\delta}$ with alternating boundary conditions~\eqref{eq::FK_alternating}.
With such boundary conditions, there are $N$ interfaces $(\eta^{\delta}_1, \ldots, \eta_N^{\delta})$
connecting pairwise the $2N$ boundary points $x_1^{\delta}, \ldots, x_{2N}^{\delta}$,
as illustrated in Figure~\ref{fig::FK_loop_representation}.
These interfaces form a planar connectivity encoded in a link pattern $\conn^{\delta} \in \LP_N$.
We  consider   the  interfaces   conditionally  on  forming   a  given
connectivity  $\conn^{\delta} =  \alpha  =  \{ \link{a_1}{b_1},  \ldots, \link{a_N}{b_N}    \}  \in \LP_N$.

Conjecturally,
conditionally on the event $\{\conn^{\delta}=\alpha\}$,
the law of the collection $(\eta^{\delta}_1, \ldots, \eta_N^{\delta})$ converges weakly as $\delta \to 0$ to a global $N$-$\SLE_{\kappa}$ associated to $\alpha$,
where $\kappa$ is determined by $q$ via~\eqref{eqn::relation_q_kappa}.
In this section, we will prove this statement for the case of $q=2$ (so $\kappa = 16/3$) --- this is the content of Proposition~\ref{prop::fkising_alternating_cvg}.
The main inputs to the proof are Theorem~\ref{thm::fkising_cvg} concerning convergence of one interface,
classification of multiple $\SLE_{16/3}$ measures analogously to Theorem~\ref{thm::global_unique},
and the RSW estimate from Proposition~\ref{prop::rcm_rsw_rectangle}.

\begin{proof}[Proof of Proposition~\ref{prop::fkising_alternating_cvg}]
  Conditionally     on      $\{\conn^{\delta}=\alpha\}$,     we     have
  \begin{align*}
    (\eta_1^{\delta},            \ldots,            \eta_N^{\delta})\in
    X_0^{\alpha}(\Omega^{\delta};          x_1^{\delta},         \ldots,
    x_{2N}^{\delta}) \qquad \text{for all } \delta>0 .
  \end{align*}

  First of all, the collection of laws  of the sequence
  $( (\eta_1^{\delta},   \ldots,   \eta_N^{\delta}) )_{\delta>0}$   is
  relatively         compact;         indeed,
  the RSW  estimate
  in Proposition~\ref{prop::rcm_rsw_rectangle}
  implies   the  relative  compactness  by the results
  in~\cite{AizenmanBurchardHolderRegularity, KemppainenSmirnovRandomCurves}
  (see Lemma~\ref{lem::tightness} below).
  Thus, there exist subsequential limits,  and we may assume that, for
  some    sequence $\delta_n \overset{n \to \infty}{\longrightarrow} 0$, 
  the         sequence  $(\eta_1^{\delta_n},\ldots, \eta_N^{\delta_n})$  converges weakly to
  $(\eta_1, \ldots, \eta_N)$.  For convenience, we couple  them in the
  same probability space so that they converge almost surely. Also, to
  lighten notation, we replace  the superscripts $\delta_n$ by the
  superscript  $n$  here  and  in  what  follows.  For  each
  $j\in\{1,  \ldots,  N\}$,  we  let $D_j^{n}$  denote  the  connected
  component of  $\Omega^{n}  \setminus   \bigcup_{i  \neq  j}  \eta_i^{n}$
  having  $x_{a_j}^{n}$ and  $x_{b_j}^{n}$ on its  boundary (see Figure~\ref{fig::pinching}(left)).

  In Lemma~\ref{lem::cvg_domains_Dobrushin}, we show that as $n\to\infty$, the discrete
  Dobrushin  domains  $(D_j^{n}; x_{a_j}^{n},  x_{b_j}^{n})$  converge
  almost surely to some random  Dobrushin domains in  the Carath\'{e}odory
  sense. Notice  that it is not  clear \emph{a priori} that the limit  of $D_j^{n}$ is
  still simply connected, as the interfaces in the limit may touch the
  boundary, and they  may have multiple points. The task in the
  proof of Lemma~\ref{lem::cvg_domains_Dobrushin} is therefore to rule
  out this behavior.
  We establish this by using the RSW  estimate from Proposition~\ref{prop::rcm_rsw_rectangle}.
  Specifically,  we show that  the limit
  domain $(D_j;  x_{a_j}, x_{b_j})$ is the  simply connected subdomain
  $D_j$     of     $\Omega\setminus\bigcup_{i\neq    j}\eta_i$     with
  $x_{a_j}$ and $x_{b_j}$          on         its         boundary. As a by-product,
  Lemma~\ref{lem::cvg_domains_Dobrushin}      also     shows      that
  $(\eta_1,  \ldots,  \eta_N)  \in X_0^{\alpha}(\Omega;  x_1,  \ldots,
    x_{2N})$ almost surely.

  Then, in Lemma~\ref{lem::cvg_polygon_global_multiple_subtle}  we prove
  that the subsequential limit $(\eta_1, \ldots, \eta_N)$ must be a global multiple $\SLE_{16/3}$.
  This also shows that global multiple $\SLE_{16/3}$ exists.
  Finally, in 
  Proposition~\ref{prop::globalexistence_beyond4}
  in Section~\ref{subsec::globaluniquebeyond4},
  we prove that such a global multiple $\SLE_{16/3}$ is unique, thus being
  the  unique subsequential  limit. This  gives the  convergence of  the sequence and concludes the proof of Proposition~\ref{prop::fkising_alternating_cvg}.
\end{proof}

Next, we prove the auxiliary results needed to finish the proof of Proposition~\ref{prop::fkising_alternating_cvg}.
We formulate some of them for general $q\in [1,4)$ (i.e., $\kappa\in (4,6]$), given Conjecture~\ref{conj::fkperco_cvg},
and discuss in Remark~\ref{rem::summary} the scope of these results.

\begin{lemma} \label{lem::tightness}
  Suppose Conjecture~\ref{conj::fkperco_cvg} holds for some $q\in [1,4)$ and let $\kappa\in (4,6]$ be the value related to $q$ via~\eqref{eqn::relation_q_kappa}.
  Then, the collection of laws of the sequence $( (\eta_1^{\delta},   \ldots,   \eta_N^{\delta}) )_{\delta>0}$ is relatively compact.
\end{lemma}
\begin{proof}
  The RSW estimate in Proposition~\ref{prop::rcm_rsw_rectangle} shows that the single FK-Ising interface with Dobrushin boundary conditions
  satisfies the so-called condition ``G2'' in \cite{KemppainenSmirnovRandomCurves}, and thus,
  its law is relatively compact, as stated in~\cite[Theorem~6]{DuminilSidoraviciusTassionContinuityPhaseTransition}.
  This can be generalized to conclude that also the sequence
  $( (\eta_1^{\delta},   \ldots,   \eta_N^{\delta}) )_{\delta>0}$
  of multiple interfaces with alternating boundary conditions
  is relatively compact --- see~\cite[Theorem~4.1]{Karrila19} for details.
\end{proof}

\begin{figure}[h]
  \includegraphics[width=\textwidth]{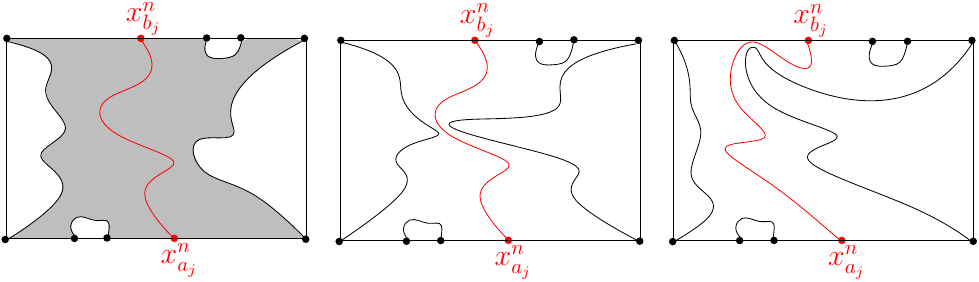}
  {\caption{\label{fig::pinching} In the left panel, the red curve is the
      interface connecting $x_{a_j}$ and $x_{b_j}$, and the gray part is
      $D_j^n$. The middle panel depicts a \textit{bulk pinching scenario}, where
      around the bulk pinching point, an interior six-arm event
      (with alternating pattern) occurs. The right panel depicts a
      \textit{boundary pinching scenario}, where around the boundary pinching
      point, a near-boundary three-arm event (with alternating
      pattern) occurs. We show in the proof of
      Lemma~\ref{lem::cvg_domains_Dobrushin} that  these events will not survive
      in the scaling limit. It turns out that in our case, the RSW
      Proposition~\ref{prop::rcm_rsw_rectangle} is sufficient for this purpose;
      note, however, that usually such bulk pinching events are ruled out by a
      six-arm exponent argument.}}
\end{figure}

\begin{lemma}\label{lem::cvg_domains_Dobrushin}
  Suppose Conjecture~\ref{conj::fkperco_cvg} holds for some $q\in [1,4)$ and let $\kappa\in (4,6]$ be the value related to $q$ via~\eqref{eqn::relation_q_kappa}.
  As  $n \to  \infty$, for
  each  $j\in\{1,   \ldots,  N\}$,   the  discrete   Dobrushin  domain
  $(D_j^{n}; x_{a_j}^{n}, x_{b_j}^{n})$ converges almost surely to the
  Dobrushin domain  $(D_j; x_{a_j}, x_{b_j})$ in  the Carath\'{e}odory
  sense.
\end{lemma}

\begin{proof}
  Fix $j\in\{1,   \ldots,  N\}$.
  As $n \to \infty$, the domains $(D_j^{n}; x_{a_j}^{n},
    x_{b_j}^{n})$ can fail to converge to a Dobrushin domain only if the limit
  domain $D_j$ is not simply connected. There are two scenarios when this
  could happen, both resulting from specific behavior of the other
  interfaces $\eta_i^{n}$ with $i \neq j$:
  either two of these interfaces get close together in the interior of $\Omega^n$, pinching $\eta_j^{n}$ in between
  (see   Figure~\ref{fig::pinching} (middle)), or
  one of these interfaces gets close to the  boundary of $\Omega^n$, pinching $\eta_j^{n}$ to the boundary
  (see Figure~\ref{fig::pinching}(right)).
  In both cases, 
  the points $x_{a_j}^{n}$ and $x_{b_j}^{n}$ get disconnected in
  the limit $n \to \infty$. We call the former a \textit{bulk pinching
    scenario} and the latter a \textit{boundary pinching scenario}.

  \begin{figure}[h]
    \includegraphics[width=\textwidth]{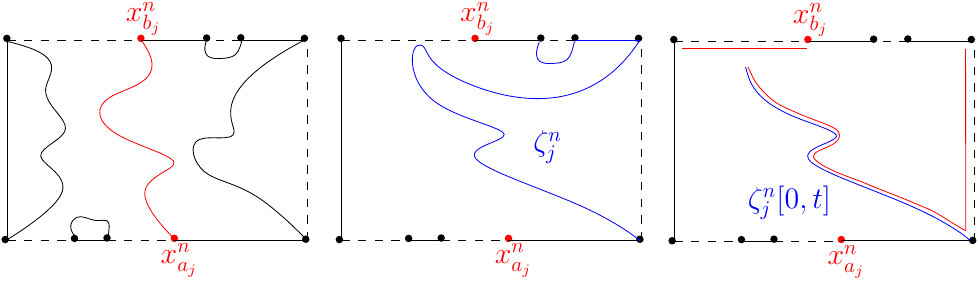}
    \caption{\label{fig::boundarypinching}
      A typical boundary pinching scenario.
    }
  \end{figure}

  First, we consider the boundary pinching scenario. Without loss of generality,
  we may assume that the boundary conditions on $(x^n_{a_j} \, x^n_{a_j+1})$
  are wired, as in Figure~\ref{fig::boundarypinching} (left). Also, it
  suffices to consider the pinching on the boundary arc $(x^n_{b_j} \,
    x^n_{b_j+1})$ and assume that $b_j\ge a_j+2$. Denote by $\LC_j^n$ the event
  that there is an open path connecting $(x^n_{a_j} \, x^n_{a_j+1})$ to
  $(x^n_{b_j-1} \, x^n_{b_j})$ in $\Omega^{n}$. Note that $\{\conn^n=\alpha\}$
  implies the event $\LC_j^n$. Denote the exploration path from $x^n_{a_j+1}$
  to $x^n_{b_j-1}$ by $\path_j^{n}$, as in Figure~\ref{fig::boundarypinching}
  (middle), parameterized  by the number  of steps starting from $x^n_{a_j+1}$.
  For a fixed time $t$, inside the domain
  $\Omega^n \setminus\path_j^{n}[0,t]$, consider the two boundary arcs
  $\partial_1^n := (x^n_{b_j} \, x^n_{b_j+1})$ and $\partial_2^n$ defined as the
  union of the boundary arc $(x^n_{a_j+1} \, x^n_{a_j+2})$ and the right side
  of $\path_j^{n}[0,t]$ --- both carry free boundary conditions, and are drawn
  in red on Figure~\ref{fig::boundarypinching}(right).
  Notice that the event $\LC_j^n$
  implies that there is no dual-open crosscut between $\partial_1^n$ and
  $\partial_2^n$. The gist of the argument is that if a boundary pinching
  occurs, such a crosscut will exist with high probability.

  For all $t \geq 0$, let $d_1^n(t)$ denote the length of the shortest path in
  $D_j^n$ between $\path_j^n(t)$ and $\partial_1^n$ that does not intersect
  $\path_j^n[0,t]$, and set $d_2^n(t) := | x_{b_j}^n - \path_j^n(t) |$   and $\varepsilon_j^n(t) := d_1^n(t)/d_2^n(t)$.
  Then, the RSW estimate from Proposition~\ref{prop::rcm_rsw_rectangle}
  combined with the FKG inequality~\eqref{eqn::rcm_boundary_comparison}
  shows that for some universal constant $C>0$, we have the upper bound
  $\PP[\LC_j^n \; | \; \path_j^n[0,t]] \leq C (\varepsilon_j^n(t))^{1/C}$. Now, for
  $u>0$ small, denote by $T_u$ the first time $t \geq 0$ when $\varepsilon_j^n(t) \leq u$
  (equaling $+\infty$ if no such time exists). From the above bound, we obtain
  \begin{align*}
    \PP \big[ \underset{t \geq 0}{\inf} \, \varepsilon_j^n(t) \leq u \; \big| \; \vartheta^n = \alpha \big]
    \leq \; & \frac{\PP \big[ \LC_j^n \cap \big\{ \underset{t \geq 0}{\inf} \, \varepsilon_j^n(t) \leq u \big\} \big] }{\PP [\vartheta^n = \alpha]} \\
    = \;    &
    \frac{\E \big[ \one_{\{T_u<\infty\}} \, \E[\LC_j^n \; | \; \path_j^n[0,T_u]] \big]} {\PP[\vartheta^n =
      \alpha]}
    \; \leq \; \frac {C u^{1/C}} {\PP[\vartheta^n = \alpha]}.
  \end{align*}
  Proposition~\ref{prop::rcm_rsw_rectangle} (cf. the footnote in the proof of
  Lemma~\ref{lem::positivechance_coincide_general_beyond4}) implies that
  $\PP[\conn^n=\alpha]$ is bounded away from zero uniformly in $n$.
  Therefore,  we have
  \begin{align*}
    \lim_{u\to 0}\limsup_{n\to \infty}\PP \big[ \underset{t \geq 0}{\inf} \, \varepsilon_j^n(t) \le u \; \big| \; \conn^n=\alpha \big] = 0 .
  \end{align*}
  This shows that, in the scaling limit $n\to\infty$, the boundary pinching scenario cannot occur.

  Bulk pinchings can be ruled out as a consequence. Indeed, assume that 
  on the boundary of the domain $\Omega^{n}$, there is a triple of pairs of boundary points
  belonging to the pairing $\vartheta^{n}$ and such that the corresponding
  interfaces, say $\path_1^{n}$, $\path_2^{n}$, and $\path_3^{n}$, are involved in a bulk
  pinching scenario with positive probability (see Figure~\ref{fig::pinching}(middle), where
  $\path_2^{n}$ is colored red). First, explore $\path_1^{n}$; such a bulk pinching can
  then be seen as a boundary pinching in the complement of $\path_1^{n}$, and such
  boundary pinchings are excluded by the previous argument.

  In summary, we have shown that  neither the bulk pinching
  scenario nor the boundary pinching scenario can survive in the scaling limit.
  This shows that  $(D_j^n; x_{a_j}^n, x_{b_j}^n)$ converges almost surely to
  the Dobrushin domain $(D_j; x_{a_j}, x_{b_j})$ in the Carathéodory sense,
  which is what we sought to prove.
\end{proof}

Note that the proof of Lemma~\ref{lem::cvg_domains_Dobrushin} also shows that
almost surely,
\[(\eta_1,   \ldots,  \eta_N)\in   X_0^{\alpha}(\Omega;  x_1,   \ldots, x_{2N}).\]

\begin{lemma}\label{lem::cvg_polygon_global_multiple_subtle}
  In      the      setup     of      the      proof      of
  Proposition~\ref{prop::fkising_alternating_cvg} \textnormal{(}with $q=2$ and $\kappa = 16/3$\textnormal{)},         the        limit
  $(\eta_1, \ldots, \eta_N)$ has the distribution of a global multiple
  $\SLE_{16/3}$.
\end{lemma}

\begin{proof}
  We need to prove that, for each $j\in\{1, \ldots, N\}$, the conditional law of the random curve $X:=\eta_j$
  given the other random curves $Y:=(\eta_1, \ldots, \eta_{j-1}, \eta_{j+1}, \ldots, \eta_N)$ is the appropriate chordal $\SLE_{16/3}$.
  We fix $j$ and denote
  \begin{align*}
    X^n := \eta_j^n \qquad \text{and} \qquad Y^n := (\eta_1^n, \ldots, \eta_{j-1}^n, \eta_{j+1}^n, \ldots, \eta_N^n) , \qquad n \geq 1 .
  \end{align*}

  By assumption, $(X^n, Y^n)$ converges to $(X, Y)$ in distribution. 
  However, this does not automatically imply the convergence of the conditional distribution of $X^n$ given $Y^n$ to the conditional distribution of $X$ given~$Y$.
  In our case this is true, as we will now prove.
  (See also the discussion in~\cite[Section~5]{GarbanWuDustAnalysisFKIsing}.)

  Recall that we couple all of the random variables 
  $\{ (X^n, Y^n) \colon n \geq 1\}$
  in the same probability space so that they converge almost surely to $(X, Y)$ as 
  $n \to \infty$.
  Now, given  $Y^n$, the random curve  $X^n$ is a FK-Ising interface with Dobrushin boundary conditions in
  the random Dobrushin domain $(D_j^n; x^n_{a_j}, x^n_{b_j})$ by the domain Markov property.
  By  Lemma~\ref{lem::cvg_domains_Dobrushin}, $(D_j^n; x^n_{a_j}, x^n_{b_j})$ converges almost surely to
  the random Dobrushin domain $(D_j; x_{a_j}, x_{b_j})$  in the Carath\'{e}odory sense.
  Thus, almost surely, there exist conformal maps $G^n$ (resp.~$G$) from $\U$ onto $D_j^n$ (resp.~$D_j$) such that, as 
  $n \to \infty$,
  the maps $G^n$ converge to $G$ uniformly on compact subsets of $\U$, and we have
  $(G^n)^{-1}(x_{a_j}^{n}) \to G^{-1}(x_{a_j}) = 1$ and $(G^n)^{-1}(x_{b_j}^{n}) \to G^{-1}(x_{b_j}) = -1$.
  Furthermore, for each $n$, the map $G^n$ is a measurable function of $Y^n$, and $G$ is a measurable function of~$Y$.
  To conclude, we use the following two observations.
  \begin{enumerate}

    \item \label{Item1} On the one hand, Theorem~\ref{thm::fkising_cvg}  shows  that the  law  of
          $(G^n)^{-1}(X^n)$ converges  to the chordal  $\SLE_3$ in $\U$  connecting the points $1$  and $-1$.

    \item \label{Item2}
          On the other hand (see also~\cite[Proposition~4.7]{Karrila}), one can show that
          $(G^n)^{-1}(X^n)$ converges  to $G^{-1}(X)$ as follows.
          By assumption, $(X^n, Y^n)$ converges to  $(X, Y)$  almost  surely. Now, we
          send $X^n$ (resp.~$X$) conformally onto $\HH$ and denote by $W^n$ (resp.~$W$) its driving function.
          On the one hand, applying
          \cite[Proposition~4.12, Theorem~1.5, and Corollary~1.7]{KemppainenSmirnovRandomCurves}
          to the critical FK-Ising interfaces  $(X^n)_{n \geq 1}$, we see that $W^n\to W$ locally uniformly.
          On the other hand, applying
          \cite[Proposition~4.12, Theorem~1.5, and Corollary~1.7]{KemppainenSmirnovRandomCurves}
          to  $\{ (G^n)^{-1}(X^n) \}$,
          we see that this collection is tight, and for any convergent subsequence $(G^{n_k})^{-1}(X^{n_k})\to \tilde{\eta}$,
          the curve $\tilde{\eta}$ has a continuous driving function $\widetilde{W}$ such that
          $W^{n_k} \to \widetilde{W}$ locally uniformly (note that this fact is highly non-trivial).
          Combining these two facts, we see that $\widetilde{W}$ coincides with $W$, so $\tilde{\eta}$ coincides with $G^{-1}(X)$.
          In particular, this is the only subsequential limit of the collection 
          $\{ (G^n)^{-1}(X^n) \colon n \geq 1 \}$,
          so we have $(G^n)^{-1}(X^n)\to G^{-1}(X)$ as $n \to \infty$.
  \end{enumerate}

  Combining these observations,
  we see  that  the law  of  $G^{-1}(X)$  is the  chordal
  $\SLE_{16/3}$ in $\U$  connecting $1$ and $-1$. 
  In  particular, the law of $G^{-1}(X)$ is independent of $Y$ with $G$ a measurable function of $Y$.
  Hence, the conditional law  of $X$ given $Y$ is the chordal
  $\SLE_{16/3}$ in $D_j$ connecting the points $x_{a_j}$ and~$x_{b_j}$.
\end{proof}

\subsection{Uniqueness of Global Multiple $\SLE$s with $\kappa\in (4,6]$}
\label{subsec::globaluniquebeyond4}

In this section, we prove that the scaling limit of each subsequence of FK-Ising interfaces is unique, thereby finishing the proof of Proposition~\ref{prop::fkising_alternating_cvg}.
The idea is similar to the proof of Theorem~\ref{thm::global_unique} in Section~\ref{subsec::uniqueness_general}.
In particular, we need analogues of the lemmas appearing in Sections~\ref{subsec::uniqueness_pair} and~\ref{subsec::uniqueness_general}.
Again, we formulate them for general $\kappa\in (4,6]$.

\begin{lemma}\label{lem::sle_positivechance_stay_beyond4}
  Suppose Conjecture~\ref{conj::fkperco_cvg} holds for some $q\in [1,4)$ and let $\kappa\in (4,6]$ be the value related to $q$ via~\eqref{eqn::relation_q_kappa}.
  Let $(\Omega; x, y)$ be a bounded Dobrushin domain. Let $\Omega^L, U \subset \Omega$ be Dobrushin subdomains
  such that $\Omega^L$, $U$, and $\Omega$ agree in a neighborhood of the arc $(y \, x)$.
  Suppose $\gamma \sim \PP(\Omega; x, y)$ and $\eta \sim \PP(U; x, y)$.
  Then, we have
  \begin{align*}
  \PP[\eta\subset\Omega^L]\ge\PP[\gamma\subset\Omega^L].
  \end{align*}
  In particular,  Lemma~\ref{lem::sle_positivechance_stay} holds for the corresponding $\kappa\in (4,6]$.
\end{lemma}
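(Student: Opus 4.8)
The plan is to transfer the statement to the discrete critical random-cluster model, where the comparison is a consequence of positive association (FKG), and then pass to the scaling limit using Conjecture~\ref{conj::fkperco_cvg}. The point of working discretely is that for $\kappa>4$ the clean continuum tools used for $\kappa\le 4$ (the loop-soup coupling of Corollary~\ref{cor::sle_coupling}, full absolute continuity) are unavailable, whereas monotonicity in the random-cluster model is robust. Throughout I use the Dobrushin boundary conditions, for which the wired arc is $(yx)$ --- the arc along which $\Omega^L$, $U$, and $\Omega$ agree --- so that the three domains differ only along the free arc, away from the wired arc.

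First I would fix discrete Dobrushin domains $U^{\delta}\subset\Omega^{\delta}$ approximating $U\subset\Omega$ in the Carath\'eodory sense and sharing the wired arc, and let $\gamma^{\delta}$ (resp.\ $\eta^{\delta}$) be the random-cluster interface in $\Omega^{\delta}$ (resp.\ $U^{\delta}$) with cluster weight $q$ and Dobrushin boundary conditions. Since $U^{\delta}$ is obtained from $\Omega^{\delta}$ by deleting the part $\Omega^{\delta}\setminus U^{\delta}$, which lies along the free arc, the domain Markov property identifies the law of the configuration on $U^{\delta}$ with the law on $\Omega^{\delta}$ conditioned on the event
\[ D := \{\text{all edges of } \Omega^{\delta}\setminus U^{\delta} \text{ are closed}\}; \]
under this conditioning the new part of $\partial U^{\delta}$ carries free boundary conditions, exactly matching the Dobrushin boundary conditions on $U^{\delta}$, and $D$ is a decreasing event.

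The key step is the discrete inequality $\phi_{U^{\delta}}[\eta^{\delta}\subset\Omega^L]\ge\phi_{\Omega^{\delta}}[\gamma^{\delta}\subset\Omega^L]$. To establish it, note that the interface is the outer boundary of the cluster attached to the wired arc $(yx)$, and that this wired cluster is increasing in the configuration. Consequently the event $E:=\{\text{interface}\subset\Omega^L\}$ --- which asks that this wired cluster not reach into $\Omega\setminus\Omega^L$, a region lying on the free side --- is a \emph{decreasing} event. Since $E$ and $D$ are both decreasing, the FKG inequality gives $\phi_{\Omega^{\delta}}[E\cap D]\ge\phi_{\Omega^{\delta}}[E]\,\phi_{\Omega^{\delta}}[D]$, i.e.\ $\phi_{\Omega^{\delta}}[E\mid D]\ge\phi_{\Omega^{\delta}}[E]$; combined with the identification above, this is precisely the claimed inequality.

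Finally I would let $\delta\to 0$. By Conjecture~\ref{conj::fkperco_cvg}, the interfaces converge weakly, $\gamma^{\delta}\Rightarrow\gamma\sim\PP(\Omega;x,y)$ and $\eta^{\delta}\Rightarrow\eta\sim\PP(U;x,y)$, in the metric $d$ of~\eqref{eq::curve_metric}. Since $\{\text{curve}\subset\Omega^L\}$ is an open condition in this topology (a compact curve inside the open set $\Omega^L$ stays there under small perturbations), the portmanteau theorem applied to the right-hand side, a closed-set bound applied to the left-hand side, and an exhaustion by slightly smaller subdomains $\Omega^{L\prime}\uparrow\Omega^L$ together upgrade the discrete inequality to $\PP[\eta\subset\Omega^L]\ge\PP[\gamma\subset\Omega^L]$. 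As $\PP[\gamma\subset\Omega^L]>0$ depends only on $(\Omega,\Omega^L)$ and not on $U$, taking $\theta:=\PP[\gamma\subset\Omega^L]$ recovers the conclusion of Lemma~\ref{lem::sle_positivechance_stay}. The main obstacle is the third step: one must verify carefully that $E$ is genuinely decreasing --- using the description of the interface as the boundary of the wired cluster together with the fact that the free arc prevents that cluster from escaping on the far side --- and one must handle the open/closed subtleties of the scaling limit with care, since for $\kappa>4$ the limiting curves may touch the boundary.
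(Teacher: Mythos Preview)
Your proposal is correct and follows essentially the same route as the paper. The paper's own proof is a single sentence --- ``This immediately follows by combining~\eqref{eqn::rcm_boundary_comparison} with Conjecture~\ref{conj::fkperco_cvg}'' --- and your argument simply unpacks this: the comparison~\eqref{eqn::rcm_boundary_comparison} is exactly the FKG monotonicity in boundary conditions that you invoke via the decreasing events $E$ and $D$, and the passage to the limit via Conjecture~\ref{conj::fkperco_cvg} is what you spell out with the portmanteau/exhaustion argument. Your added care about the open/closed subtleties of $\{\text{curve}\subset\Omega^L\}$ in the scaling limit is appropriate, though the paper treats this as routine.
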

\begin{proof}
  This immediately follows by combining the domain Markov property with
  the comparison~\eqref{eqn::rcm_boundary_comparison} of boundary conditions
  with Conjecture~\ref{conj::fkperco_cvg}.
\end{proof}

We remark that Lemma~\ref{lem::sle_positivechance_stay_beyond4} concerns the chordal $\SLE_{\kappa}$ with $\kappa\in (4,6]$,
and its statement has nothing to do with discrete models.
However, we do not have 
a proof for this lemma without using Conjecture~\ref{conj::fkperco_cvg}.

\begin{proposition}\label{prop::two_sle_beyond4}
  Suppose Conjecture~\ref{conj::fkperco_cvg} holds for some $q\in [1,4)$ and let $\kappa\in (4,6]$ be the value related to $q$ via~\eqref{eqn::relation_q_kappa}.
  Then, for each quad $(\Omega; x_1, \ldots,  x_{4})$ and for each link pattern $\alpha \in \LP_2$, there exists a unique global $2$-$\SLE_{\kappa}$ associated to~$\alpha$.
\end{proposition}
\begin{proof}
  As in Section~\ref{subsec::uniqueness_pair}, without loss of generality, we assume that $\alpha = \{\link{1}{4}, \link{2}{3}\}$.
  Then, to prove the assertion, we argue as in the proof of Proposition~\ref{prop::slepair_unique},
  with $(\Omega; x_1, \ldots,  x_{4}) = (\Omega; x^L, x^R, y^R, y^L)$. Taking
  $\Omega=[0,\ell]\times[0,1]$ and $x^L=(0,0)$, $x^R=(\ell, 0)$, $y^R=(\ell, 1)$, $y^L=(0,1)$, we
  define a Markov chain on pairs $(\eta^L,\eta^R)$ of curves by sampling from the conditional laws:
  given $(\eta^L_n, \eta^R_n)$, we pick $i\in\{L, R\}$ uniformly and resample $\eta^i_{n+1}$ according to the conditional law given the other curve.
  However, in the current situation, we have $\kappa\in (4,6]$, so the configuration sampled according to this rule may no longer stay in the space
  $X_0(\Omega; x^L, y^L, x^R, y^L)$.
  In this case, when resampling according to the conditional law, we sample the curves in each connected component and concatenate the pieces of curves together
  --- see the more detailed description beneath Equation~\eqref{eq::dim_of_A1}.
  Fortunately, this issue turns out to be irrelevant in the end, as we will show that,
  for any initial configuration $(\eta^L_0, \eta^R_0)\in X_0(\Omega; x^L, x^R, y^R, y^L)$,
  the corresponding Markov chain $(\eta^L_n, \eta^R_n)$ will eventually stay in the space $X_0(\Omega; x^L, y^L, x^R, y^L)$:
  \begin{align}\label{eqn::slepair_uniqueness_beyond4_aux}
    \PP\left[\exists \; n_0 < \infty \text{ such that }(\eta^L_n, \eta^R_n)\in X_0(\Omega; x^L, y^L, x^R, y^L) \text{ for all } n\ge n_0 \right]=1.
  \end{align}
  Once~\eqref{eqn::slepair_uniqueness_beyond4_aux} has been established, the existence and uniqueness of the global $2$-$\SLE_{\kappa}$ follows by
  repeating the proof Proposition~\ref{prop::slepair_unique}, with Lemma~\ref{lem::sle_positivechance_stay} replaced by Lemma~\ref{lem::sle_positivechance_stay_beyond4}.
  Hence, it remains to prove~\eqref{eqn::slepair_uniqueness_beyond4_aux}.

  In the Markov chain $(\eta^L_n, \eta^R_n)$, we want to record the times when $L$ and $R$ are picked. Let $\tau_0^L=\tau_0^R=0$,
  and for $n \geq 1$, let $\tau_n^R$ (resp.~$\tau_n^L$) be the first time after $\tau_{n-1}^L$ (resp.~$\tau_n^R$) that $R$ (resp.~$L$) is picked.
  Let
  \begin{align*}
    n_{\kappa} = \bigg\lceil \frac{\kappa}{8-\kappa} \bigg\rceil +1.
  \end{align*}
  To prove~\eqref{eqn::slepair_uniqueness_beyond4_aux}, it suffices to show that
  $\eta^R_n\cap (y^L \, x^L)=\emptyset$ for all $n\ge \tau_{n_{\kappa}}^R$, because a similar property for $\eta^L_n$ follows by symmetry
  (note also that $\tau_{n}^L\ge \tau_n^R$).
  For this purpose, we let $\gamma^R$ be the $\SLE_{\kappa}$ in $\Omega$ connecting $x^R$ and $y^R$.
  We will use the following two essential properties of $\gamma^R$:
  \begin{enumerate}
    \item \label{item1} By the duality property of the $\SLE_{\kappa}$ (see, e.g.,~\cite{DubedatSLEDuality} or~\cite[Theorem 1.4]{MillerSheffieldIG1}),
          we know that the left boundary of $\gamma^R$ has the law of the $\SLE_{16/\kappa}(16/\kappa-4; 8/\kappa-2)$
          with two force points next to the starting point. Therefore, the  left boundary of $\gamma^R$ does not hit $(x^Ry^R)$.

    \item \label{item2} The curve  $\gamma^R$ hits $(y^L \, x^L)$ with positive probability, and
          using~\cite{AlbertsKozdronIntersectionProbaSLEBoundary} and Lemma~\ref{lem::appendix} from appendix~\ref{sec::appendix_fractal},
          we see that, almost surely on the event $\{\gamma^R\cap(y^L \, x^L)\neq\emptyset\}$,
          the Hausdorff dimension of the intersection set satisfies
          \begin{align*}
            \dimH(\gamma^R\cap (y^L \, x^L))\le 1-\beta, \qquad \text{where }      \quad       \beta = \frac{8-\kappa}{\kappa} .
          \end{align*}
  \end{enumerate}
  Now, for $\tau_1^R\le n\le \tau_1^L-1$, the curve $\eta^R_n$ is an $\SLE_{\kappa}$ in a domain which is a subset of $\Omega$. By Lemma~\ref{lem::sle_positivechance_stay_beyond4}, we can couple $\eta^R_n$ and $\gamma^R$ 
  so that $\gamma^R$ stays to the left of $\eta^R_n$ almost surely. Thus, we have almost surely
  \begin{align*}
    \dimH(\eta^R_n\cap (y^L \, x^L))\le \dimH(\gamma^R\cap (y^L \, x^L))\le 1-\beta.
  \end{align*}
  In particular, for the last time before sampling the left curve, we have almost surely
  \begin{align}\label{eq::dim_of_A1}
    \dimH(A_1)\le 1-\beta \qquad \text{for} \qquad A_1=\eta^R_{\tau_1^L-1}\cap (y^L \, x^L).
  \end{align}

  Then, for $\tau^L_1\le n\le \tau_2^R-1$, we sample $\eta^L_n$ according to the conditional law given $\smash{\eta^R_{\tau^L_1-1}}$.
  However,  if $A_1\neq\emptyset$, then the domain $\smash{\Omega\setminus \eta^R_{\tau_1^L-1}}$ is not 
  connected.
  In this case, we sample the $\SLE_{\kappa}$ in those connected components of $\smash{\Omega\setminus \eta^R_{\tau_1^L-1}}$
  which have a part of $(y^L \, x^L)$ on the boundary and define $\eta^L_n$ to be the concatenation of these curves.
  We note that, by the above observation~\ref{item1}, the right boundary of $\eta^L_n$ only hits $(y^L \, x^L)$ in $A_1$.

  Next, for $\tau_2^R\le n\le \tau_2^L-1$, we sample $\eta_n^R$ according to the conditional law given $\smash{\eta^L_{\tau_2^R-1}}$.
  Again, the curve $\eta_n^R$ is an $\SLE_{\kappa}$ in a domain which is a subset of $\Omega$, and 
  we can couple 
  it with $\gamma^R$ in such a way that $\gamma^R$ stays to the left of $\eta^R_n$ almost surely.
  Thus, we have almost surely
  \begin{align*}
    \eta^R_n\cap (y^L \, x^L) \; \subset \;  \eta^R_n\cap A_1 \;  \subset \;  \gamma^R\cap A_1.
  \end{align*}
  Combining this with~\eqref{eq::dim_of_A1}, we see that, almost surely,
  \begin{align*}
    \dimH(\eta^R_n\cap (y^L \, x^L))\le \dimH(\gamma^R\cap A_1)\le (1-2\beta)^+.
  \end{align*}
  In particular, we can improve~\eqref{eq::dim_of_A1} to
  \begin{align*} 
    \dimH(A_2)\le (1-2\beta)^+ \qquad \text{for} \qquad A_2=\eta^R_{\tau_2^L-1}\cap (y^L \, x^L),
  \end{align*}
and iterating the same argument and combining with Lemma~\ref{lem::appendix}, we see that 
  \begin{align*}
    \eta_n^R\cap (y^L \, x^L)=\emptyset \qquad \text{for all } n\ge \tau_{n_{\kappa}}^R ,
  \end{align*}
  almost surely.  This concludes the proof.
\end{proof}

\begin{proposition}\label{prop::globalexistence_beyond4}
  Let $(\Omega; x_1, \ldots,  x_{2N})$ be a
  polygon with $N \geq 1$. For  any $\alpha \in \LP_N$, there exists a
  unique global $N$-$\SLE_{16/3}$ associated to~$\alpha$.
\end{proposition}

\begin{proof}
  The existence follows from the subsequential scaling limit in Lemma~\ref{lem::cvg_polygon_global_multiple_subtle}, so it remains to prove the uniqueness.
  We use induction on $N \geq 2$ and the same arguments as in the proof of Theorem~\ref{thm::global_unique}.
  First, the assertion holds for $N=2$ by 
  Proposition~\ref{prop::two_sle_beyond4}.
  Next, we let $N\ge 3$ and assume that for any $\beta\in\LP_{N-1}$, the global $(N-1)$-$\SLE_{\kappa}$ associated to $\beta$ is unique.
  Then, as in the proof of Theorem~\ref{thm::global_unique}, we take $\alpha\in\LP_N$ with
  $\link{1}{2}\in\alpha$ and $\link{r}{r+1}\in\alpha$ for some $r\in\{3, 4, \ldots, 2N-1\}$, and we
  let $(\eta_1, \ldots,\eta_N)\in X_0^{\alpha}(\Omega; x_1, \ldots, x_{2N})$ be a global $N$-$\SLE_{\kappa}$ associated to $\alpha$.
  We denote by $\eta^L$ (resp.~$\eta^R$) the curve in the collection $\{\eta_1, \ldots, \eta_N\}$ that connects $x_1$ and $x_2$ (resp.~$x_{r}$ and $x_{r+1}$).
  By the induction hypothesis, given $\eta^R$ (resp.~$\eta^L$), the conditional law of the rest of the curves is the unique global $(N-1)$-$\SLE_{\kappa}$
  associated to $\alpha\removeLink \link{r}{r+1}$ (resp.~$\alpha\removeLink \link{1}{2}$).
  This gives the conditional law of $\eta^L$ given $\eta^R$ and 
  vice versa.
  One can then use the argument from the proof of Proposition~\ref{prop::slepair_unique}, considering Markov chains sampling $\eta^L$ and $\eta^R$
  from their conditional laws --- one only has to replace Lemma~\ref{lem::sle_positivechance_stay} by Lemma~\ref{lem::sle_positivechance_stay_beyond4}
  and Lemma~\ref{lem::sle_positivechance_coincide} by the following Lemma~\ref{lem::positivechance_coincide_general_beyond4} for $N-1$.
\end{proof}

The next technical lemma can be thought of as an analogue of Lemma~\ref{lem::sle_positivechance_coincide}.
To state it, we fix $\alpha\in\LP_N$ such that $\link{1}{2}\in\alpha$ and let $(\Omega; x_1, \ldots, x_{2N})$ be a bounded polygon.
Also, if $(\eta_1, \ldots, \eta_N)$ is a family of random curves with the law of a global $N$-$\SLE_{\kappa}$ associated to $\alpha$,
and if $\eta := \eta_1$ is the curve connecting $x_1$ and $x_2$, then we denote by $\QQ_{\alpha}^{\link{1}{2}}(\Omega; x_1, \ldots, x_{2N})$ the law of~$\eta$.

\begin{lemma}\label{lem::positivechance_coincide_general_beyond4}
  Assume that there exists a unique global $N$-$\SLE_{16/3}$ associated to
  $\alpha$. Let $\Omega^L \subset U, \tilde{U} \subset \Omega$ be sub-polygons such that
  $\Omega^L$ and $\Omega$ agree in a neighborhood of the boundary arc $(x_1 \, x_2)$.
  Suppose that $\eta \sim \QQ_{\alpha}^{\link{1}{2}}(U; x_1, \ldots, x_{2N})$ and $\tilde{\eta} \sim \QQ_{\alpha}^{\link{1}{2}}(\tilde{U}; x_1, \ldots, x_{2N})$.
  Then, there exists a coupling $(\eta, \tilde{\eta})$ such that $\PP[\eta=\tilde{\eta}\subset\Omega^L]\ge\theta$,
  where the constant $\theta=\theta(\Omega, \Omega^L)>0$ is independent of $U$ and $\tilde{U}$.
\end{lemma}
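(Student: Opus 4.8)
The plan is to reproduce the two-step structure of the proofs of Lemmas~\ref{lem::sle_positivechance_coincide} and~\ref{lem::positivechance_coincide_generalize}: first produce a uniform lower bound on the probability that the curve stays in $\Omega^L$, then exhibit a coupling of the two marginals making them coincide inside $\Omega^L$ with uniformly positive probability. The essential new difficulty for $\kappa\in(4,6]$ is that we have neither the explicit construction $\QQ_\alpha^{\#}$ nor the multi-curve boundary perturbation (Proposition~\ref{prop::multiplesle_boundary_perturbation}) available, so I cannot compare the two full joint laws directly. Instead I would work entirely with the one-curve marginal $\QQ_\alpha^{\link{1}{2}}$ and reduce the coupling of $\eta$ and $\tilde\eta$ to the single-curve coinciding Lemma~\ref{lem::sle_positivechance_coincide}, which is already valid on the whole range $\kappa\in(0,8)$, by conditioning on the remaining $N-1$ curves.

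Concretely, I would realize $\eta=\eta_1$ inside a global $N$-$\SLE_\kappa$ $(\eta_1,\ldots,\eta_N)$ in $U$, and let $\hat U_1$ be the connected component of $U\setminus\bigcup_{j\ge2}\eta_j$ with $x_1,x_2$ on its boundary; by definition the conditional law of $\eta_1$ given the environment $(\eta_2,\ldots,\eta_N)$ is $\PP(\hat U_1;x_1,x_2)$, and likewise $\tilde\eta$ is, conditionally on its own environment, a chordal $\SLE_\kappa$ in $\hat{\tilde U}_1$. Fix once and for all a sub-polygon $V'$ with $\Omega^L\subset V'\subset V$, $\dist(\Omega^L,\Omega\setminus V')>0$, and such that $\bar{V'}$ is a neighborhood of $\Omega^L$ disjoint from $x_3,\ldots,x_{2N}$ (possible since $\dist(\Omega^L,\Omega\setminus V)>0$ and $\Omega^L$ sits against the arc $(x_2\,x_1)$). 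On the environment event $B_U:=\{\eta_j\cap\bar{V'}=\emptyset\ \forall j\ge2\}$ one has $V'\subset\hat U_1$, whence $\Omega^L\subset V'\subset\hat U_1\subset\Omega$ with $\dist(\Omega^L,\Omega\setminus V')>0$; Lemma~\ref{lem::sle_positivechance_coincide} then provides, conditionally on the two environments lying in $B_U$ and $B_{\tilde U}$, a coupling of $\eta_1$ and $\tilde\eta_1$ (still with their correct conditional marginals $\PP(\hat U_1;x_1,x_2)$ and $\PP(\hat{\tilde U}_1;x_1,x_2)$) with $\PP[\eta_1=\tilde\eta_1\subset\Omega^L\mid\text{environments}]\ge\theta(\Omega,\Omega^L,V')>0$, uniformly over all admissible $\hat U_1,\hat{\tilde U}_1$. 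Sampling the two environments independently, this yields $\PP[\eta=\tilde\eta\subset\Omega^L]\ge \PP[B_U]\,\PP[B_{\tilde U}]\,\theta(\Omega,\Omega^L,V')$, so it remains only to bound $\PP[B_U]$ and $\PP[B_{\tilde U}]$ from below uniformly in $U$ and $\tilde U$.

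The main obstacle is precisely this last estimate: that with uniformly positive probability the curves $\eta_2,\ldots,\eta_N$ avoid the fixed neighborhood $\bar{V'}$ of $\Omega^L$. I would obtain it by conditioning on $\eta_1$ and invoking the cascade property, which identifies the conditional law of $(\eta_2,\ldots,\eta_N)$ given $\eta_1$ as a global $(N-1)$-$\SLE_\kappa$ in $U\setminus\eta_1$. First, the extension of Lemma~\ref{lem::positivechance_stay_generalize} to $\kappa\in(4,6]$ established above (itself a consequence of Lemma~\ref{lem::sle_positivechance_stay_beyond4}) gives $\PP[\eta_1\subset\Omega^L]\ge p_0>0$ uniformly; on $\{\eta_1\subset\Omega^L\}$ the target $V\setminus\bar{V'}$ is a sub-polygon disjoint from $\eta_1$, hence a sub-polygon of $U\setminus\eta_1$, and a multi-curve staying estimate (the analogue of Lemma~\ref{lem::positivechance_stay_generalize_multiple} for $\kappa>4$, whose constant is uniform in the intermediate domain by the subdomain monotonicity underlying Lemma~\ref{lem::sle_positivechance_stay_beyond4}) should give $\PP[\eta_j\subset V\setminus\bar{V'}\ \forall j\ge2\mid\eta_1]\ge g_0>0$ with $g_0=g_0(\Omega,V,V')$ independent of $U$ and of $\eta_1$. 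Since $\{\eta_1\subset\Omega^L\}\cap\{\eta_j\subset V\setminus\bar{V'}\ \forall j\ge2\}\subset B_U$, this gives $\PP[B_U]\ge p_0 g_0$. The delicate point I expect to fight with is exactly the uniform multi-curve staying bound for $\kappa>4$: the naive iteration of the one-curve estimate hits a moving-target problem, since conditioning on one curve lying in the target shrinks the target available to the others. I would resolve it either by feeding in the discrete Russo--Seymour--Welsh input of Proposition~\ref{prop::rcm_rsw_rectangle} directly, or by carefully exploiting subdomain monotonicity against a fixed target disjoint from the already-sampled curves. Granting this bound, combining the three uniform constants yields the claim with $\theta(\Omega,\Omega^L,V)=(p_0 g_0)^2\,\theta(\Omega,\Omega^L,V')$, exactly in the spirit of the conclusion of Proposition~\ref{prop::slepair_unique}.
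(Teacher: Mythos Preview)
Your continuum strategy is coherent up to the step you yourself flag as ``delicate'': the uniform lower bound on $\PP[B_U]$, i.e., that the remaining $N-1$ curves of the global multiple $\SLE_\kappa$ in $U$ all avoid a fixed neighbourhood of $\Omega^L$, uniformly in $U$. This is precisely the analogue of Lemma~\ref{lem::positivechance_stay_generalize_multiple} for $\kappa\in(4,6]$, and as you note, the proof of that lemma relies on the explicit construction $\QQ_\alpha^{\#}$ and the multi-curve boundary perturbation of Proposition~\ref{prop::multiplesle_boundary_perturbation}, neither of which is available here. The one-curve input (the extension of Lemma~\ref{lem::positivechance_stay_generalize}) does not iterate cleanly: once you condition on $\eta_2$ lying in the target $V\setminus\bar{V'}$, the ambient domain for $\eta_3$ is cut by $\eta_2$, and the uniform bound $\theta(\Omega,\Omega^L)$ no longer applies because the target shrinks in a way that depends on $\eta_2$. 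You also implicitly assume that $V\setminus\bar{V'}$ is a sub-polygon for the remaining marked points, which requires topological care. So as written, the argument has a genuine gap.

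The paper sidesteps this entirely by going through the discrete model rather than working in the continuum. Since Conjecture~\ref{conj::fkperco_cvg} and uniqueness of the global $N$-$\SLE_\kappa$ are assumed, the discrete interfaces $\eta_1^\delta$ in $U^\delta$ and $\tilde U^\delta$ (conditioned on the connectivity $\alpha$) converge to $\QQ_\alpha^{\link{1}{2}}(U;\ldots)$ and $\QQ_\alpha^{\link{1}{2}}(\tilde U;\ldots)$. At the discrete level one then couples the two random-cluster configurations directly: on the event that $(\Omega^L)^\delta$ contains both an open crossing from $(x_2^\delta z_2^\delta)$ to $(y_1^\delta x_1^\delta)$ and a dual-open crossing from $(x_1^\delta z_1^\delta)$ to $(y_2^\delta x_2^\delta)$, the interface $\eta_1^\delta$ is trapped in $(\Omega^L)^\delta$ and determined by the configuration there, so the domain Markov property yields a coupling with $\eta_1^\delta=\tilde\eta_1^\delta\subset(\Omega^L)^\delta$. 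The probability of this crossing event is bounded below by RSW (Proposition~\ref{prop::rcm_rsw_rectangle}) and monotonicity~\eqref{eqn::rcm_boundary_comparison}, uniformly over $U^\delta$, $\tilde U^\delta$, and $\delta$; passing to the limit gives the claim. The point is that the discrete domain Markov property lets you couple the \emph{full} configurations (hence all $N$ interfaces simultaneously) in one shot, with no need for a continuum multi-curve staying estimate. Your instinct to ``feed in the discrete RSW input of Proposition~\ref{prop::rcm_rsw_rectangle} directly'' is exactly right --- the paper just does this from the outset rather than as a rescue for a continuum argument.
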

\begin{proof}
  Let $(\Omega^{\delta}; x_1^{\delta}, \ldots, x_{2N}^{\delta})$
  be discrete polygons converging to $(\Omega; x_1, \ldots, x_{2N})$ 
  in the Carath\'eodory sense,
  and denote by $U^{\delta}$, $\tilde{U}^{\delta}$, and $(\Omega^L)^{\delta}$ the corresponding approximations of $U$, $\tilde{U}$, and $\Omega^L$.
  Also, let $(\eta_1^{\delta}, \ldots, \eta_N^{\delta})$ (resp.~$(\tilde{\eta}_1^{\delta}, \ldots, \tilde{\eta}_N^{\delta})$)
  be the collection of interfaces in the critical random-cluster model on $U^{\delta}$ (resp.~$\tilde{U}^{\delta}$) with alternating boundary
  conditions~\eqref{eq::FK_alternating}, and let $\eta^{\delta} := \eta_1^{\delta}$ and $\tilde{\eta}^{\delta} := \tilde{\eta}^{\delta}_1$
  be the curves connecting $x_1^{\delta}$ and $x_2^{\delta}$.
  By the assumptions, we know that, as $\delta \to 0$,
  the law of $\eta^{\delta}$ (resp.~$\tilde{\eta}^{\delta}$) conditionally on $\{\conn^{\delta}=\alpha\}$
  (resp.~$\{\tilde{\conn}^{\delta}=\alpha\}$) converges to $\QQ_{\alpha}^{\link{1}{2}}(U; x_1, \ldots, x_{2N})$ (resp.~$\QQ_{\alpha}^{\link{1}{2}}(\tilde{U}; x_1, \ldots, x_{2N})$).
  Thus, it is sufficient to show
  that there exists a coupling $(\eta^{\delta}, \tilde{\eta}^{\delta})$ such that $\PP[\eta^{\delta} = \tilde{\eta}^{\delta} \subset (\Omega^L)^{\delta}] \ge \theta$ for $\delta$ small enough, where the constant $\theta=\theta(\Omega, \Omega^L)>0$ is independent of $U$ and $\tilde{U}$.

  Since $\Omega^L$ agrees with $\Omega$ in a neighborhood of $(x_1 \, x_2)$, we can find
  boundary points $y_1$ and $y_2$ such that $y_1, x_1, x_2, y_2$ lie
  in counterclockwise order along $\partial \Omega$ and $\Omega^L$ agrees with $\Omega$ in a neighborhood of $(y_1 \, y_2)$.
  Now, we have wired boundary conditions on the arc $(x_1^{\delta} \, x_2^{\delta})$
  and free boundary conditions on the arcs $(x_2^{\delta} \, x_3^{\delta})$ and $(x_{2N}^{\delta} \, x_1^{\delta})$.
  Define $\LC_*^{\delta}$
  to be the event that there exists a dual-open path in $(\Omega^L)^{\delta}$ from $(x_2^{\delta} \, y_2^{\delta})$ to $(y_1^{\delta} \, x_1^{\delta})$.
  Then, by the domain Markov property,
  there exists a coupling of $\eta^{\delta}$ and $\tilde{\eta}^{\delta}$ such that the probability of $\{\eta^{\delta}=\tilde{\eta}^{\delta}\subset(\Omega^L)^{\delta}\}$
  is bounded from below by the minimum of $\PP[\LC^{\delta}_*]$ and $\tilde{\PP}[\LC^{\delta}_*]$,
  where $\PP$ and $\tilde{\PP}$ denote the probability measures of the random-cluster models on $U^{\delta}$ and $\tilde{U}^{\delta}$
  with alternating boundary conditions~\eqref{eq::FK_alternating}.
  Furthermore, as a consequence of
  Proposition~\ref{prop::rcm_rsw_rectangle}, the domain Markov property,
  and the FKG inequality~\eqref{eqn::rcm_boundary_comparison}, we obtain
  $\PP[\LC^{\delta}_*] \ge \theta(\Omega, \Omega^L) > 0$
  (and likewise for $\tilde{U}$)\footnote{Note that, here we only need the RSW Proposition~\ref{prop::rcm_rsw_rectangle},
  because the lower bound $\theta$ is allowed to depend on the domains $\Omega, \Omega^L$.
  To derive $\theta(\Omega, \Omega^L)$ from Proposition~\ref{prop::rcm_rsw_rectangle},
  one can draw a zigzag path of rectangles so that the first one intersects the boundary arc $(y_1^{\delta} \, x_1^{\delta})$,
  the last one intersects the boundary arc $(x_2^{\delta} \, y_2^{\delta})$, and the middle ones are inside $(\Omega^L)^{\delta}$,
  and observe that dual crossings of all these rectangles give a dual crossing from $(y_1^{\delta} \, x_1^{\delta})$ to $(x_2^{\delta} \, y_2^{\delta})$.}.
  In particular, the lower bound $\theta(\Omega, \Omega^L)$ is uniform over $U$ (resp.~$\tilde{U}$) and $\delta$.
  By the convergence of $\eta^{\delta}$ and $\tilde{\eta}^{\delta}$, we obtain a coupling of $\eta$ and $\tilde{\eta}$ such that the probability
  of $\{\eta = \tilde{\eta} \subset \Omega^L\}$ is bounded from below by $\theta(\Omega,\Omega^L)$. 
\end{proof}

By the above, we have now completed the proof of Proposition~\ref{prop::fkising_alternating_cvg} (with $q=2$ and $\kappa = 16/3$).
We summarize the key ingredients in the proof in the following remark.

\begin{remark} \label{rem::summary}
  The proof of Proposition~\ref{prop::fkising_alternating_cvg} consists of  Lemmas~\ref{lem::tightness}~--~\ref{lem::positivechance_coincide_general_beyond4}
  and Propositions~\ref{prop::two_sle_beyond4} and~\ref{prop::globalexistence_beyond4}.
  \begin{itemize}
    \item Lemmas~\ref{lem::tightness},~\ref{lem::cvg_domains_Dobrushin}, and~\ref{lem::cvg_polygon_global_multiple_subtle}
          require the RSW estimate from Proposition~\ref{prop::rcm_rsw_rectangle}.

    \item 
          Lemma~\ref{lem::sle_positivechance_stay_beyond4}
          requires the convergence of a single interface, given by Conjecture~\ref{conj::fkperco_cvg}.

    \item The proof of  
            {Proposition~\ref{prop::two_sle_beyond4}}
          uses Lemma~\ref{lem::sle_positivechance_stay_beyond4}. 
          Assuming Lemma~\ref{lem::sle_positivechance_stay_beyond4}, this works for all $\kappa\in (4,8)$.

    \item Note also that the proof of Proposition~\ref{prop::two_sle_beyond4} uses
          the duality of the $\SLE_\kappa$, which is known for all $\kappa\in (4,8)$~\textnormal{\cite{DubedatSLEDuality,MillerSheffieldIG1}}. 

    \item The proofs of  
          Proposition~\ref{prop::globalexistence_beyond4} and Lemma~\ref{lem::positivechance_coincide_general_beyond4}
          use the convergence of the multiple FK-Ising interfaces;
          thus they also require Lemmas~\ref{lem::tightness}~--~\ref{lem::sle_positivechance_stay_beyond4} and Proposition~\ref{prop::two_sle_beyond4} as an input.
  \end{itemize}
  Overall, the proofs of these results require the RSW estimate from Proposition~\ref{prop::rcm_rsw_rectangle} 
  and the convergence a single interface (Conjecture~\ref{conj::fkperco_cvg}).
  Therefore, knowing Conjecture~\ref{conj::fkperco_cvg}, the analogous
  conclusions to Proposition~\ref{prop::fkising_alternating_cvg} would extend
  to other values of $\kappa$.
\end{remark}

\subsection{The Ising Model}
\label{subsec::ising_multiple_cvg}

Let $\graph$ be a finite subgraph of $\Z^2$. The Ising model on $\graph$ with free boundary condition is a random assignment
$\sigma \in \{\ominus, \oplus\}^{V(\graph)}$ of spins $\sigma_v \in \{\ominus, \oplus\}$, where $\sigma_v$ denotes the spin at the vertex $v \in V(\graph)$.
The Hamiltonian is defined by
\begin{align*}
  H^{\free}_{\graph}(\sigma) = - \sum_{v \sim w}\sigma_v \sigma_w .
\end{align*}
The probability measure of the Ising model is given by the Boltzmann measure with Hamiltonian $H^{\free}_{\graph}$ and inverse-temperature $\beta>0$:
\begin{align*}
  \mu^{\free}_{\beta,\graph}[\sigma]
  = \frac{\exp(-\beta H^{\free}_{\graph}(\sigma))}{Z^{\free}_{\beta, \graph}}, \qquad
  \text{where } \quad Z^{\free}_{\beta, \graph}=\sum_{\sigma}\exp(-\beta H^{\free}_{\graph}(\sigma)) .
\end{align*}
Also, for $\tau\in \{\ominus, \oplus\}^{\Z^2}$, we define the Ising model with boundary condition $\tau$ via the Hamiltonian
\begin{align*}
  H^{\tau}_{\graph}(\sigma) = -\!\!\!\!\!\!\!\! \sum_{\substack{v \sim w, \\ \edge{v}{w} \cap \graph \neq \emptyset}} \sigma_v \sigma_w, \qquad
  \text{where } \quad \sigma_v = \tau_v  \text{ for all } v \not\in \graph .
\end{align*}
In particular, if $(\graph; v, w)$ is a discrete Dobrushin domain, we may consider the Ising model with the following \textit{Dobrushin boundary conditions}
(domain-wall boundary conditions): we set
$\oplus$ along the arc $(v \, w)$, and $\ominus$ along the complementary arc $(w \, v)$. More generally, we will consider the alternating boundary conditions~\eqref{eqn::ising_bc_alternating},
where $\oplus$ and $\ominus$ alternate along the boundary as in Figure~\ref{fig::Ising}.

\smallbreak

As in the case of the random-cluster model, we have the    following  useful   domain    Markov    property.
Let $\graph  \subset  \graph'$ be  two  finite  subgraphs of  $\Z^2$.  Fix
$\tau  \in \{\ominus,  \oplus\}^{\Z^2}$ and  $\beta>0$. Let  $X$ be  a
random variable, which is measurable with respect to the status of the
vertices in the smaller graph $\graph$. Then, we have
\begin{align*}
  \mu^{\tau}_{\beta, \graph'}  \big[X \; | \; \sigma_v = \tau_v  \text{ for all } v \in \graph' \setminus \graph \big] = \mu^{\tau}_{\beta, \graph}[X] .
\end{align*}

\smallbreak

The planar Ising model exhibits an order-disorder phase transition at a certain critical temperature:
above this temperature, the configurations are disordered and below it, one observes large clusters of equal spins.
At criticality, the configurations show self-similar behavior, and indeed, the critical planar Ising model is conformally invariant in the scaling
limit~\cite{SmirnovConformalInvariance, SmirnovConformalInvarianceAnnals, ChelkakSmirnovIsing, HonglerSmirnovIsingEnergy, ChelkakHonglerLzyurovConformalInvarianceCorrelationIsing, CDCHKSConvergenceIsingSLE}.
On the square lattice, the critical value of $\beta$ is 
\[\beta_c=\frac{1}{2}\log(1+\sqrt{2}).\]

In Proposition~\ref{prop::ising_multiple_cvg}, we consider the scaling limit of Ising interfaces at criticality.
Let $(\Omega_*^{\delta}; x_*^{\delta}, y_*^{\delta})$ be discrete Dobrushin domains, $\delta > 0$,
and consider the critical Ising model on the duals $(\Omega_*^{\delta}; x_*^{\delta}, y_*^{\delta})$ with Dobrushin boundary conditions.
Let $x^{\delta}_{\diamond}$ and $y^{\delta}_{\diamond}$ be vertices on
the   medial    lattice   $\Omega_{\diamond}^{\delta}$    nearest   to
$x_*^{\delta}$  and   $y_*^{\delta}$.  Then,   we  define   the  Ising
\textit{interface} as follows. It starts from $x^{\delta}_{\diamond}$,
traverses on the primal lattice  $\Omega^{\delta}$, and turns at every
vertex of  $\Omega^{\delta}$ in  such a  way that  it always  has
dual-vertices with  spin $\oplus$  on its  left and  spin $\ominus$  on its
right. If there is an indetermination  when arriving at a vertex (this
may  happen  on   the  square  lattice),  it  turns   left.  See  also
Figure~\ref{fig::ising_multiple} for an illustration.
This interface converges weakly as $\delta \to 0$
to 
the chordal $\SLE_\kappa$ with $\kappa = 3$
(in the topology of Section~\ref{subsec::intro_Ising}).

\begin{figure} [h]
  \begin{center}
    \includegraphics[width=.6\textwidth]{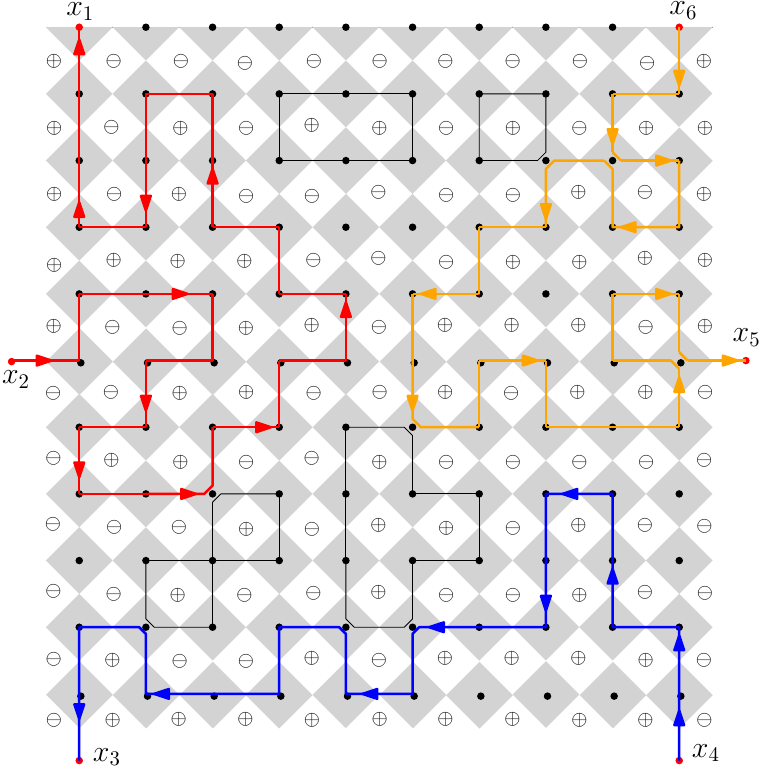}
  \end{center}
  \caption{A spin configuration of the Ising model on a polygon with six marked points $x_1, \ldots, x_6$ on the boundary,
    with alternating boundary conditions. There are three interfaces starting from $x_2$, $x_4$, and $x_6$, illustrated in red, blue, and orange, respectively.}
  \label{fig::ising_multiple}
\end{figure}

%

\begin{theorem}\label{thm::ising_cvg_minusplus}
  \textnormal{\cite[Theorem~1]{CDCHKSConvergenceIsingSLE}}
  Let $(\Omega_* ^{\delta}; x_*^{\delta}, y_*^{\delta})$ be a sequence
  of  discrete  Dobrushin domains  converging  to  a Dobrushin  domain
  $(\Omega;   x,   y)$  in   the   Carath\'eodory   sense.  Then,   as
  $\delta\to  0$,  the  interface  of  the  critical  Ising  model  on
  $(\Omega_*^{\delta},  x_*^{\delta},  y_*^{\delta})$  with  Dobrushin
  boundary conditions  converges weakly  to the chordal  $\SLE_{\kappa}$ in
  $\Omega$ connecting $x$ and $y$ with $\kappa=3$.
\end{theorem}

Using this result, we will prove that multiple interfaces also converge in the scaling limit to global multiple $\SLE_3$ curves.
Abusing and lightening notation, let us write $\Omega^{\delta}$ for $\Omega^{\delta}, (\Omega^{\diamond})^{\delta}$, or $(\Omega^*)^{\delta}$, and
$x^{\delta}$ for $x^{\delta}, (x^{\diamond})^{\delta}$, or $(x^*)^{\delta}$.
Let the polygons $(\Omega^{\delta}; x_1^{\delta}, \ldots, x_{2N}^{\delta})$
converge to $(\Omega; x_1, \ldots, x_{2N})$ as $\delta\to 0$ in the Carath\'eodory sense.
Consider the critical Ising model on $\Omega^{\delta}$ with alternating boundary conditions~\eqref{eqn::ising_bc_alternating}.
For $j\in\{1,\ldots, N\}$, let $\eta_j^{\delta}$ be the interface starting from $x_{2j}^{\delta}$ that separates $\oplus$ from $\ominus$. Then, the collection of interfaces
$(\eta_1^{\delta}, \ldots, \eta_N^{\delta})$ connects the boundary points $x_1^{\delta},\ldots, x_{2N}^{\delta}$ forming a planar link pattern $\conn^{\delta}\in \LP_N$.
Proposition~\ref{prop::ising_multiple_cvg} asserts that
conditionally on the event $\{\conn^{\delta}=\alpha\}$,
the law of the collection $(\eta^{\delta}_1, \ldots, \eta_N^{\delta})$ converges weakly as $\delta \to 0$ to a global $N$-$\SLE_{3}$ associated to $\alpha$.
The proof of this is very similar to that for the FK-Ising model (Proposition~\ref{prop::fkising_alternating_cvg})
--- we summarize it below.

\begin{proof}[Proof of Proposition~\ref{prop::ising_multiple_cvg}]
  The uniqueness of the limit follows from Theorem~\ref{thm::global_unique} (the global $N$-$\SLE_3$ is unique).
  For the subsequential convergence, we follow the same lines as in the proof of Proposition~\ref{prop::fkising_alternating_cvg}.
  Recall the summary of its proof from Remark~\ref{rem::summary}.
  First, for Lemmas~\ref{lem::tightness} and~\ref{lem::cvg_domains_Dobrushin}, we need a RSW type estimate for the critical Ising model.
  This can be obtained from Proposition~\ref{prop::rcm_rsw_rectangle} via the so-called Edwards-Sokal coupling,
  as explained in~\cite[Remark~4]{CDCHKSConvergenceIsingSLE}.
  Then, the proof of Lemma~\ref{lem::cvg_polygon_global_multiple_subtle} holds for the critical Ising model and $\kappa = 3$.
  Therefore, we conclude that for any convergent subsequence of $(\eta_1^{\delta_n}, \ldots, \eta_N^{\delta_n})_{\delta_n > 0}$,
  the limit must be a global multiple $N$-$\SLE_3$.
  Since the global $N$-$\SLE_3$ is unique due to Theorem~\ref{thm::global_unique}, we readily establish the convergence of
  the whole sequence to this global $N$-$\SLE_3$.
Finally,   the   asserted  marginal  law  of   $\eta_j$  follows  from  Lemma~\ref{lem::loewnerchain_purepartition}.
\end{proof}

\appendix

\section{Intersection of Two Fractals}
\label{sec::appendix_fractal}
For use in Section~\ref{sec::ising_fkperco}, we record in this appendix some properties of random subsets of the boundary of 
the unit disc $\U = \{ z \in \C \colon |z| \leq 1 \}$. 
In spite of stating the results for $\U$, we may as well
apply the following lemma for the domain $\Omega=[0,\ell]\times[0,1]$ as we do in Section~\ref{sec::ising_fkperco}, by conformal invariance of the $\SLE_\kappa$.

\begin{lemma}\label{lem::appendix}
Suppose $\LE$ is a random subset of $\partial\U$ satisfying the following: there are constants $C>0$ and $\beta\in (0,1)$ 
such that, for any interval $I \subset \partial\U$, we have
\begin{align*} 
\PP[\LE\cap I\neq\emptyset]\le C|I|^{\beta}.
\end{align*}
Then, for any subset $A\subset\partial\U$, the following hold.
\begin{enumerate}
\item \label{item::dim_empty} 
If $\dimH(A)<\beta$, then 
\begin{align*} 
A\cap\LE=\emptyset , \qquad \text{almost surely.}
\end{align*}
\item \label{item::dim_upper} 
If $\dimH(A)\ge\beta$, then 
\begin{align*} 
\dimH(A\cap\LE)\le \dimH(A)-\beta , \qquad \text{almost surely.} 
\end{align*}
\end{enumerate}
\end{lemma}

This lemma is a part of~\cite[Lemma~2.3]{RohdeWuFractals}, where the authors give a more complete description of the set $A\cap \LE$. 
The above cases are sufficient to our purposes in the proof of 
Proposition~\ref{prop::two_sle_beyond4}, 
so we include their proofs in this appendix.

\begin{proof}[Proof of item~\ref{item::dim_empty}]
Since $\beta>\dimH(A)$, for any $\varepsilon>0$, there exists a cover $\cup_i I_i$ of $A$ such that $\sum_i |I_i|^{\beta}\le\varepsilon$. Therefore,
\begin{align*}
\PP[A\cap\LE \neq \emptyset] \; \le \; \sum_i \PP[I_i\cap\LE\neq\emptyset] \; \le \; C\sum_i |I_i|^{\beta} \; \le \; C\varepsilon,
\end{align*}
almost surely. Letting $\varepsilon\to 0$, we see that $\PP[A\cap\LE\neq\emptyset]=0$.
\end{proof}

\begin{proof}[Proof of item~\ref{item::dim_upper}]
For any $\gamma>\dimH(A)-\beta$, there exists a cover $\cup_i I_i$ of $A$ such that $\sum_i|I_i|^{\beta+\gamma}<\infty$.  Hence, we have
\begin{align*}
\E \Big[\sum_i |I_i|^{\gamma} \one_{\{I_i\cap\LE\neq\emptyset\}} \Big] 
\, =  \, \sum_i |I_i|^{\gamma} \,\PP[I_i\cap\LE\neq\emptyset] \; \le \;  C\sum_i |I_i|^{\beta+\gamma}
\; < \; \infty,
\end{align*}
almost surely. Thus, the collection
$\{I_i \colon I_i\cap\LE\neq\emptyset \}$
is a cover of $A\cap\LE$ and $\sum_i |I_i|^{\gamma}1_{\{I_i\cap\LE\neq\emptyset\}}<\infty$, almost surely. Therefore, we have 
\[\dimH(A\cap\LE)\le \gamma, \qquad \text{almost surely.}\]
As this holds for any $\gamma>\dimH(A)-\beta$, we have $\dimH(A\cap\LE)\le \dimH(A)-\beta$, almost surely.
\end{proof}

\bigskip

{\small
\newcommand{\etalchar}[1]{$^{#1}$}

}

%
%
%
%
%

\end{document}